\newtheorem{thm}{Theorem}[section]
\newtheorem{cor}[thm]{Corollary}
\newtheorem{lem}[thm]{Lemma}
\newtheorem{prop}[thm]{Proposition}
\theoremstyle{definition}
\newtheorem{rem}[thm]{Remark}
\newtheorem*{rem*}{Remark}
\newtheorem{lemma-A}[theorem]{Lemma}
\newtheorem{corollary-A}[theorem]{Corollary}
\newtheorem{example-A}[theorem]{Example}
\newtheorem{definition-A}[theorem]{Definition}
\titleformat{\section}{\normalfont\bfseries\centering}{\thesection.}{.25em}{}
\titleformat{\subsection}{\normalfont\bfseries}{\thesubsection.}{.25em}{}
\titleformat{\subsubsection}{\normalfont\it}{\thesubsubsection.}{.25em}{}
\titlespacing{\section}{0pt}{*5}{*1.5}
\titlespacing{\subsection}{0pt}{*4}{*0.5}
\titlespacing{\subsubsection}{0pt}{*4}{*0.5}
\numberwithin{equation}{section}
\renewcommand{\emptyset}{\varnothing}
\newcommand{\one}{\mathbbm{1}}
\newcommand{\braces}[1]{{\rm (}#1{\rm )}}
\newcommand{\rmref}[1]{{\rm\ref{#1}}}
\newcommand{\Id}{\operatorname{Id}}
\newcommand{\LowerBound}{C_1}
\newcommand{\UpperBound}{C_2}
\newcommand{\R}{\ensuremath{\mathbb R}}    
\renewcommand{\C}{\ensuremath{\mathbb C}}    
\newcommand{\Q}{\ensuremath{\mathbb Q}}    
\newcommand{\N}{\ensuremath{\mathbb N}}    
\newcommand{\Z}{\ensuremath{\mathbb Z}}    
\newcommand{\<}{\langle}
\renewcommand{\>}{\rangle}
\newcommand{\calF}{\mathcal F}         
\newcommand{\calG}{\mathcal G}         
\newcommand{\calH}{\mathcal H}
\newcommand{\calK}{\mathcal K}         
\newcommand{\calL}{\mathcal L}
\newcommand{\la}{\lambda}
\newcommand{\veps}{\varepsilon}
\newcommand{\vphi}{\varphi}
\newcommand{\smallvek}[2]{\left(\begin{smallmatrix}#1\\#2\end{smallmatrix}\right)}
\renewcommand{\Im}{\operatorname{Im}}
\renewcommand{\Re}{\operatorname{Re}}
\newcommand{\linspan}{\operatorname{span}}
\newcommand{\dom}{\operatorname{dom}}
\newcommand{\ran}{\operatorname{ran}}
\newcommand{\supp}{\operatorname{supp}}
\newcommand{\id}{\mathrm{id}}
\newcommand{\diag}{\operatorname{diag}}
\newcommand{\firstN}[1]{\{1,\dots,#1\}}
\newcommand{\gramian}{{\mathbf G}}
\newcommand{\frameOP}{{\mathbf S}}
\newcommand{\synthOP}{{\mathbf T}}
\newcommand{\boldX}{{\mathbf X}}
\newcommand{\GL}{\operatorname{GL}}
\newcommand{\SL}{\operatorname{SL}}
\newcommand{\eps}{\varepsilon}
\newcommand{\LTwoIndex}{L^2}
\newcommand{\vectorizeOperator}{\mathcal{V}}
\newcommand{\specialFourier}{\mathcal{U}}
\newcommand{\Fourier}{\mathcal{F}}
\newcommand{\indicator}{{\mathds{1}}}
\newcommand{\pseudo}[1]{#1^\dagger}
\newcommand{\Llra}{\Longleftrightarrow}
\newcommand{\ol}{\overline}
\newcommand{\wt}{\widetilde}
\newcommand{\wh}{\widehat}
\newcommand{\bP}{\mathbb P}
\newcommand{\dist}{\operatorname{dist}}
\newcommand{\HH}{\mathbb H}
\newcommand{\sinc}{\operatorname{sinc}}
\newcommand{\essinf}{\operatorname*{essinf}}
\renewcommand{\o}{\omega}
\newcommand{\bS}{{\mathbf S}}
\newcommand{\bG}{{\mathbf G}}
\newcommand{\Lebesgue}{\boldsymbol{\lambda}}
\begin{document}
\title[A quantitative subspace Balian-Low theorem]{A quantitative subspace Balian-Low theorem}

\author[A. Caragea]{Andrei Caragea}
\address{{\bf A.~Caragea:} KU Eichst\"att-Ingolstadt,
                           Mathematisch-Geographische Fakult\"at,
                           Ostenstra\ss e 26,
                           Kollegiengeb\"aude I Bau B,
                           85072 Eichst\"att,
                           Germany}
\email{andrei.caragea@gmail.com}
\urladdr{http://www.ku.de/?acaragea}

\author[D.G. Lee]{Dae Gwan Lee}
\address{{\bf D.G.~Lee:} KU Eichst\"att-Ingolstadt,
                         Mathematisch-Geographische Fakult\"at,
                         Ostenstra\ss e 26,
                         Kollegiengeb\"aude I Bau B,
                         85072 Eichst\"att,
                         Germany}
\email{daegwans@gmail.com}
\urladdr{http://www.ku.de/?lee}

\author[F. Philipp]{Friedrich Philipp}
\address{{\bf F.~Philipp:} Technische Universit\"at Ilmenau, Institute for Mathematics,
Weimarer Stra\-\ss e~25, D-98693 Ilmenau, Germany}
\email{friedrich.philipp@tu-ilmenau.de}

\author[F. Voigtlaender]{Felix Voigtlaender}
\address{{\bf F.~Voigtlaender:}
KU Eichst\"att-Ingolstadt,
Mathematisch-Geographische Fakult\"at,
Ostenstra\ss e 26,
Kollegiengeb\"aude I Bau B,
85072 Eichst\"att,
Germany}
\email{felix@voigtlaender.xyz}
\urladdr{http://www.ku.de/?voigtlaender}
\urladdr{http://voigtlaender.xyz}

\begin{abstract}
Let $\calG \subset L^2(\R)$ be the subspace spanned by a Gabor Riesz sequence
$(g,\Lambda)$ with $g \in L^2(\R)$ and a lattice $\Lambda \subset \R^2$ of rational density.
It was shown recently that if $g$ is well-localized both in time and frequency,
then $\calG$ cannot contain any time-frequency shift $\pi(z) g$ of $g$
with $z \in \R^2 \setminus \Lambda$.
In this paper, we improve the result to the quantitative statement that the $L^2$-distance
of $\pi(z)g$ to the space $\calG$ is equivalent to the Euclidean distance
of $z$ to the lattice $\Lambda$, in the sense that the ratio between those two distances
is uniformly bounded above and below by positive constants.
On the way, we prove several results of independent interest,
one of them being closely related to the so-called weak Balian-Low theorem for subspaces.
\end{abstract}

\subjclass[2010]{Primary: 42C15. Secondary: 42C30, 42C40}

\keywords{Balian-Low Theorem;
Weak subspace Balian-Low Theorem;
Gabor systems;
Time frequency shift invariance;
Zak transform}

\maketitle
\thispagestyle{empty}

\section{Introduction}

The Balian-Low theorem is a well known and fundamental result in time-frequency analysis,
which asserts that a Gabor system cannot be a Riesz basis for $L^2(\R)$
if its generating window is well localized both in time and frequency.
More precisely, it states the following:

\begin{thm}[Balian-Low Theorem]
Let $g\in L^2(\R)$ and let $\Lambda\subset\R^2$ be a lattice
such that the Gabor system $\{e^{2\pi ibx}g(x-a) : (a,b)\in\Lambda\}$ is a Riesz basis for $L^2(\R)$
(and therefore $\Lambda$ is of density $1$).
Then
\begin{equation}\label{e:up}
  \left(\int x^2 \, |g(x)|^2\,dx\right)
  \left(\int \omega^2 \, |\widehat g(\omega)|^2\,d\omega\right)
  = \infty.
\end{equation}
\end{thm}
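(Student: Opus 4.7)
\medskip

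\noindent\textbf{Proof proposal.}
The plan is to prove this classical Balian--Low theorem by contradiction via the Zak transform, which is the canonical tool for such results and fits naturally with the rest of the paper.

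First I would reduce to the case of the standard integer lattice $\Lambda = \Z^2$. Since $\Lambda$ has density $1$, there exists $M \in \SL(2,\R)$ with $\Lambda = M \Z^2$; the associated metaplectic operator $\mu(M)$ on $L^2(\R)$ is unitary and maps the Gabor Riesz basis $(g,\Lambda)$ to a Gabor Riesz basis $(\wt g, \Z^2)$, and it carries the joint time--frequency localization condition \eqref{e:up} to an equivalent one (with finite factors depending only on $M$). Hence I may assume $\Lambda = \Z^2$.

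Next I would invoke the classical Zak-transform characterization: with
\[
  Zg(x,\omega) := \sum_{k \in \Z} g(x-k)\, e^{2\pi i k \omega},
\]
the Gabor system $(g,\Z^2)$ is a Riesz basis of $L^2(\R)$ if and only if there are constants $0 < c \le C < \infty$ with $c \le |Zg|^2 \le C$ a.e.\ on the unit square $[0,1]^2$. Assume for contradiction that both integrals in \eqref{e:up} are finite, so that $x\,g(x)$ and $\omega\, \wh g(\omega)$ lie in $L^2(\R)$. Via the intertwining identity $Z(x g) = x\, Zg - \tfrac{1}{2\pi i}\, \partial_\omega Zg$, together with its counterpart under the Fourier transform, these hypotheses translate directly into $\partial_\omega Zg,\, \partial_x Zg \in L^2([0,1]^2)$; in other words, $Zg \in H^1([0,1]^2)$.

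The core of the argument---and the step I expect to be the main obstacle---is a topological contradiction. The Zak transform is quasi-periodic, satisfying $Zg(x+1,\omega) = e^{2\pi i \omega}\, Zg(x,\omega)$ and $Zg(x,\omega+1) = Zg(x,\omega)$, so it is naturally viewed as a section of a nontrivial complex line bundle over the torus $\R^2 / \Z^2$. Since $|Zg|$ is bounded below by $\sqrt c > 0$, this section is nowhere vanishing, which contradicts the nontriviality of the bundle (Chern number $1$). Concretely, one extracts a continuous phase $\vphi$ with $Zg = |Zg|\, e^{i \vphi}$ and integrates $d\vphi$ around the boundary of $[0,1]^2$: quasi-periodicity contributes a total phase change of $2\pi$, while single-valuedness of $\vphi$ forces the same integral to vanish.

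The subtle point is that $H^1([0,1]^2)$ does not embed into $C([0,1]^2)$ in dimension two, so the continuous phase cannot be produced directly. I would handle this by convolving $Zg$ (extended quasi-periodically to $\R^2$) with a smooth approximate identity: the resulting functions $Zg_\eps$ are smooth, satisfy an approximate quasi-periodicity sufficient for the winding computation, and---thanks to the essential lower bound---are uniformly bounded away from zero for $\eps$ small. The classical winding argument then applies to $Zg_\eps$ and yields the desired contradiction.
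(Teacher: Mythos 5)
The paper states this classical theorem as background and never proves it, so there is no in-paper argument to compare against. Your outline follows the standard Zak-transform / winding-number route (as opposed to the biorthogonality-commutator route of Battle). The reduction to $\Lambda=\Z^2$ via symplectic operators, the characterization that $(g,\Z^2)$ is a Riesz basis iff $0<c\le |Zg|\le C$ a.e., the translation of finiteness of \eqref{e:up} into $Zg\in H^1_{\rm loc}$ via $\partial_1 Zg = Z(g')$ and $\partial_2 Zg = 2\pi i\bigl(x\,Zg - Z(Xg)\bigr)$ (cf.~Lemma~\ref{l:gZg}), and the topological obstruction coming from the quasi-periodicity cocycle are all correct and classical.

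There is, however, a genuine gap in your regularization step, and it sits exactly at the hardest point. The essential range of $Zg$ lies in the \emph{non-convex} set $\{z:|z|\ge c\}$, so the essential lower bound on $|Zg|$ does \emph{not} by itself imply that the mollifications $Zg*\rho_\eps$ stay bounded away from zero: a rapidly rotating phase could produce near-cancellation in the local average. What rescues the argument is the critical embedding $H^1_{\rm loc}(\R^2)\hookrightarrow\VMO_{\rm loc}(\R^2)$ (Poincar\'e in dimension $n=2$), which gives $\sup_z \fint_{B_\eps(z)}\bigl|Zg-\fint_{B_\eps(z)}Zg\bigr|\to 0$ as $\eps\to 0$, and hence $|Zg*\rho_\eps|\ge c/2$ everywhere for small $\eps$. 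This $\VMO$ mechanism is precisely what underlies the Coifman--Semmes degree proof and the Brezis--Nirenberg theory of degree for $\VMO$ maps into $S^1$; without invoking it, the claim that the mollifications remain nonvanishing is unjustified and the winding computation cannot get off the ground. You should also make precise how the $O(\eps)$ defect in the quasi-periodicity of $Zg*\rho_\eps$ still forces the (integer-valued) boundary winding number to equal $1$, which then contradicts the vanishing of the winding for a nowhere-zero continuous map on a square.
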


Recently, the following generalization of the Balian-Low theorem was proved in \cite{clpp}
(see also \cite{cmp} for a similar generalization of the amalgam Balian-Low theorem).

\begin{thm}[\cite{clpp}]\label{t:clpp}
  Let $g\in L^2(\R)$ and let $\Lambda\subset\R^2$ be a lattice of rational density
  such that the Gabor system $\{e^{2\pi ibx}g(x-a) : (a,b)\in\Lambda\}$ is a Riesz basis
  for its closed linear span $\calG(g,\Lambda)$.
  If there exists a time-frequency shift $e^{2\pi i\eta x} g(x-u)$,
  $(u,\eta) \in \R^2 \backslash \Lambda$, of $g$
  which is contained in $\calG(g,\Lambda)$, then \eqref{e:up} holds.
\end{thm}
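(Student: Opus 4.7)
As a first step, I would apply a suitable metaplectic operator to bring $\Lambda$ into product form $\Lambda = \alpha\Z \times \beta\Z$ with $\alpha\beta = p/q$ in lowest terms; metaplectic operators preserve the Gabor Riesz basis property and, under the corresponding linear change of variables, also preserve the finiteness asserted in \eqref{e:up}. After rescaling $g$, I may assume $\alpha = 1$, so that $\beta = p/q$. I would then employ the Zak transform $Zg(x,\xi) = \sum_{k\in\Z} g(x-k)\, e^{2\pi i k\xi}$, a unitary map $L^2(\R) \to L^2([0,1)^2)$ that is quasi-periodic in both variables and intertwines integer time shifts with multiplication by characters. For a product lattice of rational density, a standard fiberization expresses the closed span $\calG(g,\Lambda)$ in Zak coordinates as the range of a $q$-component vector-valued function built from the frequency samples $Zg(x, \xi - k p/q)$ for $k = 0, \dots, q-1$, and the Riesz basis property becomes equivalent to two-sided pointwise bounds on the associated $q \times q$ Gram matrix $\bG(x,\xi)$.

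The hypothesis $\pi(z)g \in \calG(g,\Lambda)$ for $z = (u,\eta) \notin \Lambda$ supplies the crucial extra algebraic relation. Expanding $\pi(z)g$ in the Riesz basis $\{\pi(\lambda)g\}_{\lambda\in\Lambda}$ and transferring to the Zak side, the expansion coefficients assemble into a bounded matrix-valued multiplier on the fundamental domain that relates the fibers of $Zg$ at $(x,\xi)$ and $(x-u,\xi-\eta)$. In a scalar heuristic this reads $e^{2\pi i\eta x}\, Zg(x-u,\xi-\eta) = m(x,\xi)\, Zg(x,\xi)$, where $m \in L^\infty$ is bounded away from zero in modulus by virtue of the Riesz lower bound on $\bG$. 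The whole argument then reduces to extracting a contradiction from this functional equation in conjunction with the regularity forced by the moment hypothesis.

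If both moments in \eqref{e:up} were finite, standard Zak-transform theory yields that $Zg$ admits a jointly continuous representative on the quasi-periodic torus. Together with the two-sided Riesz bounds, $\det \bG$ would then be continuous and non-vanishing on $\mathbb{T}^2$, and the multiplier $m$ would inherit a continuous, non-vanishing representative. Its winding number along each generator of $\pi_1(\mathbb{T}^2)$ would accordingly be a well-defined integer. However, the quasi-periodicity of $Zg$ combined with the non-lattice shift by $z$ in the functional equation above forces that winding number to equal a fractional quantity determined by the position of $z$ modulo $\Lambda$, which is nonzero by hypothesis; this is the desired contradiction.

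The main obstacle I foresee is precisely the step in which the scalar winding-number argument above is upgraded to the genuinely matrix-valued setting dictated by rational density. One must isolate a scalar topological invariant, such as $\det \bG$ or the determinant of the matrix multiplier corresponding to $m$, whose winding is both computable directly from $z$ and genuinely sensitive to the non-integer offset of $z$ from $\Lambda$, rather than being absorbed into the integer cycle structure coming from the $q$-fold fiber permutations. Carrying out this combinatorial bookkeeping together with the transfer of moment regularity down to the matrix entries is, in my view, the heart of the proof.
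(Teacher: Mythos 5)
Theorem~\ref{t:clpp} is quoted here from \cite{clpp} without proof; the present paper does not re-derive it, so there is no in-paper argument against which your proposal can be checked. For what it is worth, your sketched strategy---reducing to a separable rational-density lattice by symplectic/metaplectic conjugation, fiberizing via the Zak transform into the $P\times Q$ matrix functions $A_g$ of Lemma~\ref{l:matrix}, using finiteness of both moments in \eqref{e:up} to get a continuous representative of $Zg$, and then arguing by a topological obstruction---is broadly the line taken in \cite{clpp}, so your overall heading is sound.

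The gap is precisely the one you flag at the end, and it is not a small one. At rational density strictly below one, $(g,\Lambda)$ is a Riesz \emph{sequence} rather than a Riesz basis for $L^2(\R)$, so $Zg$ itself may vanish; the only available lower bound is $\essinf_z\sigma_0(A_g(z))>0$, which lives at the level of the matrix $A_g$. This disables the scalar density-one winding argument, which runs off $|Zg|$ being bounded away from zero. The replacements you mention do not yet carry the argument: by \eqref{e:shifts}, the Gram determinant $\det(A_g^*A_g)$ is genuinely $(\tfrac1P,1)$-periodic, Hermitian, and strictly positive, so it has zero winding along both generators of the torus and carries no information about $z$; and the ``multiplier'' coming from the expansion of $\pi(z)g$ in the Riesz basis is itself a $Q\times Q$ matrix, whose determinant you neither identify explicitly nor show to have a well-defined nonintegral winding---so the ``fractional quantity determined by $z$ modulo $\Lambda$'' is a claim you have yet to formulate, let alone establish. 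Producing a concrete scalar functional of $A_g$ and that matrix multiplier whose winding is forced by the offset of $z$ from $\Lambda$ to be nonintegral while the continuity you derived forces it to be an integer is the substantive content of the theorem, and it is what your outline leaves open.
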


Note that condition \eqref{e:up} is equivalent to having $g \notin H^1(\R)$
or $\widehat g\notin H^1(\R)$, where $H^1(\R)$ denotes the usual Sobolev space
in $L^2(\R)$ of regularity order $1$.
Therefore, Theorem~\ref{t:clpp} can be rephrased as follows:
if $g,\widehat g\in H^1(\R)$, then the time-frequency shift $e^{2\pi i\eta x}g(x-u)$
has a positive $L^2$-distance to the space $\calG(g,\Lambda)$
whenever $(u,\eta) \in \R^2$ has a positive Euclidean distance to the lattice $\Lambda$.
As our main result, we are going to prove the following quantitative version of Theorem~\ref{t:clpp}
which relates the two mentioned distances.
In the sequel, we denote by $\HH^1(\R)$ the set of all $g\in H^1(\R)$
satisfying $\widehat g\in H^1(\R)$.

\begin{thm}\label{t:quanty}
Let $g\in\HH^1(\R)$ and let $\Lambda\subset\R^2$ be a lattice of rational density such that
$\{e^{2\pi ibx}g(x-a) : (a,b)\in\Lambda\}$ is a Riesz basis
for its closed linear span $\calG(g,\Lambda)$.
Then there exist constants $\LowerBound,\UpperBound > 0$ such that for all $(u,\eta)\in\R^2$ we have
\begin{equation}\label{e:main}
  \LowerBound \cdot \dist \big( (u,\eta), \Lambda \big)
  \,\le\, \dist \big( e^{2\pi i\eta x} g(x-u) , \calG(g,\Lambda) \big)
  \,\le\, \UpperBound \cdot \dist \big( (u,\eta), \Lambda \big).
\end{equation}
\end{thm}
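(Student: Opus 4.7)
Write $F(z) := \dist\bigl(\pi(z)g,\,\calG(g,\Lambda)\bigr)$. The plan is to show that $F$ is a Lipschitz, $\Lambda$-periodic function on $\R^2$ whose zero set is exactly $\Lambda$, and then to quantify its vanishing rate near the lattice. The $\pi(\Lambda)$-invariance of $\calG(g,\Lambda)$, combined with the Weyl commutation relation, yields $F(z+\mu) = F(z)$ for every $\mu \in \Lambda$. For the upper bound, the hypothesis $g \in \HH^1(\R)$ makes translation and modulation Lipschitz in $L^2$: $\|T_u g - g\|_2 \le |u|\,\|g'\|_2$ and, via Plancherel, $\|M_\eta g - g\|_2 \le |\eta|\,\|(\hat g)'\|_2$, so $\|\pi(w)g - g\|_2 \le C_g\,\|w\|$. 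Picking a nearest lattice point $\lambda$ to $z$ and using periodicity gives
\[
 F(z) \,=\, F(z-\lambda) \,\le\, \|\pi(z-\lambda)g - g\|_2 \,\le\, C_g\,\dist(z,\Lambda),
\]
which is the upper estimate in \eqref{e:main} with $\beta = C_g$.

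For the lower bound, Theorem~\ref{t:clpp} asserts that $F^{-1}(\{0\}) = \Lambda$. It thus suffices to establish a \emph{local} linear lower bound $F(z) \ge \alpha_0\,\|z\|$ on some fixed neighborhood $U$ of the origin: by periodicity the same bound then holds near every lattice point, and on the compact set $D \setminus \bigcup_{\lambda \in \Lambda}(\lambda + U)$ (with $D$ a bounded fundamental domain of $\Lambda$) the positive continuous function $F$ is bounded below by some $c > 0$; since $\dist(z,\Lambda) \le \diam(D)$ on $D$, one obtains $F(z) \ge (c/\diam(D))\,\dist(z,\Lambda)$ there, and together with the local bound this gives the global linear lower estimate.

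Establishing the local bound is the main obstacle. The regularity $g \in \HH^1(\R)$ yields the first-order expansion
\[
 \pi(u,\eta)g \,=\, g \,-\, u\,g' \,+\, 2\pi i\,\eta\,xg \,+\, r(u,\eta), \qquad \|r(u,\eta)\|_2 \,=\, o\bigl(\|(u,\eta)\|\bigr),
\]
obtained from the strong continuity of translation and modulation applied to $g',(\hat g)' \in L^2$. Writing $P$ for the orthogonal projector onto $\calG(g,\Lambda)$ and using $(I-P)g = 0$ gives
\[
 F(u,\eta) \,\ge\, \bigl\|(I-P)\bigl(-u\,g' + 2\pi i\,\eta\,xg\bigr)\bigr\|_2 \,-\, o\bigl(\|(u,\eta)\|\bigr),
\]
so the local bound reduces to the \emph{nondegeneracy} statement that $(I-P)g'$ and $(I-P)(xg)$ are $\R$-linearly independent in $\calG(g,\Lambda)^\perp$; equivalently, no nontrivial real combination $\alpha g' + 2\pi i\beta\,xg$ belongs to $\calG(g,\Lambda)$. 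This is an infinitesimal counterpart of the obstruction of Theorem~\ref{t:clpp} and is precisely the kind of statement captured by a weak subspace Balian-Low theorem, as flagged in the abstract.

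To prove this nondegeneracy, the plan is to invoke the Zak transform, which is tailored to lattices of rational density: it conjugates $\pi(\Lambda)$-invariant subspaces of $L^2(\R)$ into multiplication-invariant submodules over a matrix-valued fiber on a torus, so that the membership of $\alpha g' + 2\pi i\beta\,xg$ in $\calG(g,\Lambda)$ becomes a pointwise linear relation among the Zak transforms of $g$, $g'$, and $xg$. One then rules this out for $(\alpha,\beta) \neq (0,0)$ by exploiting the Riesz-basis property of $(g,\Lambda)$ together with the smoothness built into $\HH^1(\R)$. Once this infinitesimal rigidity is in hand, the periodicity and compactness arguments described above furnish the constant $\alpha > 0$ and close the proof of \eqref{e:main}.
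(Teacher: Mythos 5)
The architecture of your proposal matches the paper's proof step for step: the upper bound via Lipschitz estimates for translation and modulation (cf.~Proposition~\ref{p:upper}), $\Lambda$-periodicity of the distance function via commutation of the projector with $\pi(\Lambda)$ (Lemma~\ref{l:commute}), reduction of the lower bound to a local estimate at the origin via differentiability of the time-frequency map with derivative $-ag'+2\pi ibXg$ (Lemma~\ref{l:differentiable}), and globalization by a compactness argument invoking Theorem~\ref{t:clpp}.

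What you have not proved is the nondegeneracy claim that no nontrivial $-ag'+2\pi ibXg$ lies in $\calG(g,\Lambda)$; this is Proposition~\ref{p:weak} and it is the technical heart of the entire theorem. Your sketch of it --- that the Zak transform turns membership into a ``pointwise linear relation'' among $Zg$, $Zg'$, $Z(Xg)$ which one then ``rules out'' using the Riesz bounds and smoothness --- does not describe an argument that closes, because the structure is not an algebraic relation. Since $Zg'=\partial_1 Zg$ and $Z(Xg)$ involves $\partial_2 Zg$ (Lemma~\ref{l:gZg}), the hypothetical membership translates into a first-order \emph{differential} equation for the matrix function $A_g$ along the direction $(a,b)$. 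The paper's proof then uses uniqueness of solutions to that ODE (Lemma~\ref{l:unique}) to show that the fiberwise projection $A(A^*A)^{-1}A^*$ fixes $A_g(\,\cdot + t(a,b))$ for \emph{all} $t\in\R$, which, fed back through Lemma~\ref{l:dist}, forces $\pi(-ta,-tb)g\in\calG(g,\Lambda)$ for every $t$ --- contradicting Theorem~\ref{t:clpp} because $\Lambda$ is discrete. Without this ODE-and-uniqueness mechanism (or a substitute), the local lower bound, and hence \eqref{e:main}, remains unestablished.
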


The \emph{upper} bound in \eqref{e:main} in fact holds for any $g\in\HH^1(\R)$
and any lattice $\Lambda\subset\R^2$, regardless of $\{e^{2\pi ibx}g(x-a) : (a,b)\in\Lambda\}$
being a Riesz sequence or the lattice $\Lambda$ having rational density;
besides, an explicit constant $\UpperBound$ can be found easily; see Proposition~\ref{p:upper} below.
On the other hand, finding an explicit constant $\LowerBound$ is more elusive.
Even in the case where $(g,\Lambda)$ forms an orthonormal system,
we were only able to derive a constant $\LowerBound$ such that \eqref{e:main}
holds for $(u,\eta)$ close to the lattice $\Lambda$; see Theorem~\ref{t:ortho}.
We expect such a constant to depend on the Riesz bounds of $\{e^{2\pi ibx}g(x-a) : (a,b)\in\Lambda\}$
and on the norms $\|g\|_{L^2}$, $\|g\|_{H^1}$, and $\|\widehat g\|_{H^1}$.
%

\medskip{}

\emph{Quantitative Balian-Low estimates for general elements in the Gabor space.}
Writing $\pi(u,\eta)f(x) = e^{2\pi i\eta x}f(x-u)$,
one might wonder whether the estimate
\[
  \dist \big( \pi(u,\eta) f, \mathcal{G}(g,\Lambda) \big)
  \asymp \dist \big( (u,\eta) , \Lambda \big) \cdot \|f\|_{L^2}
\]
holds for general $f \in \mathcal{G}(g,\Lambda)$ and not just for $f = g$.
In general this is \emph{not} the case.
Indeed, if $g$ is a Gaussian, then $(g,2 \Z {\times} \tfrac{2}{3} \Z)$
is a Riesz basis for its closed linear span $\mathcal{G}(g, 2 \Z {\times} \tfrac{2}{3} \Z)$,
but there exists a function $0 \neq f \in \mathcal{G}(g, 2 \Z {\times} \tfrac{2}{3} \Z)$
satisfying $f(\cdot - 1) \in \mathcal{G}(g, 2 \Z {\times} \tfrac{2}{3} \Z)$
(see Example~\ref{exa:DistanceOnlyForGenerator} for details).
Therefore, we see that the distance
$\dist \big( \pi(1,0) f , \calG (g, 2 \Z {\times} \tfrac{2}{3} \Z) \big)$
vanishes, even though $\dist( (1,0) , 2 \Z {\times} \tfrac{2}{3} \Z) \cdot \|f\|_{L^2} \neq 0$.

\medskip{}

\emph{Implications regarding the OFDM communication scheme.}
One motivation for analyzing the distance of the time-frequency shift
$\pi(u,\eta) g$ to the Gabor space $\mathcal{G}(g,\Lambda)$
stems from the communication scheme called \emph{orthogonal frequency division multiplexing} (OFDM).
In OFDM, the sender wants to transmit the coefficients
$c = (c_{k,\ell})_{k,\ell \in \Z} \in \ell^2(\Z^2)$ to the receiver.
This is done by selecting a fixed Gabor Riesz sequence
$\big( \pi (k \alpha, \ell \beta) g \big)_{k,\ell \in \Z}$ to form the
\emph{transmission signal} $F c = \sum_{k,\ell \in \Z} c_{k,\ell} \, \pi(k \alpha, \ell \beta) g$,
which is then sent to the receiver through a \emph{communication channel}.
Mathematically, the effect of the channel is modeled as a linear operator $T : L^2(\R) \to L^2(\R)$;
that is, the signal that arrives at the receiver is $T F c$ instead of $F c$.

The first step of the reconstruction procedure in OFDM is to apply
the \emph{reconstruction operator} $R$ given by
$R f = \big( \langle f , \pi(k \alpha, \ell \beta) g^\circ \rangle \big)_{k,\ell \in \Z}$
to the signal $T F c$,
thereby obtaining the sequence $\widetilde{c} = R T F c \in \ell^2(\Z^2)$.
Here, $g^\circ$ is the dual window for the Riesz sequence
$\big( \pi (k \alpha, \ell \beta) g \big)_{k,\ell \in \Z}$;
i.e., ${g^\circ \in \mathcal{G} := \mathcal{G}(g, \alpha \Z {\times} \beta \Z)}$
satisfies the biorthogonal property
\(
  \langle \pi(k \alpha, \ell \beta) g, \pi(k' \alpha, \ell' \beta) g^\circ \rangle
  = \delta_{k,k'} \delta_{\ell,\ell'} 
\) 
for $k,k',\ell,\ell' \in \Z$.
At least for the ideal communication channel $T = \operatorname{Id}_{L^2(\R)}$, this guarantees
perfect reconstruction, meaning that $\widetilde{c} = c$.
For more general channels, this is not the case, but one might hope to reconstruct
$c$ by applying a suitable (linear) \emph{post-processing operator} $P$ to $\widetilde{c}$.

In fact, there exists such a bounded post-processing operator $P$ satisfying $P R T F c = c$
for all $c \in \ell^2(\Z^2)$ if and only if the operator $R T$ is bounded below on the Gabor space
$\mathcal{G}$, meaning that $\|R T f\|_{\ell^2} \gtrsim \|f\|_{L^2}$
for all $f \in \mathcal{G}$.
It is not hard to see that $\|R h\|_{\ell^2} \asymp \| \bP h \|_{L^2}$ for $h \in L^2(\R)$,
where we denote by $\bP$ the orthogonal projection onto the Gabor space $\mathcal{G}$.
For the important special case that $T$ is a pure time-frequency shift
$\pi(u,\eta)$, reconstruction is thus possible if and only if
$\| \bP \pi(u,\eta) f \|_{L^2} \gtrsim \|f\|_{L^2}$ for all $f \in \mathcal{G}$,
which is equivalent to the existence of a constant $c < 1$ satisfying
\begin{equation}
  \dist\big(\pi(u,\eta)f,\calG\big)
  = \big\| (I - \bP) \pi(u,\eta) f \big\|_{L^2} \leq c \, \|f\|_{L^2} ,
  \qquad \forall \, f \in \mathcal{G} .
  \label{eq:OffBandEnergyLoss}
\end{equation}
The term $\dist(\pi(u,\eta)f,\calG)$ measures the \emph{off-band energy loss}
caused by the time-fre\-quen\-cy shift $\pi(u,\eta)$, that is, the proportion of the signal energy
that gets ``pushed out of the Gabor space'' by applying the time-frequency shift $\pi(u,\eta)$.
Even in the case where the off-band energy loss is small enough so that \eqref{eq:OffBandEnergyLoss}
holds, it is interesting to know more precise upper and lower bounds for this quantity,
since it influences the stability of the reconstruction.
Theorem~\ref{t:quanty} shows that in the case $f = g$, the off-band energy loss is of the order
$\dist ( (u,\eta), \alpha \Z {\times} \beta \Z)$.

\medskip{}

\emph{Structure of the proof.}
The outline of the proof of Theorem~\ref{t:quanty} is as follows.
First of all, we note that it suffices to establish the inequality \eqref{e:main}
for $(u,\eta)$ in a neighborhood of the origin $(0,0)$;
the inequality then holds for all $(u,\eta) \in \R^2$
(with possibly different constants $\LowerBound$ and $\UpperBound$) by Theorem~\ref{t:clpp}
and a compactness argument.
In order to analyze the behavior of the quantity $\dist(\pi(u,\eta) g,\calG(g,\Lambda))$
for $(u,\eta)$ close to the origin, we first show that the \emph{time-frequency map}
$S_g : (a,b) \mapsto \pi(a,b) g$ is differentiable at $(0,0)$
with (Fr\'echet) derivative $(a,b) \mapsto -ag'+2\pi i b X g$, where $X$ is the position operator
defined formally by $Xf (x) = x f(x)$; see Lemma~\ref{l:differentiable}.
We then prove in Proposition~\ref{p:weak} that $-ag'+2\pi i b X g$ is not contained
in $\calG(g,\Lambda)$ unless $a=b=0$.
Denoting by $\bP$ the orthogonal projection from $L^2(\R)$ onto $\calG(g,\Lambda)$,
this implies that there exists a constant $\gamma>0$
with $\|(I-\bP)(-ag'+2\pi i b X g)\|_{\LTwoIndex} \ge \gamma \, \|(a,b)\|_2$ for all $(a,b)\in\R^2$.
The claim then follows immediately because $(a,b) \mapsto (I-\bP)(-ag'+2\pi i b X g)$ linearizes
the map $(a,b) \mapsto (I-\bP)(e^{2\pi ib x}g(x-a))$
in a neighborhood of $(0,0)$.

The main ingredients of the proof are thus the differentiability of the time-fre\-quen\-cy map
(see Section~\ref{s:diff}) and the fact that none of its directional derivatives
${-ag' + 2\pi i b X g}$ with $(a,b)\in\R^2\backslash\{(0,0)\}$ are contained in $\calG(g,\Lambda)$
(see Proposition~\ref{p:weak}).
While the former is probably folklore (although we could not find a reference),
the latter seems to be a new result and should be interesting in its own right.
We also point out a close relationship between Proposition~\ref{p:weak}
and the weak Balian-Low theorem for subspaces from \cite{ghhk};
see Remark~\ref{r:weak} for a detailed discussion.

As mentioned above, we were unable to derive a closed-form formula for the constant $\LowerBound$
in Equation~\eqref{e:main}.
However, if we assume the Gabor system $\{e^{2\pi ibx}g(x-a) : (a,b) \!\in\! \Lambda\}$
to be \emph{orthonormal}, then we can find an \emph{explicit} constant $\LowerBound > 0$
such that \eqref{e:main} holds for all $(u,\eta)$ in a neighborhood of the lattice $\Lambda$;
see Theorem~\ref{t:ortho}.
This result then leads to a statement similar to Theorem~\ref{t:clpp}
but without assuming the rational density of $\Lambda$; see Corollary~\ref{c:new}.

\medskip{}

The paper is organized as follows:
In Section~\ref{s:prep},
we show how the main properties that we are interested in
(the regularity of $g$, the property of $(g,\Lambda)$ being a Riesz sequence,
and the distance $\dist (\pi(\mu), \calG(g,\Lambda))$)
can be described via the Zak transform and certain associated matrix multiplication operators.
Section~\ref{s:diff} contains the aforementioned differentiability result
for the time-frequency map of $\HH^1(\R)$ functions.
The proof of Theorem~\ref{t:quanty} is given in Section~\ref{s:quanty_proof}.
Finally, in Section~\ref{s:ortho} we provide an explicit local lower bound $\LowerBound$
in the case where the Gabor system is orthonormal.

Several results that are technical or only tangentially related to the core arguments
are deferred to \ref{sec:Auxiliary}.
Although most of them should be well-known or be considered folklore,
we either give detailed references or include their proofs for the sake of completeness.

\section{Preparations}
\label{s:prep}

\noindent{\bf Notation.}
Let us begin with collecting some notation which will be used throughout the paper.
We set $\N := \{1,2,\ldots\}$ and $\N_0 := \N\cup\{0\}$.
The closure of a subset $M \subset X$ of a metric space $X$ will be denoted by $\overline{X}$.
The \emph{Lebesgue measure} of a Borel set $E \subset \R^n$ is denoted by $\Lebesgue(E)$.
If $g : \R\to\C$ is measurable, we write $Xg$ for the function $x \mapsto x \, g(x)$,
that is, $(X g)(x) = x \, g(x)$, $x\in\R$.

Let $\calH$ be a Hilbert space, and $\Phi = (\varphi_i)_{i \in I}$ be a family of vectors
in $\calH$.
This family is called a \emph{frame} for $\calH$ if
\({
  A \, \|f\|_{\calH}^2
  \leq \sum_{i \in I} |\langle f, \varphi_i \rangle|^2
  \leq B \, \|f\|_{\calH}^2
}\)
for all $f \in \calH$ and certain constants $A,B \in (0,\infty)$.
If $\Phi$ is a frame for its closed linear span $\overline{\linspan} \{\varphi_i \colon i \in I \}$,
then we say that $\Phi$ is a \emph{frame sequence}.
We say that $\Phi$ is a \emph{Riesz sequence} if there are $A,B \in (0,\infty)$ such that
${A \, \|c\|_{\ell^2} \leq \big\| \sum_{i \in I} c_i \, \varphi_i \big\| \leq B \, \|c\|_{\ell^2}}$
for all finitely supported sequences $c = (c_i)_{i \in I} \in \ell^2 (I)$.
If $\Phi$ is a Riesz sequence and $\linspan \bigl\{\varphi_i \colon i \in I\bigr\}$
is dense in $\calH$, we say that $\Phi$ is a \emph{Riesz basis} for $\calH$.
Each Riesz basis is a frame.

Let $T : \calH\to\calH$ be a bounded linear operator on a (complex) Hilbert space $\calH$.
The \emph{spectrum} of $T$ will be denoted by $\sigma(T)$; that is,
\[
  \sigma(T)
  = \big\{
      \la \in \C
      \colon
      T - \la I \text{ is not boundedly invertible}
    \big\} \, .
\]
We denote by $\varrho(T)$ the complement set of $\sigma(T)$ in $\C$ which is called
the {\em resolvent set} of $T$.
For a bounded linear operator $A : \calH \to \calK$ between two Hilbert spaces $\calH$ and $\calK$,
we define
\begin{equation}
  \sigma_0(A) \!:=\! \sqrt{\min\sigma(A^*A)} \! \in \! [0,\infty)
  \!\!\quad\text{and}\quad\!\!
  \sigma_1(A) \!:=\! \sqrt{\inf[\sigma(A^*A)\backslash\{0\}]} \! \in \! [0,\infty] .
  \label{eq:SigmaDefinitions}
\end{equation}
Note that $\sigma(A^\ast A) = \{0\}$ if and only if $A = 0$,
in which case we have $\sigma_1 (A) = \infty$;
on the other hand, we have $\sigma_1 (A) < \infty$ for $A \neq 0$.
If $A$ is a matrix, then $\sigma_0(A)$ is the smallest singular value of $A$,
while $\sigma_1(A)$ is the smallest {\em positive} singular value of $A$.

We occasionally consider the vector-valued $L^2$ space $L^2(\Omega;\C^k)$,
which we equip with the inner product
$\langle f,g \rangle = \strut \int \langle f(\omega), g(\omega) \rangle_{\C^k} d\mu(\omega)$,
where $\langle \cdot, \cdot \rangle_{\C^k}$ denotes the standard inner product on $\C^k$.

The \emph{Fourier transform} $\widehat{g}$ of $g \in L^2(\R)$ is defined by
\[
  \widehat{g} (\omega)
  := \lim_{R \to \infty}\int_{-R}^R g(x) \, e^{-2\pi ix\omega}\,dx \, ,
\]
where the limit is taken in $L^2(\R)$.
For $a,b\in\R$ we also define the time-frequency shift operator
\[
  [\pi(a,b)g](x)
  := e^{2\pi ibx}g(x-a),\qquad x\in\R \, ,
\]
which can be expressed as $\pi(a,b) = M_b T_a$ where $T_a$ and $M_b$
denote the operators of \emph{translation} by $a\in\R$
and \emph{modulation} by $b\in\R$, respectively.
For $k \in \N$, we set $\HH^k (\R) := \{ f \in H^k(\R) \colon \widehat{f} \in H^k(\R) \}$,
with the usual (complex-valued) $L^2$-Sobolev space $H^k (\R) = W^{k,2}(\R)$.

A \emph{lattice} in $\R^2$ is a set $\Lambda =  A\Z^2$ with $A\in \GL(2,\R)$.
Its \emph{density} is defined as $|\det A\,|^{-1}$.
If $\Lambda$ is a lattice in $\R^2$ and $g\in L^2(\R)$,
we denote by $(g,\Lambda)$ the \emph{Gabor system} generated by $g$ and $\Lambda$, that is,
\[
  (g,\Lambda) := \{\pi(\la)g : \la\in\Lambda\}.
\]
The \emph{Gabor space} generated by $g$ and $\Lambda$ is
defined as ${\calG(g,\Lambda) := \ol{\linspan}\,(g,\Lambda)}$,
with the closure taken in $L^2 (\R)$.

The \emph{Zak transform} of $g\in L^2(\R)$ is defined as
\begin{equation}
  Zg(x,\omega)
  = \lim_{N\to\infty}
      \sum_{k=-N}^N
        e^{2\pi ik\omega} g(x-k),
  \qquad (x,\omega)\in (0,1)^2,
  \label{eq:ZakTransformDefinition}
\end{equation}
where the limit is taken in $L^2((0,1)^2)$.
The Zak transform $g \mapsto Z g$ is a unitary operator from $L^2(\R)$ to $L^2((0,1)^2)$.
In the following, we will consider the Zak transform $Z g$ of $g \in L^2(\R)$ as
an (a.e.~defined) function on $\R^2$, by using Equation~\eqref{eq:ZakTransformDefinition}
on all of $\R^2$, where the limit is taken in $L_{\mathrm{loc}}^2 (\R^2)$.
This extended Zak transform has the following properties
(all of which hold for a.e.~$(x,\omega) \in \R^2$):
\label{p:Zak}
\begin{enumerate}
  \item[{\rm (a)}] $Zg(x+m,\omega+n) = e^{2\pi im\omega} \, Zg(x,\omega)$ for all $m,n\in\Z$.

  \item[{\rm (b)}] $Z[\pi(u,\eta)g](x,\omega) = e^{2\pi i\eta x} \, Zg(x-u,\omega-\eta)$
                   for all $(u,\eta) \in \R^2$.

  \item[{\rm (c)}] $(Z[\pi(m,n)g])(x,\omega) = e^{2\pi i(nx-m\omega)} \, Zg(x,\omega)$
                   for all $m,n\in\Z$.

  \item[{\rm (d)}] $Z\widehat{g}(x,\omega) = e^{2\pi i x\omega}\,Zg(-\omega,x)$.

  \item[{\rm (e)}] $g(x) = \int_0^1 Zg(x,\omega)\,d\omega$
                   \;and\; $\wh g(\omega) = \int_0^1 e^{-2\pi ix\omega}Zg(x,\omega)\,dx$.
\end{enumerate}
For all these properties, we refer to \cite[Chapter~8]{g}.
The property (a) of $Zg$ is called {\em quasi-periodicity}.

\subsection{Reduction to matrix multiplication operators}%

In this subsection, we show that the properties and quantities that we are interested in%
---the distance $\dist(\pi(\mu) g, \calG(g,\Lambda))$ and whether $(g,\Lambda)$ is a Riesz sequence%
---can be conveniently reformulated using certain matrix multiplication operators
\[M_A : L^2(\Omega; \C^k) \to L^2(\Omega; \C^\ell), f \mapsto A(\cdot) f(\cdot).\]
Here, the matrix function $A : \Omega \to \C^{\ell \times k}$ will be defined using
the Zak transform.
More details regarding these matrix multiplication operators can be found in \ref{ss:multi}.

We start by considering the Gabor system $(g,\Lambda)$
associated to the lattice $\Lambda = \frac{1}{Q} \Z {\times} P \Z$ (where $P,Q \in \N$)
and connect the spectral properties of the \emph{frame operator}
\[
  \frameOP : \quad
  L^2 (\R) \to L^2(\R), \quad
  f \mapsto \sum_{\lambda \in \Lambda} \langle f, \pi(\lambda) g \rangle \, \pi(\lambda) g
\]
and the \emph{Gram operator} $\gramian : \ell^2 (\Z^2) \to \ell^2 (\Z^2)$ defined by
\[
  \gramian (c_{n,k})_{n,k \in \Z}
  = \Big(
      \big\langle
        \sum_{n,k \in \Z}
          c_{n,k} \pi(Q^{-1} n, Pk) g, \,\,
        \pi(Q^{-1} m, P \ell) g
      \big\rangle
    \Big)_{m,\ell \in \Z}
\]
to matrix multiplication operators on the domain $R_P := (0, \tfrac{1}{P}) {\times} (0,1)$.
This relies on using the unitary operators
$\vectorizeOperator : L^2( (0,1)^2)\to L^2(R_P, \C^P)$
and ${\specialFourier : \ell^2(\Z^2)\to L^2(R_P, \C^Q)}$, defined by
\begin{equation}
  (\vectorizeOperator f)(x,\omega) := \big( f(x+\tfrac{k}{P}, \omega) \big)_{k=0}^{P-1}
  \quad \text{and} \quad
  \specialFourier c = \bigg(\sum_{s,n\in\Z}c_{sQ+\ell,n} \, e_{s,n}\bigg)_{\ell=0}^{Q-1},
  \label{eq:PeriodizationOperators}
\end{equation}
where $f\in L^2( (0,1)^2)$ and $c = (c_{n,m})_{n,m \in \Z} \in \ell^2(\Z^2)$,
and where we use the function
${e_{s,n}(x,\omega) := P^{1/2} \cdot e^{2\pi i(nPx-s\omega)}}$ defined for $(x,\o)\in R_P$.
Furthermore, we denote by $S_n \in \C^{n {\times} n}$ the cyclic shift operator
satisfying $S_n e_i = e_{i-1}$ for $i \in \firstN{n-1}$ and $S_n e_0 = e_{n-1}$
for the standard basis $\{e_0,\dots,e_{n-1}\}$ of $\C^n$.
Finally, for $\omega\in\R$ we define the matrices
\[
  L_\omega
  := S_P \, \diag (e^{2\pi i \omega}, 1, \dots, 1) \in \C^{P {\times} P}
\]
and
\[
  R_\omega
  = \diag(e^{-2\pi i \omega},1,\dots,1)\,S_Q^{-1} \in \C^{Q {\times} Q} \, .
\]

%

\begin{lem}\label{l:matrix}
For $P,Q\in\N$ and $g\in L^2(\R)$, $g\neq 0$,
let us define the matrix function $A_g : \R^2 \to \C^{P{\times} Q}$ by
\[
  A_g(x,\omega)
  := \tfrac{1}{\sqrt P}\big(Zg(x+\tfrac{k}{P} - \tfrac{\ell}{Q}, \omega)\big)_{k,\ell=0}^{P-1,Q-1}.
\]
Then for a.e.\ $(x,\omega)\in\R^2$ we have
\begin{equation}\label{e:shifts}
  A_g(x + \tfrac 1P, \omega) = L_\omega A_g(x,\omega)
  \qquad\text{and}\qquad
  A_g(x - \tfrac 1Q, \omega) =  A_g(x,\omega) R_\omega \, .
\end{equation}
In particular, $A_g^*A_g$ is $(\tfrac{1}{P},1)$-periodic
and $A_gA_g^*$ is $(\tfrac{1}{Q}, 1)$-periodic.

If $\Lambda = \tfrac{1}{Q} \Z {\times} P \Z$, then $(g,\Lambda)$ is a Bessel sequence
if and only if $Zg\in L^\infty(\R^2)$.
In this case, the synthesis operator
\[
  \synthOP : \quad
  \ell^2 (\Z^2) \to L^2(\R), \quad
  (c_{n,m})_{n,m \in \Z} \mapsto \sum_{n,m \in \Z} c_{n,m} \, \pi (Q^{-1}n, Pm) g ,
\]
the frame operator $\frameOP$, and the Gram operator $\gramian$ of $(g,\Lambda)$ satisfy
\begin{equation}\label{e:operators}
  \synthOP = (\vectorizeOperator Z)^\ast M_{A_g} \specialFourier,
  \quad
  \frameOP = (\vectorizeOperator Z)^\ast M_{A_g A_g^*} (\vectorizeOperator Z),
  \quad\text{and}\quad
  \gramian = \specialFourier^\ast M_{A_g^* A_g} \specialFourier,
\end{equation}
respectively, where $M_{A_gA_g^*}$
(respectively $M_{A_g^*A_g}$  or $M_{A_g}$) is the matrix multiplication operator
(cf.~\ref{ss:multi}) with respect to $A_gA_g^*$ (resp.~$A_g^*A_g$ or $A_g$)
acting on $L^2(R_P;\C^P)$ (resp.~$L^2(R_P;\C^Q)$).

If $Zg\in L^\infty(\R^2)$, the following statements hold:
\begin{enumerate}
\item[{\rm (a)}] $(g,\Lambda)$ is a Riesz sequence
                 if and only if $\essinf_{z \in \R^2} \sigma_0(A_g(z)) > 0$.\\[-.4cm]

\item[{\rm (b)}] $(g,\Lambda)$ is a frame sequence
                 if and only if $\essinf_{z \in \R^2} \sigma_1(A_g(z)) > 0$.\\[-.4cm]

\item[{\rm (c)}] $(g,\Lambda)$ is a frame for $L^2(\R)$
                 if and only if $\essinf_{z \in \R^2} \sigma_0(A_g(z)^*) > 0$.
\end{enumerate}
\end{lem}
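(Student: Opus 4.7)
The plan is to treat the four claims of the lemma sequentially: the shift relations for $A_g$, the equivalence of the Bessel property with $Zg\in L^\infty$, the operator identities~\eqref{e:operators}, and the three spectral characterizations (a)--(c).

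First, the shift relations~\eqref{e:shifts} I would verify entry-wise. The $(k,\ell)$-entry of $A_g(x+\tfrac{1}{P},\omega)$ equals $\tfrac{1}{\sqrt P}Zg(x+\tfrac{k+1}{P}-\tfrac{\ell}{Q},\omega)$. For $k<P-1$ this is just the $(k+1,\ell)$-entry of $A_g(x,\omega)$, while for $k=P-1$ it equals $\tfrac{1}{\sqrt P}Zg(x+1-\tfrac{\ell}{Q},\omega)=e^{2\pi i\omega}A_g(x,\omega)_{0,\ell}$ by quasi-periodicity~(a). Comparing with the action of $L_\omega=S_P\diag(e^{2\pi i\omega},1,\dots,1)$ on rows (the factor shifts rows up with wrap-around and tags the wrapped row with $e^{2\pi i\omega}$) yields the first identity. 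The second identity is analogous, using $Zg(y-1,\omega)=e^{-2\pi i\omega}Zg(y,\omega)$ and the action of $M_\omega$ on columns. Since $L_\omega$ and $M_\omega$ are unitary (each is a permutation matrix times a unimodular diagonal), this immediately gives $A_g^*A_g(x+\tfrac{1}{P},\omega)=A_g^*A_g(x,\omega)$ and $A_gA_g^*(x-\tfrac{1}{Q},\omega)=A_gA_g^*(x,\omega)$; the $1$-periodicity in $\omega$ comes from property~(a) with $m=0$.

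Next, I would compute $(\vectorizeOperator Z\synthOP c)(x,\omega)$ directly. Property~(b) gives $Z[\pi(n/Q,Pm)g](x,\omega)=e^{2\pi iPmx}Zg(x-n/Q,\omega-Pm)$, and since $Pm\in\Z$ and $Zg$ is $1$-periodic in $\omega$ this reduces to $e^{2\pi iPmx}Zg(x-n/Q,\omega)$. Writing $n=sQ+\ell$ with $\ell\in\{0,\dots,Q-1\}$ and using quasi-periodicity, $Zg(x-n/Q,\omega)=e^{-2\pi is\omega}Zg(x-\ell/Q,\omega)$. Summing and inserting into $\vectorizeOperator$ turns the expression into
\[
  [\vectorizeOperator Z\synthOP c]_k(x,\omega)
  = \sum_{\ell=0}^{Q-1}A_g(x,\omega)_{k,\ell}\,(\specialFourier c)_\ell(x,\omega),
\]
because the factor $e^{2\pi iPm(x+k/P)}=e^{2\pi iPmx}$ picks up no $k$-dependence and the definition of $e_{s,m}$ absorbs the factor $\sqrt P$. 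Hence $\synthOP=(\vectorizeOperator Z)^{*}M_{A_g}\specialFourier$. Taking adjoints and using unitarity of $\vectorizeOperator Z$ and $\specialFourier$ gives the formulas for $\frameOP=\synthOP\synthOP^{*}$ and $\gramian=\synthOP^{*}\synthOP$. For the Bessel characterization, $(g,\Lambda)$ is Bessel iff $\synthOP$ is bounded iff $M_{A_g}$ is bounded, which by the matrix-multiplication-operator theory of Appendix~\ref{ss:multi} is equivalent to $A_g\in L^\infty$; and $A_g\in L^\infty$ is equivalent to $Zg\in L^\infty$ since the entries of $A_g$ are translates of $Zg$ and conversely every value of $Zg$ appears, up to a quasi-periodic phase, as an entry of some $A_g(x,\omega)$.

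Finally, with the three operator identities in hand, (a)--(c) follow by invoking the spectral characterization of matrix multiplication operators from Appendix~\ref{ss:multi}: $M_{A_g}$ is bounded below iff $\essinf_z\sigma_0(A_g(z))>0$, and has closed range iff $\essinf_z\sigma_1(A_g(z))>0$. Thus $(g,\Lambda)$ is a Riesz sequence iff $\synthOP$ is bounded below, giving~(a); it is a frame sequence iff $\synthOP$ has closed range, giving~(b); it is a frame for $L^2(\R)$ iff $\frameOP$ is boundedly invertible iff $M_{A_gA_g^{*}}$ is, equivalently $\essinf_z\sigma_0(A_g(z)A_g(z)^{*})>0$, i.e.\ $\essinf_z\sigma_0(A_g(z)^{*})>0$, giving~(c).

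The main obstacles are bookkeeping rather than conceptual: correctly matching the cyclic-shift conventions for $S_P,S_Q$ with the way quasi-periodicity produces boundary phases, and the reindexing $n=sQ+\ell$ that aligns the Gabor synthesis with the matrix product $A_g\cdot\specialFourier c$. Everything else reduces to Appendix~\ref{ss:multi} once those index manipulations are in place.
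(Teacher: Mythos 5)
Your overall strategy matches the paper's: verify the shift relations entrywise, compute the Zak transform of the synthesis operator to extract the matrix multiplication structure, read off the Bessel criterion from boundedness of $M_{A_g}$, and reduce (a)--(c) to spectral facts about matrix multiplication operators. The entrywise computation, the reindexing $n=sQ+\ell$, and the observation that the factor $e^{2\pi i Pm(x+k/P)}$ carries no $k$-dependence are all in agreement with the paper's argument.

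There are, however, two places where the proposal glosses over points the paper treats carefully, one of which is a genuine gap.

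First (minor): Lemma~\ref{l:multi} in Appendix~\ref{ss:multi} is stated only for \emph{square} self-adjoint matrix symbols. You cannot directly say ``$M_{A_g}$ is bounded below iff $\essinf_z\sigma_0(A_g(z))>0$'' or ``has closed range iff $\essinf_z\sigma_1(A_g(z))>0$'' as consequences of that appendix, since $A_g$ is $P\times Q$ and not generally square. The argument needs to be routed through $M_{A_g^*A_g}$ (equivalently the Gram operator $\gramian$) or $M_{A_gA_g^*}$ (the frame operator $\frameOP$), exactly as the paper does, using Lemma~\ref{lem:BoundedBelowAStarA} and Lemma~\ref{lem:closed_equis} for the translation between boundedness-from-below/closed-range of $\synthOP$ and spectral conditions on these square products. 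This is a fillable detail, not a conceptual error.

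Second (substantive): you write $\essinf_z$ without specifying the domain of $z$. The multiplication operators act on $L^2(R_P;\CC^Q)$ and $L^2(R_P;\CC^P)$, so Lemma~\ref{l:multi} gives you conditions in terms of $\essinf_{z\in R_P}$, while the lemma to be proved asserts conditions with $\essinf_{z\in\R^2}$. For (a) and (b) this is harmless: $\sigma_0(A_g(z))$ and $\sigma_1(A_g(z))$ depend only on $A_g^*A_g(z)$, which is $(1/P,1)$-periodic with $R_P$ a fundamental domain, so the two essinfs coincide. For (c) the symbol $A_gA_g^*$ is $(1/Q,1)$-periodic, and $R_P$ is a fundamental domain only when $Q\ge P$. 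The direction ``$\essinf_{z\in\R^2}\sigma_0(A_g(z)^*)>0\Rightarrow$ frame'' is still immediate (restricting to $R_P$ can only increase the essinf), but the converse ``frame $\Rightarrow\essinf_{z\in\R^2}>0$'' requires the density inequality $P/Q\le 1$ from \cite[Corollary 7.5.1]{g}, which guarantees $R_Q\subset R_P$ so that $\essinf_{z\in R_P}=\essinf_{z\in\R^2}$. The paper explicitly invokes this; your proposal does not address it, and without it the argument only establishes a statement with $\essinf_{z\in R_P}$, which is a priori weaker than the claim when $P>Q$.
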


\begin{proof}
Let $A := A_g$. We have
$A(x + \tfrac 1P, \omega)
 = P^{-\frac{1}{2}}
   \cdot \big(
           Z g(x + \frac{k+1}{P} - \frac{\ell}{Q}, \omega)
         \big)_{k,\ell = 0}^{P-1,Q-1}$,
where---due to the quasi-periodicity of $Zg$---we see that
\begin{align*}
  &\lefteqn{  Z g \big( x + \tfrac{k+1}{P} - \tfrac{\ell}{Q}, \omega \big) } \\
  &= \begin{cases}
      \sqrt{P} \cdot \big( A(x,\omega) \big)_{k+1,\ell}
      & \text{if } k < P-1 \, , \\
      e^{2 \pi i \omega} Z g (x - \frac{\ell}{Q}, \omega)
      = \sqrt{P} \cdot e^{2 \pi i \omega} \cdot \big( A(x,\omega) \big)_{0,\ell}
       & \text{if } k = P-1 \, .
    \end{cases}
\end{align*}
In matrix notation, this means precisely that $A$ satisfies the first relation in \eqref{e:shifts},
and the $(\tfrac{1}{P}, 1)$-periodicity of $A^*A$ follows from $L_\omega^*L_\omega = \Id_{\C^P}$
and from $A(x,\omega+1) = A(x,\omega)$.
The second relation in \eqref{e:shifts} can be proved similarly
and shows that $AA^*$ is $(\tfrac{1}{Q}, 1)$-periodic.

\medskip{}

Let $\synthOP_0$ denote the pre-synthesis operator of $(g,\Lambda)$, that is,
\[
  \synthOP_0 : \quad
  \ell_0(\Z^2)\to L^2(\R) , \quad
  \synthOP_0(c_{m,n})_{m,n \in \Z}
  := \sum_{m,n \in \Z}
       c_{m,n} \, \pi(\tfrac{m}{Q},nP)g \, ,
\]
where $\ell_0(\Z^2)$ is the space of all elements of $\ell^2(\Z^2)$
with only finitely many non-zero entries.
For $c \in \ell_0(\Z^2)$, the properties of the Zak transform listed after
Equation~\eqref{eq:ZakTransformDefinition} show that
\begin{align*}
  (Z \, \synthOP_0 \, c)(x,\omega)
  & = \sum_{m,n \in \Z}
        c_{m,n} \, Z [ \pi(\tfrac{m}{Q}, nP) \, g ] (x,\omega) \\
  & = P^{-1/2}
      \cdot \sum_{\ell=0}^{Q-1}
              h_\ell(x,\omega) \, Z g(x-\tfrac{\ell}{Q},\omega)
    = \<A(x,\omega)h(x,\omega),e_0\>_{\C^P},
\end{align*}
where $h_\ell(x,\omega) := P^{1/2} \sum_{s,n\in\Z} c_{sQ+\ell,n} \, e^{2\pi i(nPx-s\omega)}$
and $h := (h_\ell)_{\ell=0}^{Q-1} = \specialFourier c$
with $\specialFourier$ defined in Equation~\eqref{eq:PeriodizationOperators}.
Since $h$ is $(\tfrac{1}{P}, 1)$-periodic, we obtain for $k \in \{ 0,\dots,P-1 \}$ that
\[
  (Z \, \synthOP_0 \, c)(x + \tfrac{k}{P}, \omega)
  = \< A(x + \tfrac{k}{P}, \omega) h(x, \omega), e_0\>_{\C^P}
  = \< A(x, \omega) h(x, \omega), e_k \>_{\C^P} \, .
\]
Here, we used the identity $A(x+\tfrac 1P, \omega) = L_\omega A(x,\omega)$ from the beginning of
the proof to get
\[
  \langle A(x+\tfrac{k}{P}, \omega) h(x,\omega), e_0 \rangle
  = \langle L_\omega^k A(x,\omega) h(x,\omega), e_0 \rangle
  = \langle A(x,\omega) h(x,\omega), (L_\omega^\ast)^k e_0 \rangle \, ,
\]
where a straightforward induction shows
${(L_\omega^\ast)^k e_0 = \big( \diag(e^{-2\pi i \omega}, 1,\dots,1) \, S_P^\ast \big)^k e_0 = e_k}$
for $k=0,\dots,P-1$.
With the operator $\vectorizeOperator$ defined in Equation~\eqref{eq:PeriodizationOperators},
we have thus shown
\[
  (\vectorizeOperator Z \synthOP_0 \, c)(x,\omega) = A(x,\omega) h(x,\omega);
  \quad \text{that is,} \quad
  \vectorizeOperator Z \synthOP_0 = M_A \, \specialFourier|_{\ell_0(\Z^2)} \, .
\]
Since the operators $\vectorizeOperator,Z,\specialFourier$ are unitary,
this shows that $\synthOP_0$ is bounded if and only if $M_A$ is bounded,
that is, if and only if each entry of $A$ is essentially bounded (on $R_P$),
which---by quasi-periodicity---exactly means that $Zg\in L^\infty(\R^2)$.
In particular, this shows that $(g,\Lambda)$ is a Bessel sequence
if and only if $\synthOP_0$ is bounded, if and only if $Z g \in L^\infty (\R^2)$.

\medskip{}

Let us assume for the rest of this proof that $Zg\in L^\infty(\R^2)$.
Then ${\vectorizeOperator Z \synthOP = M_A \specialFourier}$,
where $\synthOP = \ol{\synthOP_0} = (\vectorizeOperator Z)^\ast M_A \specialFourier$
is the synthesis operator of $(g,\Lambda)$.
Clearly, ${M_A^\ast = M_{A^\ast}}$ is the (bounded) multiplication operator with $A^\ast$;
thus $M_A^*M_A = M_{A^*A}$ and $M_AM_A^* = M_{AA^*}$.
Since $\frameOP = \synthOP \synthOP^\ast$ and $\gramian = \synthOP^\ast \synthOP$,
this proves \eqref{e:operators}.

\medskip{}

By definition, $(g,\Lambda)$ is a Riesz sequence if and only if
the synthesis operator $\synthOP$ is bounded below.
Lemma~\ref{lem:BoundedBelowAStarA} shows that this holds if and only if
$\bG = \synthOP^\ast \synthOP$ is boundedly invertible, that is, if and only if $0 \in \varrho(\bG)$.
Similarly, $(g,\Lambda)$ is a frame for $L^2(\R)$ if and only if $0 \in \varrho(\bS)$.
Likewise, $(g,\Lambda)$ is a frame sequence
if and only if $(0,\veps_0]\subset\varrho(\gramian)$ for some $\veps_0 > 0$
(see Lemmas~\ref{lem:closed_equis} and \ref{lem:FrameSequenceIffGramianHasClosedRange}).
Hence, $(g,\Lambda)$ is a Riesz sequence if and only if $0\in\varrho(M_{A^*A})$,
a frame sequence if and only if $(0,\veps_0]\subset\varrho(M_{A^*A})$ for some $\veps_0 > 0$,
and a frame for $L^2(\R)$ if and only if $0\in\varrho(M_{AA^*})$.

The statements (a)--(c) now follow from Lemma~\ref{l:multi} (ii) and (iii).
Here, it is used for properties (a) and (b) that $\sigma_i (A_g (z))$ only depends on
$A_g^\ast (z) A_g (z)$, which is $(P^{-1}, 1)$-periodic, so that
$\essinf_{z \in \R^2} \sigma_i (A_g (z)) = \essinf_{z \in R_p} \sigma_i (A_g (z))$
for $i \in \{1,2\}$.
Finally, for property (c), it is used that if $(g,\Lambda)$ is a frame for $L^2(\R)$,
then $P/Q \leq 1$ (see \cite[Corollary~7.5.1]{g}), so that $R_Q \subset R_P$.
This implies
${\essinf_{z \in \R^2} \sigma_0 (A_g^\ast (z)) = \essinf_{z \in R_P} \sigma_0 (A_g^\ast (z))}$,
since $z \mapsto A_g (z) A_g^\ast (z)$ is $(Q^{-1}, 1)$-periodic.
Conversely, if $\essinf_{z \in \R^2} \sigma_0 (A_g^\ast (z)) > 0$,
then we also have ${\essinf_{z \in R_P} \sigma_0(A_g^\ast (z)) > 0}$, so that
$0 \in \varrho(M_{AA^\ast})$ by Lemma~\ref{l:multi}.
\end{proof}

In the next lemma, we derive a formula for the matrix function $A_{\widetilde{g}}$ associated
to the \emph{dual window} $\widetilde{g}$ of the Riesz sequence $(g,\Lambda)$.
This means that---considered on the Gabor space $\calG (g,\Lambda)$---the Gabor system
$(\widetilde{g}, \Lambda)$ is the canonical dual frame to $(g,\Lambda)$.
In the proof of the lemma, we will use that ${\widetilde{g} = \pseudo{\frameOP} g}$
satisfies this property, where $\pseudo{\frameOP}$ is the \emph{pseudo-inverse}
(see \ref{sub:PseudoInverseGeneral}) of the (pre)-frame operator
$\frameOP$ of $(g,\Lambda)$, which is given by
\(
  \frameOP f = \sum_{\lambda \in \Lambda} \langle f, \pi(\lambda) g \rangle \, \pi(\lambda) g
\).
For completeness, we sketch a proof of this fact.
Let ${\frameOP_0 := \frameOP|_{\calG(g,\Lambda)} : \calG(g,\Lambda) \to \calG(g,\Lambda)}$ denote the
restriction of $\frameOP$ to $\calG(g,\Lambda)$.
Note that $\frameOP_0$ is invertible since $(g,\Lambda)$ is a frame for $\calG(g,\Lambda)$,
and that $\calG(g,\Lambda) = \ran \frameOP = (\ker \frameOP)^{\perp}$
since $\frameOP$ is self-adjoint.
Therefore, the pseudo-inverse of $\frameOP$ is given by
$\pseudo{\frameOP} = \frameOP_0^{-1} \bP$,
where $\bP$ denotes the orthogonal projection
from $L^2(\R)$ onto $\calG(g,\Lambda)$.
Hence, $g_0 := \pseudo{\frameOP} g = \frameOP_0^{-1} g$.
Finally, a straightforward but tedious computation shows that
$\pi(\lambda) \frameOP = \frameOP \pi(\lambda)$ for all $\lambda \in \Lambda$,
which also implies that $\pi(\lambda) \calG(g,\Lambda) = \calG(g,\Lambda)$.
Therefore, $\frameOP_0 [\pi(\lambda) g_0] = \pi(\lambda) \frameOP_0 g_0 = \pi(\lambda) g$,
showing that
\({
  \big( \pi(\lambda) g_0 \big)_{\lambda \in \Lambda}
  = \big( \frameOP_0^{-1} (\pi(\lambda) g) \big)_{\lambda \in \Lambda}
}\)
is indeed the canonical dual frame of $(g,\Lambda)$.

With this preparation, we can now prove the announced lemma.

\begin{lem}\label{l:dist}
Let $g\in L^2(\R)$, $P,Q \in \N$, $\Lambda = \tfrac 1Q\Z{\times} P\Z$,
and assume that $(g,\Lambda)$ is a Riesz sequence.
Let $\wt{g}$ be the dual window of $(g,\Lambda)$
and $G := Zg$, $\wt{G} := Z \wt{g}$, $A := A_g$,
and $\wt{A} := A_{\wt{g}}$, with $A_g$ and $A_{\widetilde{g}}$ as in Lemma~\rmref{l:matrix}.
Then
\begin{equation}\label{e:dual2}
  \wt{A} = A(A^*A)^{-1}
  \quad \text{almost everywhere on } \R^2 .
\end{equation}
Moreover, for arbitrary $\mu = (u,\eta)\in\R^2$ we have
\[
  \operatorname{dist}^2\!\big(\pi(\mu)g,\calG(g,\Lambda)\big)
  = \|g\|_{\LTwoIndex}^2
    - \int_0^1
        \int_0^{1/P}
          \left\| H_\mu(x,\omega) \, e_0 \right\|_{\C^P}^2
        \,dx
      \,d\omega \, ,
\]
where $e_0 = (1,0,\ldots,0)^T \in \C^Q$,
\[
  H_\mu(z)
  = P^{1/2}
    \cdot A(z) \cdot \big( A(z)^*A(z) \big)^{-1} \cdot A(z)^*
    \cdot e^{2\pi i\eta D_P}
    \cdot A(z-\mu)
  \in \C^{P {\times} Q} \, ,
\]
and $D_P = \operatorname{diag}(k/P)_{k=0}^{P-1}$, with the notation
$e^{2\pi i\eta D_P}: = \diag \big( (e^{2\pi i \eta k/P})_{k=0}^{P-1} \big)$.
\end{lem}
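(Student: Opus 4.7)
The plan is to treat the two statements separately, using the operator-theoretic identifications from Lemma~\ref{l:matrix} to reduce everything to pointwise matrix computations over $R_P$.

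For the identity $\wt A = A(A^*A)^{-1}$, I would use the standard characterization of the canonical dual of a Riesz sequence: $\synthOP_{\wt g} = \synthOP_g \, \gramian^{-1}$. (This is the operator expressing that $(\pi(\lambda) \wt g)_\lambda$ is biorthogonal to $(\pi(\lambda) g)_\lambda$ inside $\calG(g,\Lambda)$; it is verified by computing $\synthOP_{\wt g}^* \synthOP_g = \gramian^{-1} \synthOP_g^* \synthOP_g = I$ and $\synthOP_{\wt g} \synthOP_g^* = P_{\calG}$.) Since $(g,\Lambda)$ is a Riesz sequence, Lemma~\ref{l:matrix}(a) gives $\essinf \sigma_0(A(z)) > 0$, so $A(z)^*A(z)$ is boundedly invertible a.e.; hence $\gramian^{-1} = \specialFourier^* M_{(A^*A)^{-1}} \specialFourier$. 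Substituting the formula $\synthOP_g = (\vectorizeOperator Z)^* M_A \specialFourier$ from \eqref{e:operators} yields
\[
  \synthOP_{\wt g}
  = (\vectorizeOperator Z)^* M_A M_{(A^*A)^{-1}} \specialFourier
  = (\vectorizeOperator Z)^* M_{A(A^*A)^{-1}} \specialFourier .
\]
Comparing with $\synthOP_{\wt g} = (\vectorizeOperator Z)^* M_{\wt A} \specialFourier$ and invoking unitarity of $\vectorizeOperator Z$ and $\specialFourier$ forces $\wt A = A(A^*A)^{-1}$ almost everywhere.

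For the distance formula, I would start from $\dist^2(\pi(\mu)g, \calG(g,\Lambda)) = \|g\|_{L^2}^2 - \|\bP \pi(\mu)g\|_{L^2}^2$, where $\bP$ is the orthogonal projection onto $\calG(g,\Lambda)$. The key observation is that under the unitary $\vectorizeOperator Z$, this projection becomes fiberwise: since $\calG(g,\Lambda) = \ran \synthOP$ and $\vectorizeOperator Z \synthOP = M_A \specialFourier$ with $\specialFourier$ unitary, we have $\vectorizeOperator Z(\calG(g,\Lambda)) = \ran M_A$, whose fiber over $z$ is $\ran A(z)$. The orthogonal projection in $\CC^P$ onto $\ran A(z)$ equals $\pi_A(z) := A(z)(A(z)^*A(z))^{-1}A(z)^*$, so $\vectorizeOperator Z\, \bP\, (\vectorizeOperator Z)^* = M_{\pi_A}$, giving
\[
  \|\bP \pi(\mu)g\|_{L^2}^2
  = \int_{R_P} \|\pi_A(z) (\vectorizeOperator Z \pi(\mu)g)(z)\|_{\CC^P}^2 \, dz .
\]

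It then remains to evaluate $\vectorizeOperator Z \pi(\mu) g$ explicitly. Using property~(b) of the Zak transform, the $k$-th component at $(x,\omega) \in R_P$ is
\[
  Z[\pi(\mu)g](x+\tfrac{k}{P},\omega)
  = e^{2\pi i \eta (x + k/P)} \, Zg\bigl((x-u) + \tfrac{k}{P},\, \omega - \eta\bigr),
\]
and by the definition of $A_g$ with $\ell = 0$ one has $Zg((x-u) + k/P,\omega-\eta) = \sqrt{P} \cdot A(z - \mu)_{k,0}$. Assembling the components into a vector,
\[
  (\vectorizeOperator Z \pi(\mu) g)(z)
  = \sqrt{P} \, e^{2\pi i \eta x} \, e^{2\pi i \eta D_P} \, A(z-\mu) \, e_0 ,
\]
where $e_0 \in \CC^Q$ is the first basis vector. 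Plugging this into the integral and noting that $|e^{2\pi i \eta x}| = 1$, the scalar factor $\sqrt{P}$ combines with $\pi_A(z)$ to produce exactly $H_\mu(z) = \sqrt{P}\, A(z)(A(z)^*A(z))^{-1}A(z)^* e^{2\pi i \eta D_P} A(z-\mu)$, yielding the desired formula.

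The computations are largely mechanical; the main obstacle is the bookkeeping between the three unitaries $Z$, $\vectorizeOperator$, and $\specialFourier$, in particular keeping track of which component corresponds to the index $\ell = 0$ in the definition of $A_g$, and correctly identifying the fiberwise orthogonal projection onto $\ran A(z)$ (which requires the uniform lower bound on $\sigma_0(A(z))$ so that $(A^*A)^{-1}$ is a bounded measurable matrix function).
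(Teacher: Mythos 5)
Your proof of the distance formula is correct and takes essentially the same route as the paper: you conjugate the projection $\bP$ by $\vectorizeOperator Z$ to identify it with the matrix multiplication operator $M_{\pi_A}$ where $\pi_A(z) = A(z)(A(z)^*A(z))^{-1}A(z)^*$ is the fiberwise range projection, compute $\vectorizeOperator Z\,\pi(\mu)g$ from the Zak covariance formula, and note that the unimodular factor $e^{2\pi i \eta x}$ drops out; this matches what the paper does via $\bP = \bS\bS^\dagger$ and the pseudo-inverse identities.

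Your derivation of \eqref{e:dual2} is, however, slightly different from the paper's and also has a small gap. The paper computes $\widetilde{g} = \pseudo{\frameOP}\,g$ via the continuous functional calculus, gets $\vectorizeOperator\wt G = \pseudo{(AA^*)}(\vectorizeOperator G)$, and from there reads off the columns of $\wt A$; you instead use the operator identity $\synthOP_{\wt g} = \synthOP_g\,\gramian^{-1}$ and compare the two expressions for $\synthOP_{\wt g}$ as multiplication operators, which is a shorter and clean argument. But note that the operators $M_{\wt A}$ and $M_{A(A^*A)^{-1}}$ act on $L^2(R_P;\C^Q)$, so canceling the unitaries only yields $\wt A = A(A^*A)^{-1}$ almost everywhere \emph{on} $R_P$, not on $\R^2$ as the statement claims. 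The paper devotes a substantial portion of the proof (introducing $\wt{\vectorizeOperator}$ and invoking Corollary~\ref{cor:PseudoInverseConjugation}) to pushing the identity from $R_P$ to $\R^2$. You can close this gap by observing that both $A_{\wt g}$ and $A_g(A_g^*A_g)^{-1}$ satisfy the quasi-periodicity relations \eqref{e:shifts}: for the right-hand side, $A(x+\tfrac1P,\omega)(A^*A)^{-1}(x+\tfrac1P,\omega) = L_\omega A(x,\omega)(A^*A)^{-1}(x,\omega)$ because $A^*A$ is $(\tfrac1P,1)$-periodic, and similarly for the $1/Q$-shift using that $M_\omega$ is unitary. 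Since agreement on $R_P$ plus these shift relations generates agreement on $\R^2$, the claimed identity follows, but this observation should be stated explicitly.
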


\begin{proof}
In this proof we shall make use of the notion of the \emph{pseudo-inverse} $\pseudo{T}$
of an operator $T$ with closed range.
For the definition of this notion and a review of some of its properties,
we refer to \ref{sub:PseudoInverseGeneral}.

Let $\bS$ be the frame operator of $(g,\Lambda)$.
Then the range of $\bS$ is $\ran\bS = \calG(g,\Lambda)$, which is closed in $L^2(\R)$.
Hence, by Lemma \ref{l:vphi} we have $\bS^\dagger = \vphi(\bS)$,
where $\varphi : \R \to \R$ is defined by $\varphi(0) = 0$ and $\varphi(t) = 1/t$ for $t \neq 0$.
As seen before the statement of the lemma, $\widetilde{g} = \pseudo{\frameOP} g = \varphi(\frameOP) g$.
Furthermore, Lemma~\ref{l:vphi} shows $\vphi(A(z)A(z)^*) = \pseudo{(A(z)A(z)^*)}$ for every $z\in R_P$.
Hence, an application of Equation~\eqref{e:operators} and of Lemma~\ref{l:multi} (iv) shows that
\[
  \wt{G}
  = Z \wt{g}
  = Z [\vphi(\bS)g]
  = Z (\vectorizeOperator Z)^\ast \vphi(M_{A A^\ast})(\vectorizeOperator Z) g
  = \vectorizeOperator^\ast M_{\vphi(A A^\ast)} \vectorizeOperator G,
\]
with $\vectorizeOperator$ as defined in Equation~\eqref{eq:PeriodizationOperators}.
Therefore,
\begin{equation}
  \vectorizeOperator \wt{G} = \pseudo{(AA^*)}(\vectorizeOperator G)
  \qquad\text{a.e.\ on $R_P$}.
  \label{eq:DualWindowZakOnRectangle}
\end{equation}

In order to extend this relation to $\R^2$, define
\(
  (\wt{\vectorizeOperator} f)(x,\omega)
  := \big( f(x+\tfrac{k}{P}, \omega) \big)_{k=0}^{P-1}
\)
for $f : \R^2 \to \C$ and $(x,\omega) \in \R^2$.
Let $z=(x,\omega)\in R_P$ be arbitrary, and set $z_{n,k} = (x + \tfrac{n+k}{P}, \o)\vphantom{\sum^N}$
for $k \in \{0,\dots,P-1\}$ and $n \in \Z$.
Using Equation~\eqref{e:shifts}, we see that
\begin{align*}
  \widetilde{G} (z_{n,k})
  & = (\, Z \, \widetilde{g} \,) \big( x + \tfrac{n+k}{P}, \omega \big)
    = \sqrt{P} \cdot \Big( A_{\widetilde{g}} \big( x + \tfrac{n}{P}, \omega \big) \Big)_{k,0} \\
  & = \sqrt{P} \cdot \Big( L_\omega^n \, A_{\widetilde{g}} (x,\omega) \Big)_{k,0}
    = \Big( L_\omega^n \, \big( \widetilde{G} (z_{0,\ell}) \big)_{\ell=0}^{P-1} \Big)_k \, .
\end{align*}
Similarly, Equation~\eqref{e:shifts} shows that
$\big( G(z_{n,k}) \big)_{k=0}^{P-1} = L_\omega^{n} \big( G(z_{0,k}) \big)_{k=0}^{P-1}$.
Thus, we get for $(x,\omega) \in R_P$ and $n \in \Z$ that
\begin{align*}
  (\widetilde{\vectorizeOperator} \widetilde{G}) (x + \tfrac{n}{P}, \omega)
  &= \left(\wt{G} (z_{n,k})\right)_{k=0}^{P-1}
   = L_\o^n \Big[ \left(\wt{G} (z_{0,k})\right)_{k=0}^{P-1} \Big]
   = L_\omega^n \big( [\vectorizeOperator \widetilde{G} ] (x,\omega) \big) \\
  ({\scriptstyle{\text{Eq.~}\eqref{eq:DualWindowZakOnRectangle}}})
  &= L_\o^n
     \pseudo{\big[
              A(x,\omega) A(x,\omega)^*
            \big]}
     \big( G(z_{0,k}) \big)_{k=0}^{P-1} \\
  ({\scriptstyle{\text{Eq.~}\eqref{e:shifts}}})
  &= L_\o^n
     \left[
       L_\o^{-n} A(x + \tfrac{n}{P}, \omega) A(x + \tfrac{n}{P}, \omega)^* L_\o^n
     \right]^\dagger
     L_\o^{-n}
     \big(
       G(z_{n,k})
     \big)_{k=0}^{P-1} \\
  ({\scriptstyle{\text{Corollary } \ref{cor:PseudoInverseConjugation}}})
  &= \pseudo
     {
       \big[
         A(x + \tfrac{n}{P}, \omega) A(x + \tfrac{n}{P}, \omega)^*
       \big]
     }
     \big(G(z_{n,k})\big)_{k=0}^{P-1} \\
  &= \pseudo
     {
       \big[
         A(x + \tfrac{n}{P}, \omega) A(x + \tfrac{n}{P}, \omega)^*
       \big]
     }
     (\widetilde{\vectorizeOperator} G) (x + \tfrac{n}{P}, \omega) \, .
\end{align*}
In combination with the $1$-periodicity in the second variable of all involved functions,
this implies
\[
  \wt{\vectorizeOperator} \wt{G} = \pseudo{(AA^\ast)} (\wt{\vectorizeOperator} G)
  \qquad\text{a.e.\ on $\R^2$}.
\]
Since $(\wt{\vectorizeOperator} G)(x-\tfrac{\ell}{Q}, \o)$ is the $\ell$-th column
of the matrix $\sqrt{P} \cdot A(x,\o)$,
since $(\widetilde{\vectorizeOperator} \widetilde{G}) (x - \tfrac{\ell}{Q}, \omega)$ is
the $\ell$-th column of $\sqrt{P} \, \widetilde{A}(x,\omega)$,
and because $AA^*$ is $(\tfrac{1}{Q}, 1)$-periodic,
we obtain the identity $\wt{A} = \pseudo{(AA^*)} A = A \pseudo{(A^\ast A)} = A (A^*A)^{-1}$,
see Lemma~\ref{lem:TechnicalPseudoInverseIdentities} (iv).
Here, we used that $A^\ast A$ is invertible almost everywhere by
Lemma~\ref{l:matrix} (a). We have thus proved Equation~\eqref{e:dual2}.

\medskip{}

Now, denote the orthogonal projection from $L^2(\R)$ onto $\calG(g,\Lambda) = \ran \frameOP$ by $\bP$.
Then, for any $\mu = (u,\eta)\in\R^2$ we have
\[
  \operatorname{dist}^2\!\big(\pi(\mu)g,\calG(g,\Lambda)\big)
  = \|(I-\bP)\pi(\mu)g\|_{\LTwoIndex}^2
  = \|g\|_{\LTwoIndex}^2 - \|\bP \, \pi(\mu) g \|_{\LTwoIndex}^2 \, .
\]
Next, Lemmas~\ref{l:vphi} and \ref{l:multi} show
\({
  \pseudo{M_{A_g A_g^\ast}}
  = \varphi(M_{A_g A_g^\ast})
  = M_{\varphi(A_g A_g^\ast)}
  = M_{\pseudo{(A_g A_g^\ast)}}
  ,
}\)
which implies
\(
  \pseudo{\frameOP}
  = (\vectorizeOperator Z)^\ast \pseudo{M_{A_g A_g^\ast}} \vectorizeOperator Z
  = (\vectorizeOperator Z)^\ast M_{\pseudo{(A_g A_g^\ast)}} \vectorizeOperator Z
\)
thanks to Equation~\eqref{e:operators} and Corollary~\ref{cor:PseudoInverseConjugation}.
Now, Lemma~\ref{lem:TechnicalPseudoInverseIdentities} shows $\bP = \bS \pseudo{\bS}$.
Hence, Equations~\eqref{e:operators} and \eqref{e:range_identities} show
\[
  \bP
  \!=\! (\vectorizeOperator Z)^\ast M_{AA^*} M_{(AA^*)^\dagger} (\vectorizeOperator Z)
  = (\vectorizeOperator Z)^{\ast} M_{(AA^{*})(AA^{*})^\dagger} (\vectorizeOperator Z) 
  = (\vectorizeOperator Z)^\ast M_{P_{\ran A}} \! (\vectorizeOperator Z)
\]
and
\(
  P_{\ran A}
  = P_{\ran (A A^\ast)}
  = (A A^\ast) \, \pseudo{(A A^\ast)}
  = A \pseudo{(A^\ast A)} A^\ast
  = A (A^\ast A)^{-1} A^\ast.
\)
For arbitrary $f\in L^2(\R)$, we thus see that
\[
  \|\bP f\|_{\LTwoIndex}^2
  = \|M_{P_{\ran A}} \vectorizeOperator Zf\|_{L^2(R_P, \C^P)}^2
  = \int_0^1
      \int_0^{1/P}
        \|A(A^*A)^{-1}A^* \vectorizeOperator Zf\|_{\C^P}^2
      \,dx
    \,d\omega \, .
\]
Finally, since $Z [\pi(\mu) g] (x,\o) = e^{2\pi i\eta x} Zg(x-u,\omega-\eta)$
for $\mu = (u,\eta)$, we see
\begin{align*}
  (\vectorizeOperator Z [\pi(\mu) g]) (x,\omega)
  &= \big(
       e^{2\pi i\eta (x + \frac{k}{P})}
       Z g (x + \tfrac{k}{P} - u, \omega-\eta)
     \big)_{k=0}^{P-1} \\
  &= e^{2\pi i\eta x}
     e^{2\pi i\eta D_P}
     \big(
       Zg(x + \tfrac{k}{P} -u, \omega-\eta)
     \big)_{k=0}^{P-1} \\
  &= e^{2\pi i\eta x} \,
     P^{1/2} \,
     e^{2\pi i\eta D_P}
     A(x-u,\o-\eta) \, e_0 \, .
\end{align*}
Now, the claim follows from $|e^{2\pi i\eta x}|=1$.
\end{proof}

In proving the next result, we crucially use that if $\lambda = (\alpha,\beta) \in \R^2$
and ${\mu = (a,b) \in \R^2}$, then
$\pi(\lambda) \pi(\mu) f = e^{-2 \pi i \alpha b} \pi(\lambda + \mu) f$,
as can be verified by a direct calculation.
In particular, this implies
$\| T \pi(\lambda)\pi(\mu) f\|_{\LTwoIndex} = \| T \pi(\lambda + \mu) f\|_{\LTwoIndex}$
for any linear operator ${T : L^2(\R) \to L^2(\R)}$.

\begin{lem}\label{l:commute}
Let $g \in L^2(\R)$ and let $\Lambda\subset\R^2$ be a lattice.
If $\bP$ denote the orthogonal projection from $L^2(\R)$ onto $\calG(g,\Lambda)$.
Then $\bP$ commutes with the operators $\pi(\la)$, $\la\in\Lambda$.
In particular
\[
  \dist \big(\pi(\mu+\la)f, \calG(g,\Lambda) \big)
  = \dist \big(\pi(\mu)f, \calG(g,\Lambda) \big)
  \qquad \forall \, \mu\in\R^2, f \in L^2()\R, \lambda \in \Lambda.
\]
\end{lem}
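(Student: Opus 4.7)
The plan is to establish the commutation $\bP \pi(\la) = \pi(\la) \bP$ for all $\la \in \Lambda$ by showing that both $\calG(g,\Lambda)$ and its orthogonal complement are invariant under each $\pi(\la)$, and then to deduce the distance identity by exploiting unitarity of $\pi(\la)$.

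First I would verify invariance of $\calG(g,\Lambda)$ under $\pi(\la)$ for $\la \in \Lambda$. By the composition rule stated just before the lemma, for $\la = (\alpha,\beta) \in \Lambda$ and $\mu \in \Lambda$,
\[
  \pi(\la) \pi(\mu) g = e^{-2\pi i \alpha b_\mu} \, \pi(\la + \mu) g,
\]
where $b_\mu$ is the second coordinate of $\mu$. Since $\la + \mu \in \Lambda$, this vector lies in $(g,\Lambda)$ up to a unimodular scalar. Hence $\pi(\la)$ sends the linear span of $(g,\Lambda)$ into itself; since $\pi(\la)$ is bounded (in fact unitary), it preserves the closure $\calG(g,\Lambda)$.

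Next I would observe that $\pi(\la)^{-1} = \pi(-\la)$ up to a unimodular constant, so by the same argument $\pi(\la)^{-1}$ also preserves $\calG(g,\Lambda)$; equivalently, $\pi(\la)$ restricts to a unitary bijection on $\calG(g,\Lambda)$. Because $\pi(\la)$ is unitary on $L^2(\R)$ and preserves $\calG(g,\Lambda)$, it also preserves $\calG(g,\Lambda)^{\perp}$. Writing $f = \bP f + (I - \bP) f$ and applying $\pi(\la)$, the two pieces land in $\calG(g,\Lambda)$ and $\calG(g,\Lambda)^{\perp}$ respectively, so by uniqueness of the orthogonal decomposition $\bP \pi(\la) f = \pi(\la) \bP f$. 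This yields the commutation relation.

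For the distance statement, using $\pi(\la+\mu) f = e^{2\pi i \alpha b} \pi(\la) \pi(\mu) f$ (where $\mu = (a,b)$ and $\la = (\alpha,\beta)$), I would compute
\[
  \dist \big( \pi(\mu+\la) f, \calG(g,\Lambda) \big)
  = \| (I - \bP) \pi(\mu + \la) f \|_{\LTwoIndex}
  = \| (I - \bP) \pi(\la) \pi(\mu) f \|_{\LTwoIndex},
\]
using that $|e^{2\pi i \alpha b}| = 1$. Since $\bP$ commutes with $\pi(\la)$, so does $I - \bP$, and unitarity of $\pi(\la)$ then gives
\[
  \| (I - \bP) \pi(\la) \pi(\mu) f \|_{\LTwoIndex}
  = \| \pi(\la) (I - \bP) \pi(\mu) f \|_{\LTwoIndex}
  = \| (I - \bP) \pi(\mu) f \|_{\LTwoIndex}
  = \dist \big( \pi(\mu) f, \calG(g,\Lambda) \big).
\]
There is no real obstacle here; the only thing one must be careful about is tracking the unimodular phase factor that appears when swapping time-frequency shifts, but since all statements involve either linear spans (absorbing scalars) or norms (absorbing unimodular constants), this causes no trouble.
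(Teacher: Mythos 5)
Your proposal is correct and follows essentially the same line of reasoning as the paper: establish invariance of $\calG(g,\Lambda)$ under $\pi(\la)$ and $\pi(-\la)$ (using the composition rule for time-frequency shifts), deduce invariance of the orthogonal complement, obtain the commutation via the orthogonal decomposition, and then derive the distance identity from unitarity of $\pi(\la)$ together with the unimodular phase in $\pi(\la)\pi(\mu) = e^{-2\pi i \alpha b}\pi(\la+\mu)$. The only cosmetic difference is that you phrase the complement-invariance step in terms of $\pi(\la)$ being a unitary bijection of $\calG$ onto itself, whereas the paper argues directly via $\langle \pi(\la)f, h\rangle = \langle f, \pi(-\la)h\rangle$; these are equivalent formulations of the same observation.
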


\begin{proof}
Let $\calG := \calG(g,\Lambda)$.
Lemma~\ref{lem:GaborSpaceInvariance} shows $\pi(-\lambda) \calG \subset \calG$
for all $\lambda \in \Lambda$.
This implies $\pi(\la)\calG^\perp\subset\calG^\perp$:
Indeed, $\<\pi(\la)f,h\> = e^{2 \pi i \lambda_1 \lambda_2} \<f,\pi(-\la)h\> = 0$
for $f\in\calG^\perp$ and $h\in\calG$.
Now, if $f \in L^2(\R)$, then we can write $f = f_1 + f_2$ with $f_1 \in \calG$
and $f_2 \in \calG^\perp$;
hence, $\pi(\lambda) f = \pi(\lambda) f_1 + \pi(\lambda) f_2$ with $\pi(\lambda) f_1 \in \calG$
and $\pi(\lambda) f_2 \in \calG^{\perp}$,
which implies $\bP [\pi(\la)f] = \pi(\la)f_1 = \pi(\la)[\bP f]$.

As to the ``in particular''-part, we observe for $\mu \in \R^2$ and $\lambda \in \Lambda$ that
\begin{align*}
  \|(I-\bP)\pi(\mu+\la)f\|_{\LTwoIndex}
  & = \|(I-\bP)\pi(\la)\pi(\mu)f\|_{\LTwoIndex} \\
  & = \|\pi(\la)(I-\bP)\pi(\mu)f\|_{\LTwoIndex}
    = \|(I-\bP)\pi(\mu)f\|_{\LTwoIndex} \,.
\end{align*}
The claim now follows by noting $\dist(f, \calG) = \|(I-\bP)f\|_{\LTwoIndex}$ for $f\in L^2(\R)$.
\end{proof}

\subsection{Describing the regularity of \texorpdfstring{$g$}{g} via the Zak transform}%

The following lemma is probably folklore.
However, since we could not find any reference for it
(one direction is proved in \cite[Proof of Thm.\ 2.3]{d}), we give a full proof here.
Recall that $\HH^1(\R) = \{ f \in H^1(\R) \colon \widehat{f} \in H^1(\R) \}$.

\begin{lem}\label{l:gZg}
Let $g\in L^2(\R)$.
Then $g\in\HH^1(\R)$ if and only if $Zg\in H^1_{\rm loc}(\R^2)$.
In this case, the weak derivatives of $Z g$ are given by
\begin{equation}\label{e:diff_zak}
  \partial_1 Zg = Z(g')
  \quad \text{and} \quad
  \partial_2 Z g(x, \omega) = -2\pi i \big( [Z(Xg)](x,\omega) - x \cdot Zg(x,\omega) \big) \, .
\end{equation}
\end{lem}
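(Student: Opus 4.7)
The plan is to establish both directions using the unitarity of the Zak transform $Z : L^2(\R) \to L^2((0,1)^2)$ together with its quasi-periodicity; the forward direction goes through a Schwartz approximation, while the converse is handled directly via the inversion formula~(e) and Fubini.

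\emph{Forward direction.} Assume $g \in \HH^1(\R)$, equivalently $g, g', Xg \in L^2(\R)$. Since $\HH^1(\R)$ coincides with the form domain of the harmonic oscillator $-\partial^2 + X^2$, truncated Hermite expansions furnish Schwartz approximants $(g_n)$ with $g_n \to g$, $g_n' \to g'$, and $Xg_n \to Xg$ in $L^2(\R)$. For Schwartz $g_n$, the defining series of $Zg_n$ and its termwise derivatives converge absolutely and uniformly on compact sets, so $Zg_n \in C^\infty(\R^2)$; a pointwise computation using $k g_n(x-k) = x g_n(x-k) - (Xg_n)(x-k)$ yields \eqref{e:diff_zak} for each $g_n$. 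By unitarity, $Zg_n \to Zg$, $Z(g_n') \to Z(g')$, $Z(Xg_n) \to Z(Xg)$, and $x Zg_n \to x Zg$ in $L^2((0,1)^2)$; quasi-periodicity (property~(a)) upgrades each of these to $L^2_{\mathrm{loc}}(\R^2)$-convergence. Closedness of the weak derivative then gives $Zg \in H^1_{\mathrm{loc}}(\R^2)$ together with \eqref{e:diff_zak}.

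\emph{Reverse direction.} Assume $Zg \in H^1_{\mathrm{loc}}(\R^2)$. Differentiating property~(a) shows that both $\partial_1 Zg$ and $F(x,\omega) := x Zg(x,\omega) + \tfrac{i}{2\pi}\,\partial_2 Zg(x,\omega)$ are themselves quasi-periodic; since both lie in $L^2((0,1)^2)$, unitarity of $Z$ produces $h_1, h_2 \in L^2(\R)$ with $Z h_1 = \partial_1 Zg$ and $Z h_2 = F$ a.e.\ on $\R^2$. Property~(e) gives, for a.e.\ $x \in \R$,
\begin{equation*}
  h_2(x)
  = x \int_0^1 Zg(x,\omega)\,d\omega
    + \frac{i}{2\pi}\int_0^1 \partial_2 Zg(x,\omega)\,d\omega
  = x\,g(x),
\end{equation*}
where the second integral vanishes because Fubini places $\omega \mapsto Zg(x,\omega)$ in $H^1(0,1)$ for a.e.\ $x$ and property~(a) with $m=0,\,n=1$ makes this slice $1$-periodic, so the one-dimensional FTC contributes zero. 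Hence $Xg = h_2 \in L^2(\R)$, i.e., $\widehat g \in H^1(\R)$. For $h_1 = g'$, test against $\phi \in C_c^\infty(\R)$: combining $g(x) = \int_0^1 Zg(x,\omega)\,d\omega$ with Fubini and slice-wise integration by parts in $x$ (valid since $x \mapsto Zg(x,\omega) \in H^1_{\mathrm{loc}}(\R)$ with weak derivative $\partial_1 Zg(\cdot,\omega)$ for a.e.\ $\omega$) yields $\int g\,\phi'\,dx = -\int h_1\,\phi\,dx$, so $g' = h_1 \in L^2(\R)$. Thus $g \in \HH^1(\R)$, and the identities $Z h_1 = \partial_1 Zg$ and $Z h_2 = F$ rearrange to \eqref{e:diff_zak}.

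The only mildly subtle step is the slicewise $H^1$-regularity of $Zg$, which is immediate from $Zg \in H^1_{\mathrm{loc}}(\R^2)$ via Fubini; the remainder amounts to bookkeeping around the unitarity and quasi-periodicity of the Zak transform, so no step poses a fundamental obstacle.
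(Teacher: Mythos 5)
Your proof is correct. The forward direction is close in spirit to the paper's, which also approximates by smooth functions and checks the weak-derivative identity via a limit (the paper uses $C_c^\infty$ approximants with a separate footnoted argument to also get $X\varphi_n \to Xg$, whereas you use Hermite truncations in the harmonic-oscillator form domain, which yields $\varphi_n \to g$, $\varphi_n' \to g'$, and $X\varphi_n \to Xg$ simultaneously — a neat way to avoid the footnote). The formulation via closedness of the weak derivative is equivalent to the paper's direct pairing against test functions.

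The reverse direction is where you take a genuinely different and in some ways cleaner route. The paper argues from the inversion formulas of property (e): using a representative with the AC-property (Lemma~\ref{l:slices}), it shows $g$ and $\widehat g$ are locally absolutely continuous and then verifies $g', \widehat g\,' \in L^2(\R)$ by a Parseval computation in Fourier series on the unit cell. You instead observe that $\partial_1 Zg$ and $F := x\,Zg + \tfrac{i}{2\pi}\partial_2 Zg$ are quasi-periodic $L^2$ functions on the unit cell (the $x\,Zg$ term exactly cancels the inhomogeneity picked up by $\partial_2 Zg$ under property (a)), hence are Zak transforms of some $h_1, h_2 \in L^2(\R)$. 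The inversion formula, slicewise periodicity, and a Fubini-plus-integration-by-parts step then identify $h_2 = Xg$ and $h_1 = g'$. This is more structural: rather than rebuilding $g$ and $\widehat g$ separately, you extract both $g'$ and $Xg$ from $Zg$ in one stroke and never need to unwind the $e^{-2\pi i x\omega}$ twist from property (d). What the paper's route buys is that it makes the $\R^2$ null-set bookkeeping fully explicit via Lemma~\ref{l:slices}; the one spot you wave at is exactly this slicing step (needed twice: $\omega$-slices in $H^1(0,1)$ for a.e.~$x$ to kill $\int_0^1 \partial_2 Zg\,d\omega$, and $x$-slices in $H^1_{\mathrm{loc}}(\R)$ for a.e.~$\omega$ to integrate by parts). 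Since you flag it and it is the standard Fubini-for-Sobolev result that the paper proves as Lemma~\ref{l:slices}, there is no real gap — but in a polished write-up you would want to cite or reproduce that lemma rather than call it "immediate."
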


\begin{proof}
``$\Rightarrow$:''
Assume that $g \in \HH^1(\R)$ and let $V \subset \R^2$ be nonempty, open, and bounded.
Let us first assume that $g\in C_c^\infty(\R)$ (such a function of course is in $\HH^1(\R)$).
Recalling the definition \eqref{eq:ZakTransformDefinition} of the Zak transform,
we see that on $V$, $Z g$ is defined by a \emph{finite} sum (hence $Zg\in C^\infty(V)$),
and the first relation in \eqref{e:diff_zak} is easily verified.
For the second relation, we note
\begin{align*}
  \partial_2 [Zg (x,\omega)]
  & = \sum_{k=-\infty}^\infty
        \partial_\omega [e^{2\pi i k \omega} \, g (x - k)]
    = \sum_{k=-\infty}^\infty
        2 \pi i k \, e^{2\pi i k \omega} \, g (x - k) \\
  & = \sum_{k=-\infty}^\infty
        e^{2\pi i k \omega} [2 \pi i x \, g(x-k) - 2\pi i (Xg)(x - k)] \\
  & = 2 \pi i \cdot \big[ x \cdot Z g (x,\omega) - Z[Xg] (x,\omega) \big] \, ,
\end{align*}
as claimed in \eqref{e:diff_zak}.

Now, let $g \in \HH^1(\R)$ be arbitrary.
Since $C_c^\infty(\R)$ is dense in $H^1(\R)$
(see for instance \cite[Section~E10.8]{AltLinearFunctionalAnalysis}),
we find a sequence $(\vphi_n)_{n \in \N}\subset C_c^\infty(\R)$ which converges to $g$ in $H^1(\R)$,
that is, $\vphi_n\to g$ and $\vphi_n'\to g'$ in $L^2(\R)$.
For $\phi\in C_c^\infty(V)$ we then have (with $\<\cdot,\cdot\> := \<\cdot,\cdot\>_{L^2(V)}$)
\begin{align*}
  \left|\left\<Zg,\partial_1\phi\right\> \right.&\left.+ \left\<Zg',\phi\right\>\right|\\
  &\le
  \left|\left\<Z(g-\vphi_n),\partial_1\phi\right\>\right|
  + \left|\left\<Z\vphi_n,\partial_1\phi\right\> + \left\<Z\vphi_n',\phi\right\>\right|
  + \left|\left\<Z(g' - \vphi_n'),\phi\right\>\right|.
\end{align*}
The middle term vanishes by partial integration and since $\partial_1 (Z \varphi_n) = Z(\varphi_n ')$;
the other two terms tend to zero as $n \to \infty$.
Hence, $\<Zg,\partial_1\phi\> = -\<Zg',\phi\>$.

The relation $\<Zg,\partial_2\phi\> = 2\pi i\<Z(Xg) - X Zg,\phi\>$ is proven similarly,
by noting that since $(1 + |X|) \, g \in L^2(\R)$, one can find%
\footnote{Indeed, given $\eps > 0$, there is a compactly supported
$h$ such that ${\|(1 + |X|) \, (g - h)\|_{\LTwoIndex} < \eps}$, say $\supp h \subset [-N,N]$.
Pick $\varphi \in C_c^\infty (\R)$ satisfying
$\|\varphi - h\|_{\LTwoIndex} < \eps/(1 + 2N)$ and ${\supp \varphi \subset [-2N, 2N]}$, whence
$\|(1 + |X|) \, (\varphi - h)\|_{\LTwoIndex} \leq (1 + 2N) \, \|\varphi - h\|_{\LTwoIndex} < \eps$,
so that finally $\|(1 + |X|) \, (g - \varphi) \|_{\LTwoIndex} < 2 \eps$.}
a sequence $(\varphi_n)_{n \in \N} \subset C_c^\infty (\R)$ satisfying
${\| (1 + |X|) \, (g - \varphi_n)\|_{\LTwoIndex} \to 0}$,
whence $\varphi_n \to g$ in $L^2$ and $X \varphi_n \to X g$ in $L^2$,
and therefore $Z(X \varphi_n) \to Z(X g)$ and $X \, Z \varphi_n \to X \, Zg$ with convergence in
$L_{\mathrm{loc}}^2 (\R^2)$.

Because of $Z g' \in L^2 (V)$ and $Z(Xg) - X Zg \in L^2(V)$,
this proves that ${Zg \in H^1(V)}$ and that \eqref{e:diff_zak} holds on $V$.
Since $V \subset \R^2$ was an arbitrary non-empty, open, bounded set,
we have proved one implication.

\medskip{}

``$\Leftarrow$:'' Assume that $G := Zg \in H^1_{\rm loc}(\R^2)$.
Lemma~\ref{l:slices} shows that, after changing $G$ on a null-set,
we can assume that $G(x,\cdot)$ is locally absolutely continuous
on $\R$ with derivative $(\partial_2 G)(x, \cdot)\in L^2_{\rm loc}(\R)$ for almost every $x \in \R$
and \emph{simultaneously} that $G(\cdot, \omega)$ is locally absolutely continuous on $\R$
with derivative $(\partial_1 G)(\cdot, \omega)\in L^2_{\rm loc}(\R)$
for almost every $\omega \in \R$.

According to the properties of the Zak transform, $g(x) = \int_0^1 G(x,\omega) \, d \omega$
for almost all $x \in \R$; see the list of properties below Equation~\eqref{eq:ZakTransformDefinition}.
Let us fix one $x_0 \in \R$ for which this is true.
Hence, for almost all $x \in \R$ we have
\begin{align*}
  g(x)
  &= \int_0^1
       G(x,\omega)
     \,d\omega
   = \int_0^1
       \left(
         G(x_0,\omega)
         + \int_{x_0}^x
             \partial_1G(t,\omega)
           \,dt
       \right)
     \,d\omega\\
  &= g(x_0) + \int_{x_0}^x
                \left(
                  \int_0^1
                    \partial_1G(t,\omega)
                  \,d\omega
                \right)
              \,dt
   = g(x_0) + \int_{x_0}^x \phi(t) \, d t \, ,
\end{align*}
where $\phi(t) := \int_0^1\partial_1G(t,\omega)\,d\omega$.
Note that $\phi\in L^1_{\rm loc}(\R)$ since ${\partial_1 G \in L^2_{\rm loc}(\R^2)}$.
Hence, possibly after redefining $g$ on a set of measure zero, $g$ is locally
absolutely continuous on $\R$.
To see that actually $\phi\in L^2(\R)$ (and hence $g\in H^1(\R)$),
recall from the properties of the Zak transform that
$G(t + n, \omega) = e^{2\pi i n \omega} G(t,\omega)$ for almost all $(t,\omega) \in \R^2$.
Hence,
\begin{align*}
  \| \phi \|_{L^2}^2
   = \sum_{n\in\Z}
       \int_0^1
         \left|
           \int_0^1 \!\!
             \partial_1G(t+n,\omega)
           \,d\omega
         \right|^2
       \!\! dt 
   = \int_0^1
       \sum_{n\in\Z}
         \left|
           \int_0^1
             e^{2\pi in\omega}
             \partial_1 G(t,\omega)
           \,d\omega
         \right|^2
     \!\! dt .
\end{align*}
Now, set $g_t(\omega) := \partial_1G(t,\omega)$ (which is in $L^2 ( (0,1) )$ for a.e.\ $t\in\R$).
Then
\begin{align*}
  \int_\R|\phi(t)|^2\,dt
  = \int_0^1\sum_{n\in\Z}\left|\wh{g_t}(n)\right|^2\,dt
  = \int_0^1\int_0^1|g_t(\omega)|^2\,d\omega\,dt
  = \|\partial_1G\|_{L^2([0,1]^2)}^2.
\end{align*}
Hence, $g\in H^1(\R)$ with $g'(x) = \int_0^1\partial_1 G(x,\omega)\,d\omega$.

\medskip{}

To see that also $\widehat{g} \in H^1(\R)$, define
$F : \R^2 \to \C, (x,\omega) \mapsto e^{-2 \pi i x \omega} G(x,\omega)$.
Since $G_x := G(x, \cdot)$ is locally absolutely continuous for almost all $x \in \R$,
the product rule for Sobolev functions
(see for instance \cite[Section~4.25]{AltLinearFunctionalAnalysis})
shows that also $F_x := F(x, \cdot)$ satisfies this property.
Moreover, the product rule also shows for almost all $x \in \R$ that we have
\begin{align*}
  F_x ' (\omega)
  & = e^{-2 \pi i x \omega} ( -2\pi i x G_x (\omega) + G_x ' (\omega) ) \\
  & = e^{-2 \pi i x \omega} \big( -2\pi i x \, G (x, \omega) + 
   (\partial_2 G) (x,\omega) \big)
    =: H(x,\omega)
\end{align*}
for almost all $\omega \in \R$.
Note that $H \in L_{\mathrm{loc}}^2 (\R^2)$, since $G \in H^1_{\mathrm{loc}}(\R^2)$.
This easily implies that the function $\psi : \R \to \C, \omega \mapsto \int_0^1 H(x,\omega) \, d x$,
is almost everywhere well-defined and satisfies $\psi \in L^1_{\mathrm{loc}}(\R)$.


Next, recall the inversion formula of the Zak transform 
(see the list of properties below Equation~\eqref{eq:ZakTransformDefinition}), stating
$\widehat{g} (\omega) = \int_0^1 e^{-2\pi ix\omega} G(x,\omega) \,d x = \int_0^1 F(x,\omega) \, d x$
for almost all $\omega \in \R$.
Fix some $\omega_0 \in \R$ for which this holds, and note for almost all $\omega \in \R$ that
\[
  \widehat{g}(\omega)
  = \int_0^1 F_x (\omega) \, d x
  = \int_0^1 \Big( F_x (\omega_0) + \int_{\omega_0}^\omega F_x ' (\gamma) \, d \gamma \Big) \, d x
  = \widehat{g}(\omega_0) + \int_{\omega_0}^{\omega} \psi(\gamma) \, d \gamma \, .
\]
Hence---possibly after changing $\widehat{g}$ on a null-set---we see that $\widehat{g}$ is locally
absolutely continuous, with $\widehat{g}'(\omega) = \psi(\omega)$, so that it remains to show
$\psi \in L^2(\R)$.


To see this, note for arbitrary $n \in \Z$ that
${G_x (\omega \!+\! n) \!=\! Zg (x, \omega \!+\! n) \!=\! Zg (x,\omega) \!=\! G_x (\omega)}$,
and hence also $G_x ' (\omega + n) = G_x ' (\omega)$, which finally implies
for almost all $x \in \R$ that $H(x,\omega+n) = e^{-2 \pi i n x} H(x,\omega)$
for almost all $\omega \in \R$.
Therefore, we see for any $n \in \Z$ that
$\psi(\omega + n) = \int_0^1 e^{-2\pi i n x} H(x,\omega) \, dx = \widehat{H_\omega}(n)$,
where $H_\omega$ is defined by $H_\omega (x) := H(x,\omega)$ for $x \in [0,1]$,
so that $H_\omega \in L^2([0,1])$ for almost all $\omega \in \R$.
Thus, we finally arrive at
\begin{align*}
  \int_{\R} |\psi(\omega)|^2 \, d \omega
  & = \sum_{n \in \Z}
        \int_0^1
          |\psi(\omega + n)|^2
        \, d \omega
    = \int_0^1
        \sum_{n \in \Z}
          |\widehat{H_\omega} (n)|^2
       \, d \omega \\
  & = \int_0^1 \|H_\omega\|_{\LTwoIndex}^2 \, d \omega
    = \int_0^1 \int_0^1 |H(x,\omega)|^2 \, d x \, d \omega
    < \infty \, .
    \qedhere
\end{align*}
\end{proof}

\subsection{Symplectic operators and the regularity of the dual window}%

In this subsection, we show that if $(g,\Lambda)$ is a Riesz sequence
with $g \in \HH^1(\R)$, then the canonical dual window $\widetilde{g}$
belongs to $\HH^1 (\R)$ as well.

For proving this---and also several other results---we shall make use of so-called
\emph{symplectic operators} to generalize statements involving lattices of the form
$Q^{-1}\Z{\times} P\Z$, $P,Q\in\N$,
to general lattices of rational density.
To explain this, let $\Lambda \subset \R^2$ be such a general lattice of rational density.
Then there exists a matrix $B \in \R^{2 {\times} 2}$ with $\det B = 1$ such that
$B\Lambda = Q^{-1}\Z{\times} P\Z$ with $P,Q\in\N$ co-prime.
Indeed, we have $\Lambda = A \Z^2$ for some $A \in \R^{2 {\times} 2}$
with $\det A \in \Q \backslash \{0\}$, that is, $|\det A| = P/Q$ for some co-prime $P,Q \in \N$.
Now define $B_0 := |\det A|^{1/2} \cdot A^{-1}$ if $\det A > 0$,
and if instead $\det A < 0$, then let $B_0 := |\det A|^{1/2} \cdot \diag (-1, 1) \cdot A^{-1}$.
It is not hard to check that $\det B_0 = 1$, and that $B_0 \Lambda = |\det A|^{1/2} \, \Z^2$.
Thus, the matrix $B := \diag \big( (P Q)^{-1/2}, (P Q)^{1/2} \big) \, B_0$
satisfies $\det B = 1$ and $B \Lambda =  Q^{-1}\Z{\times} P\Z$.

Next, since $\det B = 1$, we see from \cite[Lemma~9.4.1 and Equation~(9.39)]{g}
that there is a unitary operator $U_B : L^2(\R)\to L^2(\R)$ satisfying
\begin{equation}\label{e:mp}
  U_B \, \rho(z) = \rho(Bz) \, U_B
  \qquad \text{for all } z \in \R^2 \,,
\end{equation}
where
\begin{equation}\label{e:rho}
\rho(a,b)f := e^{-\pi iab}\pi(a,b)f,\quad \text{for } f\in L^2(\R) \text{ and } a,b \in \R .
\end{equation}
Such an operator $U_B$ is called \emph{symplectic}.
As a consequence of Schur's Lemma (see \cite[Lemma~9.3.2]{g}),
the operator $U_B$ is unique up to multiplication with unimodular constants;
thus, we see for $B,B_1,B_2 \in \SL(2, \R)$ that
\begin{equation}
  U_{B_1 B_2} = c_{B_1,B_2} \, U_{B_1} U_{B_2}
  \qquad\text{and}\qquad
  U_B^* = c_B \cdot U_{B^{-1}}
  \label{eq:MetaplecticMultiplicative}
\end{equation}
for certain constants $c_{B_1, B_2}, c_B \in \C$ with $|c_{B_1, B_2}| = 1 = |c_B|$.

For us, an important property of symplectic operators is that they leave $\HH^1 (\R)$ invariant.
To see this, recall from \cite[discussion around Equation~(4.5)]{clpp}
that each matrix $B \in \SL (2, \R)$ can be written as a product of matrices of the form
\[
  B_0
  := \left(
       \begin{matrix}
         0  & 1 \\
         -1 & 0
       \end{matrix}
     \right) \, ,
  \quad
  B_\alpha^{(1)}
  := \left(
       \begin{matrix}
         \alpha^{-1} & 0 \\
         0      & \alpha
       \end{matrix}
     \right) \, ,
  \quad \text{and} \quad
  B_\beta^{(2)}
  := \left(
       \begin{matrix}
         1     & 0 \\
         \beta & 1
       \end{matrix}
     \right)
\]
with $\alpha, \beta \in \R \backslash \{0\}$.
Furthermore, if we define operators $D_\alpha : L^2(\R) \to L^2(\R)$ and
$C_\beta : L^2 (\R) \to L^2(\R)$ by $D_\alpha f (x) := |\alpha|^{1/2} \cdot f(\alpha x)$
and $C_\beta f (x) = e^{\pi i \beta x^2} \cdot f(x)$, then a direct computation shows
that the choices $U_{B_\alpha^{(1)}} := D_\alpha$ and $U_{B_\beta^{(2)}} := C_\beta$
make \eqref{e:mp} valid.
%
%
Likewise, if we let $U_{B_0} := \calF$ be the Fourier transform, then \eqref{e:mp} is satisfied
as well.

Thus, in view of \eqref{eq:MetaplecticMultiplicative}, it suffices to show that $\HH^1(\R)$
is invariant under the operators $\calF$, $D_\alpha$, and $C_\beta$.
For $\calF$ and $D_\alpha$, this is trivial.
Finally, for $C_\beta$ recall that $f\in L^2(\R)$ is in $\HH^1(\R)$
if and only if $Xf \in L^2(\R)$ and if $f$ is locally absolutely continuous with $f' \in L^2(\R)$.
As a consequence of the product rule for Sobolev functions
(see for instance \cite[Section~4.25]{AltLinearFunctionalAnalysis}),
it follows that if $g \in \HH^1(\R)$, then $C_\beta g$ is locally absolutely continuous,
with
\[
  (C_\beta \, g)'(x)
  = 2\pi i \beta x \cdot e^{\pi i \beta x^2}\cdot g(x) + e^{\pi i \beta x^2} \cdot g'(x)
  \in L^2 (\R) \, .
\]
Since $XC_\beta \, g\in L^2(\R)$ holds trivially,
we have $C_\beta \, g \in \HH^1(\R)$, as desired.

\medskip{}

To see an application of symplectic operators, note that if $\Lambda$ is a lattice of
rational density with $B \Lambda = Q^{-1} \Z {\times} P \Z$ for some $B \in \SL(2,\R)$,
and if $g\in L^2(\R)$ is such that $(g,\Lambda)$ is a Riesz sequence, one may define
\[
  g_1 := U_B \, g
  \qquad\text{and}\qquad
  \Lambda_1 := B \Lambda = \tfrac{1}{Q} \Z {\times} P \Z \, .
\]
Then \eqref{e:mp} implies $\pi(B\la)g_1 = c_\la U_B \, \pi(\la)g$, $\la\in\Lambda$,
where $c_\la = c_{\lambda} (B)$ is a unimodular constant.
Hence, $(g_1,\Lambda_1)$ is a Riesz basis for its closed linear span
$\calG(g_1,\Lambda_1) = U_B \, \calG(g,\Lambda)$.
This reduction to the separable lattice $\Lambda_1$ will be crucial in the proof of the following
proposition.

\begin{prop}\label{p:dual}
Let $g\in L^2(\R)$ and let $\Lambda\subset\R^2$ be a lattice of rational density
such that $(g,\Lambda)$ is a Riesz sequence.
Let $\wt{g}$ be the dual window of $(g,\Lambda)$.
Then $g\in\HH^1(\R)$ if and only if $\wt{g} \in \HH^1(\R)$.
\end{prop}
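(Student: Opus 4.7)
The plan is to reduce to a separable lattice via symplectic operators and then exploit the explicit formula $\widetilde{A} = A(A^\ast A)^{-1}$ from Lemma~\ref{l:dist}.

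First, using the discussion preceding the proposition, I would choose $B \in \SL(2,\R)$ with $B\Lambda = \tfrac{1}{Q}\Z \times P\Z$ and set $g_1 := U_B g$ and $\Lambda_1 := B\Lambda$. A short calculation using $\pi(B\lambda) g_1 = c_\lambda U_B \pi(\lambda) g$ from \eqref{e:mp} shows that the frame operator $\frameOP^{(1)}$ of $(g_1, \Lambda_1)$ equals $U_B \frameOP U_B^\ast$; hence the canonical dual windows are related by $\widetilde{g_1} = U_B \widetilde{g}$. Since symplectic operators preserve $\HH^1(\R)$ (as demonstrated in the paragraph before the proposition), the equivalences $g \in \HH^1 \Leftrightarrow g_1 \in \HH^1$ and $\widetilde{g} \in \HH^1 \Leftrightarrow \widetilde{g_1} \in \HH^1$ both hold, so I may assume from now on that $\Lambda = \tfrac{1}{Q}\Z \times P\Z$.

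Under this assumption, Lemma~\ref{l:gZg} identifies membership in $\HH^1(\R)$ with local $H^1$-regularity of the Zak transform. By Lemma~\ref{l:matrix}, the entries of $A = A_g$ are translates of $Zg$ (and likewise for $\widetilde{A}$ and $Z\widetilde{g}$), so $Zg \in H^1_{\mathrm{loc}}(\R^2)$ is equivalent to every entry of $A$ lying in $H^1_{\mathrm{loc}}(\R^2)$, with the analogous equivalence for $\widetilde{A}$. Moreover, since $(g,\Lambda)$ is a Riesz sequence, Lemma~\ref{l:matrix} guarantees that $Zg \in L^\infty(\R^2)$ and that $\essinf_{z} \sigma_0(A(z)) > 0$. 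Consequently, the entries of $A$ lie in $L^\infty \cap H^1_{\mathrm{loc}}$ and $A^\ast A$ is essentially bounded below by some constant $c > 0$; in particular $\det(A^\ast A) \ge c^Q$ almost everywhere.

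The main technical step is to show that $\widetilde{A} = A(A^\ast A)^{-1}$ has $H^1_{\mathrm{loc}}$ entries. Because the class of real- or complex-valued functions in $L^\infty \cap H^1_{\mathrm{loc}}(\R^2)$ is closed under products (by the standard product rule for Sobolev functions, since $(fh)' = f'h + fh' \in L^2_{\mathrm{loc}}$ whenever $f,h \in L^\infty \cap H^1_{\mathrm{loc}}$), the entries of $A^\ast A$ and the scalar function $\det(A^\ast A)$ again lie in $L^\infty \cap H^1_{\mathrm{loc}}$. Since $\det(A^\ast A)$ is bounded away from $0$, an application of the Sobolev chain rule to the map $t \mapsto 1/t$ (which is smooth with bounded derivative on $[c^Q, \|\det(A^\ast A)\|_{L^\infty}]$) yields $1/\det(A^\ast A) \in L^\infty \cap H^1_{\mathrm{loc}}$. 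Therefore $(A^\ast A)^{-1} = \mathrm{adj}(A^\ast A)/\det(A^\ast A)$ has entries in $L^\infty \cap H^1_{\mathrm{loc}}$, and so does $\widetilde{A} = A(A^\ast A)^{-1}$. This proves the implication $g \in \HH^1(\R) \Rightarrow \widetilde{g} \in \HH^1(\R)$.

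For the converse, I would invoke symmetry: a direct computation gives $\widetilde{A}^\ast \widetilde{A} = (A^\ast A)^{-1}$, so $\widetilde{A}(\widetilde{A}^\ast \widetilde{A})^{-1} = A$, which means that the canonical dual of the Riesz sequence $(\widetilde{g}, \Lambda)$ is $g$. Applying the implication already proved with $\widetilde{g}$ in place of $g$ then yields $\widetilde{g} \in \HH^1(\R) \Rightarrow g \in \HH^1(\R)$. The most delicate part of the argument is justifying that $(A^\ast A)^{-1}$ inherits $H^1_{\mathrm{loc}}$ regularity; this hinges on combining the quantitative Riesz-sequence bound $\sigma_0(A) \ge c > 0$ with the product and chain rules for Sobolev functions, both of which are standard once the necessary $L^\infty$ bounds on the entries are in place.
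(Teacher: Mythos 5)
Your proposal is correct and follows essentially the same route as the paper's proof: reduce via the formula $\widetilde{A} = A(A^\ast A)^{-1}$ from Lemma~\ref{l:dist}, translate $\HH^1$-membership to local $H^1$-regularity of the Zak transform via Lemma~\ref{l:gZg}, and deduce the regularity of $\widetilde{A}$ from that of $A$ using the $L^\infty$ bound on $Zg$ and the uniform lower bound on $\sigma_0(A)$ guaranteed by Lemma~\ref{l:matrix}. The two cosmetic differences are that you perform the symplectic-operator reduction to a separable lattice \emph{before} treating that case, whereas the paper treats the separable case first and then reduces; and where the paper simply cites the standard fact that the dual of the dual Riesz basis is the original basis, you verify it explicitly through the tidy matrix identity $\widetilde{A}^\ast \widetilde{A} = (A^\ast A)^{-1}$, which is a nice touch. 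You also re-derive the content of Lemma~\ref{lem:SobolevMatrices} (product rule, cofactor formula, chain rule for $t \mapsto 1/t$) rather than citing it; when you do, note that applying the chain rule (Lemma~\ref{lem:SobolevChainRule}) formally requires extending $t \mapsto 1/t$ to a globally Lipschitz map on $\C$, which the paper constructs explicitly in Lemma~\ref{lem:SobolevAlgebra}~(b). Since $\det(A^\ast A)$ takes values in a fixed compact interval $[c,\|\det(A^\ast A)\|_{L^\infty}]$ away from zero, such an extension exists, so your argument is sound even if slightly informally stated.
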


\begin{proof}
Let us first prove the claim for $\Lambda = Q^{-1}\Z{\times} P\Z$, where $P,Q\in\N$.
Assume that $g\in\HH^1(\R)$.
By Lemma \ref{l:gZg}, $Zg\in H^1_{\rm loc}(\R^2)$.
Let us denote by $A_g$ and $A_{\widetilde{g}}$ the matrix functions introduced in
Lemma~\ref{l:matrix}.
Using that lemma, we conclude that each entry of $A_g$
is contained in $L^\infty(\R^2)$ and that there exists $c > 0$ such that $\sigma_0(A_g (z)) \ge c$
for a.e.\ $z\in\R^2$.
Therefore, a combination of Equation~\eqref{e:dual2} and
Lemmas~\ref{lem:TechnicalPseudoInverseIdentities} and \ref{lem:SobolevMatrices} shows that each entry of
$A_{\widetilde{g}} = A_g(A_g^*A_g)^{-1} = (A_g^\dagger)^*$
is contained in $H^1_{\rm loc}(\R^2)\cap L^\infty(\R^2)$.
In view of the definition of $A_{\widetilde{g}}$, this shows that
${Z \wt{g} \in H^1_{\rm loc}(\R^2)}$, whence Lemma \ref{l:gZg} implies $\wt{g} \in \HH^1(\R)$.
Since $(\wt{g}, \Lambda)$ is also a Riesz basis
for $\calG(g,\Lambda) = \calG(\widetilde{g}, \Lambda)$ with $(g,\Lambda)$ being the dual Riesz basis,
interchanging the roles of $g$ and $\wt{g}$ in the above arguments shows that
$\wt{g} \in \HH^1(\R)$ implies $g \in \HH^1(\R)$.

\smallskip

Now, let $\Lambda\subset\R^2$ be an arbitrary lattice of rational density.
As seen before Equation~\eqref{e:mp}, there is a matrix $B\in \SL(2, \R)$
such that ${\Lambda_1 := B\Lambda = Q^{-1}\Z{\times} P\Z}$ for certain $P,Q\in\N$.
Let $g_1 := U_B \, g$.
Then $(g_1,\Lambda_1)$ is a Riesz basis for $\calG(g_1,\Lambda_1) = U_B \, \calG(g,\Lambda)$.
Furthermore, since $\pi(B \lambda) g_1 = c_\lambda U_B \pi(\lambda) g$ for $\lambda \in \Lambda$,
where $|c_\lambda| = 1$, it is not hard to see that the frame operator $\widetilde{\frameOP}$
for $(g_1, \Lambda_1)$ is given by $\widetilde{\frameOP} = U_B \frameOP U_B^\ast$,
where $\frameOP$ is the frame operator of $(g,\Lambda)$.
Hence, as discussed before Lemma~\ref{l:dist},
the dual window of $(g_1, \Lambda_1)$ is given by via the pseudo-inverse as
\({
  \widetilde{g_1}
  = \pseudo{\widetilde{\frameOP}} g_1
  = U_B \pseudo{\frameOP} U_B^\ast U_B g
  = U_B \widetilde{g}
  ,
}\)
where used that $\pseudo{\widetilde{\frameOP}} = U_B \pseudo{\frameOP} U_B^\ast$ due to
Corollary~\ref{cor:PseudoInverseConjugation}.

Now, suppose that $g\in\HH^1(\R)$.
As seen in the discussion before this proposition,
symplectic operators leave $\HH^1(\R)$ invariant;
thus, $g_1 = U_B \, g\in\HH^1(\R)$.
Hence, by what we showed above, we see that $\wt{g}_1\in\HH^1(\R)$, which implies
${\wt{g} = U_B^* \, \wt{g}_1 = c_B \, U_{B^{-1}} \, \wt{g}_1\in\HH^1(\R)}$.
Finally, by interchanging the roles of $g$ and $\widetilde{g}$
we see that $\widetilde{g} \in \HH^1 (\R)$ implies $g \in \HH^1 (\R)$.
\end{proof}

\section{Differentiability of the Time-Frequency Map}
\label{s:diff}

In this section, we show that for $g \in \mathbb{H}^1 (\R)$ the map
$(a,b) \mapsto e^{2 \pi i b x} g(x - a)$ is differentiable at the origin,
with the derivative given by $(a,b) \mapsto -a g' + 2\pi i b X g$.
In the proof, we will make use of the following simple estimate.
Recall that the $\sinc$ function is defined by $\sinc(x) :=\tfrac{\sin(\pi x)}{\pi x}$
for $x \in \R \backslash \{0\}$ and $\sinc(0) := 1$.

\begin{lem}\label{lem:SincEstimate}
We have $\big| \tfrac{\sin(x)}x - e^{-ix} \big| \leq |x|$ for all $x \in \R \backslash \{0\}$.
Consequently,
\begin{equation}\label{e:sinc}
  |\sinc(x) - e^{-i\pi x}| \le \min\{2,\pi|x|\}
  \quad \text{for all $x \in \R$} \, .
\end{equation}
\end{lem}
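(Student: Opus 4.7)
The plan is to reduce the pointwise estimate $|\sin(x)/x - e^{-ix}| \leq |x|$ to a purely real calculus problem by squaring, and then derive the $\sinc$-bound as an immediate corollary. For the first inequality (with $x \neq 0$), I would split $e^{-ix} = \cos x - i\sin x$ to obtain
\[
  \Big|\frac{\sin x}{x} - e^{-ix}\Big|^2
  = \Big(\frac{\sin x}{x} - \cos x\Big)^2 + \sin^2 x
  = \frac{\sin^2 x}{x^2} - \frac{\sin(2x)}{x} + 1 ,
\]
using $2\sin x\cos x = \sin(2x)$ and $\cos^2 x + \sin^2 x = 1$. Multiplying the desired bound $|\,\cdot\,|^2 \leq x^2$ by $x^2 > 0$ and rearranging, the claim becomes equivalent to
\[
  F(x) := x^4 - x^2 - \sin^2 x + x\sin(2x) \geq 0 \qquad (x \in \R).
\]
Since $F$ is even with $F(0) = 0$, it suffices to verify $F \geq 0$ on $[0,\infty)$.

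To handle $F$ on the positive half-line, I would differentiate and exploit the cancellation $\sin(2x) - 2\sin x\cos x = 0$ to reach the clean form
\[
  F'(x) = 2x \bigl( 2x^2 + \cos(2x) - 1 \bigr) =: 2x\, G(x).
\]
Then $G(0) = 0$ and $G'(x) = 2\bigl(2x - \sin(2x)\bigr) \geq 0$ on $[0,\infty)$ by the elementary bound $\sin u \leq u$ for $u \geq 0$. Hence $G \geq 0$ there, so $F' \geq 0$ on $[0,\infty)$, and therefore $F \geq F(0) = 0$. The only step requiring any care is spotting the cancellation that yields the compact form of $F'$; once that is in hand, the chain of monotonicity arguments is routine.

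The second inequality then requires little work. For $x \neq 0$, applying the first inequality with $\pi x$ in place of $x$ yields
\[
  |\sinc x - e^{-i\pi x}|
  = \Big|\frac{\sin(\pi x)}{\pi x} - e^{-i\pi x}\Big|
  \leq |\pi x| = \pi|x|,
\]
and the case $x = 0$ is trivial since $\sinc 0 = 1 = e^{0}$. Independently, $|\sinc x| \leq 1$ (from $|\sin u| \leq |u|$) and $|e^{-i\pi x}| = 1$ give the uniform bound $|\sinc x - e^{-i\pi x}| \leq 2$ by the triangle inequality. Taking the minimum of the two bounds produces the claim.
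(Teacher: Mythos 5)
Your proof is correct and follows essentially the same route as the paper: your $F$ is exactly the negative of the auxiliary function $f(x) = (\sin x - x\cos x)^2 + x^2\sin^2 x - x^4$ used there, and both arguments reduce to the inequality $\sin u \le u$ for $u \ge 0$. The only cosmetic difference is that the paper factors $f'(x) = 4x(\sin x - x)(\sin x + x)$ and concludes directly, whereas you differentiate once more to reach $G'(x) = 2(2x - \sin 2x) \ge 0$; both amount to the same elementary bound.
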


\begin{proof}
The first inequality is equivalent to $|\sin(x) - xe^{-ix}| \le x^2$ and thus to
\[
  f(x)
  := (\sin(x)-x\cos(x))^2 + x^2 \sin^2(x) - x^4
  \le 0 \, .
\]
Since $f$ is even, it suffices to prove $f(x) \le 0$ for $x > 0$.
We have
\begin{align*}
  f'(x)
  &= 2x \sin(x) \, \big( \sin(x)-x\cos(x) \big)
     + 2 x \sin^2(x)
     + 2 x^2 \sin(x) \cos(x)
     - 4 x^3 \\
  &= 4x \sin^2(x) - 4 x^3
   = 4x (\sin(x) - x) (\sin(x) + x) \, .
\end{align*}
As $\sin(x) < x$ and $\sin(x)+x > 0$ for $x > 0$, we have that $f'(x) < 0$ for $x > 0$.
Since $f(0) = 0$, this proves the claim.
Equation~\eqref{e:sinc} is a direct consequence of the first estimate combined with
$|\sinc(x)| \leq 1$ and $|e^{-i \pi x}| \leq 1$.
\end{proof}

For $g\in L^2(\R)$ define the map $S_g : \R^2\to L^2(\R)$ by
\[
  S_g(a,b)
  := \pi(a,b)g
  = e^{2\pi ib\,(\cdot)} g(\,\cdot-a),
  \quad a,b\in\R.
\]
It is well known (see e.g.~\mbox{\cite[Lemma~2.9.2]{ChristensenIntroductionToFrames}})
that $S_g$ is continuous for every $g\in L^2(\R)$.
Here, we will show that $S_g$ is differentiable if $g\in\HH^1(\R)$.
We will first prove the differentiability of $S_g$ at the origin
and then use it to prove the differentiability of $S_g$ at arbitrary points $(u,\eta) \in \R^2$. 
For the convenience of the reader, we recall the notion of \braces{Fr\'echet}-differentiability:
The map ${S_g : \R^2 \to L^2(\R)}$ is \emph{\braces{Fr\'echet}-differentiable at $(u,\eta) \in \R^2$}
if there is a bounded linear map $T : \R^2 \!\to\! L^2(\R)$ satisfying
\(
  \frac{|S_g (u+a,\eta+b) - S_g (u,\eta) - T \left(\begin{smallmatrix} a \\ b \end{smallmatrix}\right)|}
       {|(a,b)|}
  \to 0
\)
as $(a,b) \to 0$.
Such a map $T$ is referred to as the derivative of $S_g$ at $(u,\eta)$,
denoted by $S_g ' (u,\eta)$; see e.g.~\cite[Chapter~XIII, §2]{LangRealFunctional} for more details.


\begin{lem}\label{l:differentiable}
For any $g\in\HH^1(\R)$, the map $S_g$ is \braces{Fr\'echet}-differentiable at $(0,0)$ with
\[
  S_g'(0,0)\smallvek{a}{b} = -ag' + 2\pi i b Xg ,
\]
where $X$ is the position operator defined formally by $Xf (x) = x f(x)$.

If ${g\in\HH^2(\R)}$ (that is, $g \in H^2(\R)$ and $\widehat{g} \in H^2 (\R)$), then
\begin{equation}\label{e:H2}
  \|S_g(a,b) - g - (-ag' + 2\pi ibXg)\|_{\LTwoIndex}
  \,\le\, C_g \cdot \|(a,b)\|_2^2
  \quad \forall \, (a,b) \in \R^2 \, ,
\end{equation}
where
\begin{equation}\label{e:Cg}
  C_g := 3 \pi^2 \max \big\{
                        \|X^2g\|_{\LTwoIndex},
                        \|\omega^2 \widehat{g}\|_{\LTwoIndex},
                        \|Xg'\|_{\LTwoIndex}
                      \big\} \, .
\end{equation}
\end{lem}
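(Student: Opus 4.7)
The key is to decompose the remainder using the factorization $\pi(a,b) = M_b T_a$ via the algebraic identity
\[
  \pi(a,b)g - g + ag' - 2\pi i b Xg
  = M_b \bigl( T_a g - g + a g' \bigr)
    - a \bigl( M_b g' - g' \bigr)
    + \bigl( M_b g - g - 2\pi i b X g \bigr) ,
\]
which is verified by direct expansion and cancellation. Each of the three summands depends on only one of $a$ or $b$ (apart from the coefficient $a$ in front of the cross term), and can be controlled by elementary pointwise estimates. The crucial feature is that the ``second-order'' quantities $X^2$ and $Xg'$ are applied to $g$ (or $g'$) itself and not to any translate $T_a g$, so the bounds will not deteriorate as $a$ grows.

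For \emph{Fréchet differentiability at the origin} ($g \in \HH^1$) I would show each piece is $o(\|(a,b)\|_2)$. The first satisfies the classical $H^1$ estimate: since $g$ is locally absolutely continuous, $(T_a g - g + a g')(x) = \int_0^a \bigl( g'(x) - g'(x-s) \bigr)\, ds$, whence $\|T_a g - g + a g'\|_{L^2} \le |a| \sup_{|s|\le|a|} \|g' - T_s g'\|_{L^2} = o(|a|)$ by $L^2$-continuity of translation. The cross term is handled by continuity of modulation: $\|M_b g' - g'\|_{L^2} \to 0$ as $b \to 0$ (since $g' \in L^2$), so $|a| \cdot \|M_b g' - g'\|_{L^2} \le \|(a,b)\|_2 \cdot o(1)$. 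For the third term, $(M_b g - g - 2\pi i b Xg)(x) = (e^{2\pi i b x} - 1 - 2\pi i b x)\, g(x)$, and dividing by $b^2$ gives an integrand bounded by $16\pi^2 x^2 |g(x)|^2$ (via the dominant $|e^{iu}-1-iu|/|u| \le 2$ from Lemma~\ref{lem:SincEstimate}) with pointwise limit zero; since $Xg \in L^2$ (because $\hat g \in H^1$), dominated convergence gives $\|M_b g - g - 2\pi i b Xg\|_{L^2} = o(|b|)$.

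For the \emph{quadratic bound} ($g \in \HH^2$), each piece admits a sharp Taylor-type estimate. Taylor's theorem with integral remainder gives $(T_a g - g + a g')(x) = \int_0^a (a-s) g''(x-s)\, ds$, hence $\|T_a g - g + a g'\|_{L^2} \le \tfrac{a^2}{2} \|g''\|_{L^2} = 2\pi^2 a^2 \|\omega^2 \hat g\|_{L^2}$ by Plancherel. The bound $|e^{iu}-1| \le |u|$ yields $\|M_b g' - g'\|_{L^2} \le 2\pi |b| \, \|Xg'\|_{L^2}$, so $|a|\,\|M_b g' - g'\|_{L^2} \le 2\pi |ab| \, \|Xg'\|_{L^2}$. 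The refinement $|e^{iu}-1-iu| \le u^2/2$ from Lemma~\ref{lem:SincEstimate} gives $\|M_b g - g - 2\pi i b Xg\|_{L^2} \le 2\pi^2 b^2 \, \|X^2 g\|_{L^2}$. Setting $A := \max\{\|X^2 g\|_{L^2}, \|\omega^2 \hat g\|_{L^2}, \|Xg'\|_{L^2}\}$ and using $|ab| \le \tfrac{1}{2}(a^2 + b^2)$, the triangle inequality yields
\[
  \|S_g(a,b) - g - (-ag' + 2\pi i b Xg)\|_{L^2}
  \le 2\pi^2 a^2 A + \pi(a^2 + b^2) A + 2\pi^2 b^2 A
  = (2\pi^2 + \pi) \|(a,b)\|_2^2 \cdot A
  \le 3\pi^2 A \, \|(a,b)\|_2^2,
\]
valid for all $(a,b) \in \R^2$, since $2\pi^2 + \pi \le 3\pi^2$.

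The main obstacle is finding the right decomposition. A naive splitting such as $M_b T_a g - g = (M_b - I) T_a g + (T_a - I) g$ would lead to $\|X^2 T_a g\|_{L^2} = \|(X+a)^2 g\|_{L^2}$, which grows like $|a|^2 \|g\|_{L^2}$ for large $a$ and ruins any hope of a global quadratic bound with a constant depending only on the three seminorms appearing in $C_g$ (and notably not on $\|g\|_{L^2}$ or $\|Xg\|_{L^2}$). The symmetrized decomposition above circumvents this by applying $X^2$ only to $g$ itself and forcing the cross-term modulation remainder to act on $g'$, so that it is controlled purely by $\|Xg'\|_{L^2}$.
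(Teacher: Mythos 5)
Your decomposition of the remainder is exactly the paper's equation~\eqref{ha}, merely with the three summands reordered, and Lemma~\ref{lem:SincEstimate} plays the same role in bounding the modulation remainder $(e^{2\pi i b x}-1-2\pi i b x)g$. The only genuine differences are in how the individual pieces are controlled. For the translation remainder $T_a g - g + a g'$ you stay in the spatial domain, using the integral identity $\int_0^a (g'-T_s g')\,ds$ together with $L^2$-continuity of translation for the $o(|a|)$ bound, and the Taylor integral remainder $\int_0^a (a-s)g''(x-s)\,ds$ followed by Minkowski and Plancherel for the quadratic bound; the paper instead passes to the Fourier side via Plancherel and reuses the sinc-based modulation estimate, arriving at the same constant $2\pi^2 a^2\|\omega^2\widehat g\|_{L^2}$. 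Likewise, for the cross term you invoke strong continuity of modulation on $L^2$ where the paper runs a dominated-convergence argument with a split at scale $|b|^{-1/2}$. These are equivalent routes, and your final constant $2\pi^2+\pi$ even improves slightly on $3\pi^2$, so the proposal is correct and amounts to a mildly streamlined version of the paper's proof.
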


\begin{rem*}
As shown in Lemma~\ref{lem:H2FunctionsNonSymmetricFourierOperators},
we indeed have $Xg'\in L^2(\R)$ if $g \in \HH^2(\R)$.
\end{rem*}

\begin{proof}
Let $\Psi_g : \R^2\to L^2(\R)$ be defined by $\Psi_g \smallvek{a}{b} := -a g' + 2\pi i b X g$;
in particular, $\Psi_g$ is linear.
We have to prove that
\begin{equation}
  \lim_{(a,b)\to (0,0)}
    \frac{\| S_g(a,b) - g - \Psi_g \smallvek{a}{b} \|_{\LTwoIndex}}
         {\sqrt{a^2+b^2}}
  = 0.
  \label{eq:FrequencyMapDerivativeTargetEstimate}
\end{equation}
To see this, we write
\begin{align}
  &  [S_g(a,b) - g - \Psi_g \smallvek{a}{b}] (x) \notag \\
  &= e^{2\pi ibx}g(x-a) - g(x) + ag'(x) - 2\pi ibxg(x) \notag \\
  &= e^{2\pi ibx}
     \bigl(g(x \!-\! a) - g(x)\bigr)
     + (e^{2\pi ibx} \!-\! 1 \!-\! 2\pi ibx) g(x)
     + a g'(x) \notag \\
  \begin{split}
    &= e^{2\pi ibx} \big(T_a g \!-\! g \!+\! a g'\big)(x)
       + \big[ e^{2\pi i b x} \!-\! 1 \!-\! 2\pi i b x\big] g(x)
       + a \big[1 \!-\! e^{2\pi i b x}\big] g'(x) \, .
    \label{ha}
  \end{split}
\end{align}
To estimate the middle term in \eqref{ha},
recall that ${\sinc(x) = \frac{\sin(\pi x)}{\pi x} = \frac{e^{\pi i x} - e^{- \pi i x}}{2\pi i x}}$
and hence
\begin{equation}
  e^{2\pi i b x} - 1 - 2\pi i b x
  = 2\pi i b x e^{\pi i b x} \big[\sinc(bx) - e^{-\pi i b x}\big] .
  \label{eq:ExpIDerivativeSinc}
\end{equation}
Therefore,
\begin{align}\label{huhu}
  \int \!
    \big|
      e^{2\pi i b x} \!-\! 1 \!-\! 2\pi i b x
    \big|^2
    |g(x)|^2
  dx
  = 4 \pi^2 b^2 \!\!
    \int \!\!
      x^2
      \left|
        \sinc(bx) \!-\! e^{-\pi i b x}
      \right|^2
      |g(x)|^2
    dx .
\end{align}
Using the estimate \eqref{e:sinc}, we find that this expression is not larger than
\[
  4\pi^4|b|^3
  \int_{|x| < 1/\sqrt{|b|}} \,\,
    x^2|g(x)|^2
  \,dx
  \;+\;
  16\pi^2b^2
  \int_{|x|\ge 1/\sqrt{|b|}} \,\,
    x^2|g(x)|^2
  \,dx \, .
\]
Hence, we obtain
\begin{equation}
  \begin{split}
    & \left(
        \frac{\| (e^{2\pi i b x} - 1 - 2\pi i b x) \cdot g \|_{\LTwoIndex}}
             {\sqrt{a^2 + b^2}}
      \right)^2 \\
    & \leq \frac{1}{b^2}
           \int
             \left|e^{2\pi ibx} - 1 - 2\pi ibx\right|^2|g(x)|^2
           \,dx \\
    & \leq 4\pi^4 |b| \, \|X g\|_{\LTwoIndex}^2
           + 16\pi^2
             \int_{|x| \geq |b|^{-1/2}} \,\,
               \!\!
               x^2 \, |g(x)|^2
             \,dx \, ,
  \end{split}
  \label{eq:ModulationDerivative}
\end{equation}
which tends to zero as $b \to 0$ as a consequence of $Xg \in L^2$
and the dominated convergence theorem.

For the first term in \eqref{ha}, observe that Plancherel's theorem yields
\begin{equation}
  \|T_ag - g + ag'\|_{\LTwoIndex}^2
    = \int
        | e^{2\pi i(-a)\omega} - 1 - 2\pi i(-a)\omega |^2
        \cdot | \widehat{g} (\omega) |^2
      \,d\omega \, .
  \label{eq:TranslationDerivativeReducedToModulation}
\end{equation}
Thus, using that $\big(\omega \mapsto \omega \cdot \widehat{g}(\omega) \big) \in L^2$,
we can conclude from our calculations
in \eqref{eq:ModulationDerivative} that
\[
  0 \leq \left(
           \frac{\| T_a g - g + a \, g' \|_{\LTwoIndex}}
                {\sqrt{a^2 + b^2}}
         \right)^2
    \leq \frac{\|T_a g - g + a g'\|_{\LTwoIndex}^2}{a^2}
    \xrightarrow[a \to 0]{} 0 \, .
\]
Finally, using the estimates $|e^{2 \pi i b x} - 1| \leq 2 \pi |b x|$
and ${|e^{2 \pi i b x} - 1| \leq 2}$, we can treat the last summand in \eqref{ha} as follows:
\[
  \big\| a \cdot [1 - e^{2 \pi i b x}] \cdot g'(x) \big\|_{L^2}^2
  \le 4 \pi^2 a^2 |b|
      \int_{|x| \leq |b|^{-\frac{1}{2}}}
        |g'(x)|^2
      \, d x 
      + 4 a^2 \!\!
        \int_{|x| \ge |b|^{-\frac{1}{2}}}
          |g'(x)|^2
        \, d x .
\]
Hence,
\[
  \left(
    \frac{\| a \cdot g'(x) \cdot (1 - e^{2 \pi i b x}) \|_{\LTwoIndex}}
         {\sqrt{a^2 + b^2}}
  \right)^2
  \,\le\, 4\pi^2 |b| \cdot \|g'\|_{\LTwoIndex}^2
            + 4 \int_{|x|\ge 1/\sqrt{|b|}} \,\,
                  |g'(x)|^2
                \,dx \, ,
\]
which tends to zero as $(a,b) \to (0,0)$, again as a consequence of the dominated convergence
theorem and $g' \in L^2$.
By recalling \eqref{ha}, we thus see that \eqref{eq:FrequencyMapDerivativeTargetEstimate} holds.

\medskip
Assume now that $g \in \HH^2(\R)$.
In order to prove \eqref{e:H2}, we recall Equations~\eqref{huhu} and \eqref{e:sinc} to see that
\begin{align*}
  \int\left|e^{2\pi ibx} - 1 - 2\pi ibx\right|^2|g(x)|^2\,dx
  &= 4\pi^2 b^2
     \int
       x^2
       \left| \sinc(bx) - e^{-\pi i b x} \right|^2
       |g(x)|^2
     \,dx\\
  &\le 4 \pi^4 b^4 \int
                     x^4|g(x)|^2
                   \,dx
    =  4 \pi^4 b^4 \|X^2 g\|_{\LTwoIndex}^2 \, .
\end{align*}
Likewise, we use Equations~\eqref{eq:TranslationDerivativeReducedToModulation},
\eqref{e:sinc}, and \eqref{eq:ExpIDerivativeSinc} to obtain
\[
  \|T_a g - g + a g'\|_{\LTwoIndex}^2
  \,\le\, 4 \pi^4 a^4 \int \omega^4 \, | \widehat{g} (\omega)|^2 \,d\omega
  =       4 \pi^4 a^4 \| \omega^2 \widehat{g} (\omega) \|_{\LTwoIndex}^2 \, .
\]
Furthermore,
\[
  a^2 \int
        \left| e^{2\pi ibx} - 1 \right|^2
        | g'(x) |^2
      \,dx
  \,\le\, 4 \pi^2 a^2 b^2 \int x^2 |g'(x)|^2 \, d x
  =       4 \pi^2 a^2 b^2 \|Xg'\|_{\LTwoIndex}^2 \, .
\]
Thus, Equation~\eqref{ha},
combined with the elementary estimate $|ab| \leq \frac{1}{2} (a^2 + b^2)$, shows that
\[
  \|S_g(a,b) - g - (-ag' + 2\pi i b X g)\|_{\LTwoIndex}
  \,\le\, \tfrac 2 3\,C_g\cdot (a^2+b^2+|a||b|)
  \,\le\,C_g \cdot \|(a,b)\|_2^2 \, ,
\]
and the lemma is proved.
\end{proof}

\begin{cor}\label{c:derivative}
For any $g\in\HH^1(\R)$, the map $S_g$ is continuously \braces{Fr\'echet}-differentiable with
\[
  S_g'(\mu)\smallvek{a}{b}
  = -a \pi(\mu) g' + 2\pi i b X \pi(\mu) g,
  \qquad \mu \in \R^2.
\]
\end{cor}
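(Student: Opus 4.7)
The natural strategy is to reduce differentiability at an arbitrary point $\mu = (u,\eta)$ to differentiability at the origin (Lemma~\ref{l:differentiable}) via the near-commutativity relation
\[
  \pi(\mu + \nu) \, g = e^{2\pi i u b} \, \pi(\mu) \, \pi(\nu) \, g, \qquad \nu = (a,b),
\]
which follows from the identity $\pi(\lambda)\pi(\mu) f = e^{-2\pi i \alpha b}\pi(\lambda+\mu) f$ recalled just before Lemma~\ref{l:commute}. Using this, I rewrite
\[
  S_g(\mu + \nu) - S_g(\mu)
  = \pi(\mu) \Bigl[ \bigl( \pi(\nu) g - g \bigr) + \bigl( e^{2\pi i u b} - 1 \bigr) \pi(\nu) g \Bigr].
\]

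\textbf{First step: derivation of the formula.} Lemma~\ref{l:differentiable} gives $\pi(\nu) g - g = -a g' + 2\pi i b X g + o(\|\nu\|_2)$ in $L^2$, and in particular $\|\pi(\nu) g - g\|_{L^2} = O(\|\nu\|_2)$. Combined with $|e^{2\pi i u b} - 1 - 2\pi i u b| = O(b^2)$, this yields
\[
  S_g(\mu+\nu) - S_g(\mu)
  = \pi(\mu) \bigl[ -a g' + 2\pi i b X g + 2\pi i u b \, g \bigr] + o(\|\nu\|_2),
\]
since $\pi(\mu)$ is unitary and the cross term $(e^{2\pi i u b} - 1)(\pi(\nu) g - g)$ is $O(\|\nu\|_2^2)$. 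A direct calculation shows $X \pi(\mu) g = \pi(\mu)(X g) + u \, \pi(\mu) g$, so the bracket equals $-a \pi(\mu) g' + 2\pi i b X \pi(\mu) g$ after pulling $\pi(\mu)$ inside (using $\pi(\mu) g' = \pi(\mu) g'$ trivially). This establishes Fréchet differentiability at $\mu$ with the claimed formula.

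\textbf{Second step: continuity of $\mu \mapsto S_g'(\mu)$.} With the formula in hand, continuity reduces to showing that $\mu \mapsto \pi(\mu) g'$ and $\mu \mapsto X \pi(\mu) g$ are continuous $\R^2 \to L^2(\R)$. The first is the well-known continuity of the time-frequency map $S_{g'}$, applicable since $g \in \HH^1(\R)$ implies $g' \in L^2(\R)$. For the second, I use the identity $X \pi(\mu) g = \pi(\mu)(X g) + u \, \pi(\mu) g$ and invoke continuity of $S_{X g}$ and $S_g$; note that $X g \in L^2(\R)$ because $\widehat{g} \in H^1(\R)$. Estimating
\[
  \bigl\| \bigl( S_g'(\mu) - S_g'(\mu_0) \bigr) \smallvek{a}{b} \bigr\|_{L^2}
  \le |a| \cdot \|\pi(\mu) g' - \pi(\mu_0) g'\|_{L^2} + 2\pi |b| \cdot \|X \pi(\mu) g - X \pi(\mu_0) g\|_{L^2}
\]
then gives $\|S_g'(\mu) - S_g'(\mu_0)\|_{\mathrm{op}} \to 0$ as $\mu \to \mu_0$.

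\textbf{Main obstacle.} The only subtlety is organizing the phase factor $e^{2\pi i u b}$ so as not to lose linear-order information: one must simultaneously expand $\pi(\nu) g - g$ to first order and expand $e^{2\pi i u b} - 1$ to first order, and then recognize that the additional summand $2\pi i u b \, \pi(\mu) g$ is precisely what is needed to upgrade $\pi(\mu)(X g)$ to $X \pi(\mu) g$. Once this bookkeeping is done, everything else is a routine consequence of Lemma~\ref{l:differentiable} and unitarity of $\pi(\mu)$.
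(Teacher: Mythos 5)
Your proof is correct, and it takes a genuinely different route from the paper's. The paper factorizes $M_{\eta+b}T_{u+a} = M_\eta\,\pi(a,b)\,T_u$, applies Lemma~\ref{l:differentiable} to the translated function $T_u g\in\HH^1(\R)$ (for which the position operator then automatically produces the correct quantity, since $M_\eta(X\,T_u g) = X\,\pi(\mu)g$), and pulls the unitary $M_\eta$ back out. You instead commute $\pi(\mu)$ to the left via $\pi(\mu+\nu) = e^{2\pi i u b}\pi(\mu)\pi(\nu)$, apply Lemma~\ref{l:differentiable} directly to $g$, expand the phase factor $e^{2\pi i u b}$ to first order, and observe that the extra summand $2\pi i u b\,\pi(\mu)g$ is exactly what is needed to recombine $\pi(\mu)(Xg)$ into $X\,\pi(\mu)g$. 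Your bookkeeping is a bit heavier (three separate $o(\|\nu\|_2)$ error terms and an explicit commutation identity for $X$ with $\pi(\mu)$), while the paper's translation-first device makes the position operator behave with no correction term; but the two approaches are equivalent in content and each is a legitimate reduction to the origin. One thing you do that the paper does not is spell out the continuity of $\mu\mapsto S_g'(\mu)$: the paper asserts continuous differentiability but its proof only checks differentiability at each $\mu$, leaving continuity implicit. Your second step---continuity of $\mu\mapsto\pi(\mu)g'$ from continuity of $S_{g'}$, and continuity of $\mu\mapsto X\pi(\mu)g$ via the identity $X\pi(\mu)g = \pi(\mu)(Xg) + u\,\pi(\mu)g$ together with continuity of $S_{Xg}$ and $S_g$---is exactly the right way to close that gap.
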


\begin{proof}
Let $\mu,\la\in\R^2$, $\mu = (u,\eta)$, $\la = (a,b)$.
Then
\begin{align*}
  \pi(\mu+\la)g - \pi(\mu)g
  &= M_{\eta+b} T_{u+a} \, g - M_\eta T_u \, g
   = M_\eta \left( \pi(a,b) - I \right) T_u \, g \, .
\end{align*}
Now, since $T_u \, g \in \HH^1 (\R)$ with $(T_u \, g)' = T_u \, g'$,
Lemma~\ref{l:differentiable} shows that
\[
  \left(\pi(a,b) - I\right)T_u \, g
  = -a T_u \, g' + 2\pi i b X T_u \, g + \veps(a,b) \, ,
\]
where $\eps(a,b) = \eps_u(a,b) \in L^2(\R)$ satisfies
$\lim_{(a,b)\to (0,0)} \tfrac{\| \veps(a,b) \|_{L^2}}{\|(a,b)\|_2} = 0$.
Thus,
\begin{align*}
  \pi(\mu+\la)g - \pi(\mu)g
  &= -a M_\eta T_u g' + 2\pi i b M_\eta(X T_u \, g) + M_\eta \, \veps(a,b)\\
  &= -a\pi(\mu)g' + 2\pi i b X \pi(\mu) g + \widetilde{\veps}(a,b) \, ,
\end{align*}
where $\widetilde{\veps} := M_\eta \, \veps$.
As $\|\widetilde{\veps}(a,b)\|_{\LTwoIndex} = \|\veps(a,b)\|_{\LTwoIndex}$, the claim is proved.
\end{proof}

\section{Proof of Theorem~\ref{t:quanty}}
\label{s:quanty_proof}

As mentioned in the introduction, an upper bound in \eqref{e:main} is not difficult to achieve.
It even holds without the additional assumptions of $\Lambda$
having rational density or $(g,\Lambda)$ forming a Riesz sequence.

\begin{prop}\label{p:upper}
Let $g\in\HH^1(\R)$ and let $\Lambda$ be a lattice in $\R^2$.
Then
\[
  \dist\big(\pi(\mu)g,\calG(g,\Lambda)\big)
  \,\le\, \sqrt{\|g'\|_{\LTwoIndex}^2 + \|2\pi i X g\|_{\LTwoIndex}^2}
          \cdot \dist(\mu,\Lambda)
          \quad \text{for all} \;\; \mu\in\R^2 .
\]
\end{prop}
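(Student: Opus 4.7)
The plan is to reduce to the case in which $\mu$ is close to the lattice, after which the claim boils down to a Lipschitz-type bound on the time-frequency map at the origin. First I would exploit lattice invariance: pick $\lambda_0 \in \Lambda$ realizing $\dist(\mu,\Lambda)$, i.e.\ $\|\mu-\lambda_0\|_2 = \dist(\mu,\Lambda)$, and set $\nu := \mu - \lambda_0$. The ``in particular''-part of Lemma~\ref{l:commute} gives $\dist(\pi(\mu)g,\calG(g,\Lambda)) = \dist(\pi(\nu)g,\calG(g,\Lambda))$, and since $g$ itself lies in $\calG(g,\Lambda)$, the latter is bounded by $\|\pi(\nu)g - g\|_{L^2}$. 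Hence it suffices to prove
\[
  \|\pi(\nu)g - g\|_{L^2}
  \,\le\, \sqrt{\|g'\|_{L^2}^2 + \|2\pi i Xg\|_{L^2}^2}\cdot\|\nu\|_2
  \qquad \text{for all } \nu\in\R^2.
\]

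To establish this, I would write $\nu = (a,b)$ and split $\pi(a,b)g - g = M_b(T_a g - g) + (M_b g - g)$. Since $M_b$ is unitary, the triangle inequality yields $\|\pi(\nu)g - g\|_{L^2}\le\|T_a g - g\|_{L^2} + \|M_b g - g\|_{L^2}$, and each summand is bounded separately. For the translation term, Plancherel's theorem combined with the pointwise inequality $|e^{-2\pi i a\omega}-1|\le 2\pi|a\omega|$ and the identity $\|g'\|_{L^2} = \|2\pi\omega\widehat g\|_{L^2}$ yields $\|T_a g - g\|_{L^2}\le |a|\cdot\|g'\|_{L^2}$. For the modulation term, the analogous pointwise estimate $|e^{2\pi i b x}-1|\le 2\pi|bx|$ applied directly in $L^2$ gives $\|M_b g - g\|_{L^2}\le |b|\cdot\|2\pi i Xg\|_{L^2}$. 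Both bounds can be obtained equivalently from the fundamental theorem of calculus applied to the one-parameter groups $s\mapsto T_s g$ and $s\mapsto M_s g$, using $\tfrac{d}{ds}T_s g = -T_s g'$ and $\tfrac{d}{ds}M_s g = 2\pi i M_s(Xg)$ together with the fact that $T_s$ and $M_s$ are isometries.

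Putting the two estimates together and applying the Cauchy--Schwarz inequality in $\R^2$ yields
\[
  \|\pi(\nu)g - g\|_{L^2}
  \,\le\, |a|\cdot\|g'\|_{L^2} + |b|\cdot\|2\pi i Xg\|_{L^2}
  \,\le\, \sqrt{a^2+b^2}\cdot\sqrt{\|g'\|_{L^2}^2 + \|2\pi i Xg\|_{L^2}^2},
\]
which, since $\|\nu\|_2 = \dist(\mu,\Lambda)$, is precisely the claimed inequality. There is no real obstacle: the hypothesis $g\in\HH^1(\R)$ is calibrated exactly so that both $\|g'\|_{L^2}$ and $\|Xg\|_{L^2}$ are finite, which is all the argument needs. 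One could alternatively derive the estimate from Corollary~\ref{c:derivative} by integrating $S_g'$ along the segment from $0$ to $\nu$, but this is slightly less clean, because $\|X\pi(t\nu)g\|_{L^2}$ is not constant in $t$ (unlike $\|\pi(t\nu)g'\|_{L^2}$); the two-term translation/modulation decomposition above is the more direct route.
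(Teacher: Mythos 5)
Your proof is correct and follows essentially the same route as the paper: reduce via Lemma~\ref{l:commute} to the displacement $\nu = \mu - \lambda_0$, decompose $\pi(a,b)g - g = M_b(T_ag - g) + (M_bg - g)$, bound each term by the pointwise estimate $|e^{2ix}-1|\le 2|x|$ (via Plancherel for the translation piece), and finish with Cauchy--Schwarz. The side remarks about the semigroup derivation and about why you avoid Corollary~\ref{c:derivative} are accurate but not needed.
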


\begin{proof}
Let $\la\in\Lambda$ be a closest point (in Euclidean distance) in $\Lambda$ to $\mu$.
Then $(0,0)$ is a closest point in $\Lambda$ to $z:=\mu-\la$,
and thus $\dist(\mu,\Lambda) = \dist(z,\Lambda) = \|z\|_2$.
By Lemma \ref{l:commute} we have
\[
  \dist \big( \pi(\mu)g, \calG(g,\Lambda) \big)
  = \dist\big( \pi(z)g, \calG(g,\Lambda) \big)
  \le \| \pi(z)g - g \|_{\LTwoIndex} \, .
\]
Now, if $z = (u,\eta)$, then Plancherel's theorem shows that
\begin{align*}
  \| \pi(z)g - g \|_{\LTwoIndex}
  &\le \|
         (\pi(u,\eta) - \pi(0,\eta))g
       \|_{\LTwoIndex}
       + \| \pi(0,\eta)g - g \|_{\LTwoIndex} \\
  &= \|M_\eta (T_u - I) g \|_{\LTwoIndex} + \| (M_\eta - I) g \|_{\LTwoIndex} \\
  &= \| (M_{-u} - I) \widehat g \|_{\LTwoIndex} + \| (M_\eta - I) g \|_{\LTwoIndex} \, .
\end{align*}
Next, recall that $|e^{2 i x} - 1| = |e^{i x} - e^{- i x}| = 2 |\sin(x)| \leq 2 |x|$
for $x \in \R$.
Using this estimate, we observe for $f\in L^2(\R)$ with $\widehat{f} \in H^1(\R)$
and $\alpha\in\R$ that
\[
  \|(M_\alpha - I)f\|_{\LTwoIndex}^2
  =   \int
         \Big|
           e^{2\pi i\alpha x} - 1\Big
         |^2 \,
         |f(x)|^2
       \, dx
  \le 4 \pi^2 \alpha^2 \int x^2 \, |f(x)|^2 \, dx \, ,
\]
that is, $\| (M_\alpha - I)f \|_{\LTwoIndex}
          \le 2\pi |\alpha| \cdot \| X f \|_{\LTwoIndex}
          =   |\alpha| \cdot \| 2\pi i X f \|_{\LTwoIndex}$.
Hence, if we define $\Omega \widehat{f} (\omega) := \omega \, \wh f(\omega)$
for $\omega\in\R$ and $f\in\HH^1(\R)$, we find that
\[
  \dist\big(\pi(\mu)g,\calG(g,\Lambda)\big)
  \, \le \, |u| \cdot \| 2 \pi i \, \Omega \, \widehat{g} \|_{\LTwoIndex}
          + |\eta| \cdot \|2 \pi i X g\|_{\LTwoIndex} \, .
\]
Since $2\pi i\,\Omega \, \widehat{g} = \Fourier [g']$, Plancherel's theorem
and the Cauchy-Schwarz inequality yield the claim.
\end{proof}

\begin{rem}
If $\Lambda = A\Z^2$ with $A\in \GL(2,\R)$,
then the maximal distance of a point $\mu\in\R^2$ to the lattice $\Lambda$
is bounded above by $2^{-1/2}\|A\|_{\mathrm{op}}$.
Therefore, for each time-frequency shift $\pi(\mu)g$ of $g$ we have that
\[
  \dist \big(\pi(\mu)g,\calG(g,\Lambda)\big)
  \leq \sqrt{\frac{\| g' \|_{\LTwoIndex}^2 + \| 2\pi i X g \|_{\LTwoIndex}^2}{2}} \,
       \|A\|_{\mathrm{op}} \, .
\]
In other words, the better $g$ is localized in both time and frequency,
the closer the time-frequency shifts of $g$ scatter around $\calG(g,\Lambda)$.
However, due to the uncertainty principle (see e.g., \cite[Theorem 2.2.1]{g}),
the constant in the above inequality is easily seen to satisfy
$\sqrt{\frac{\| g' \|_{\LTwoIndex}^2 + \| 2\pi i X g \|_{\LTwoIndex}^2}{2}} \geq \sqrt{\pi}$.
\end{rem}

In the proof of the next proposition we consider matrix-valued
ordinary differential equations (ODEs) of the form
\begin{equation}\label{e:ODE}
  X'(t) = X(t)M(t),
\end{equation}
where $X : \R \to \C^{m {\times} n}$ and where $M : \R \to \C^{n{\times} n}$
has locally integrable entries.
A \emph{solution} of this ODE is a matrix function $X : \R \to \C^{m{\times} n}$
with (locally) absolutely continuous entries for which $X'(t) = X(t)M(t)$ holds for a.e.\ $t\in\R$.

\begin{lem}\label{l:unique}
  If $X_1$ and $X_2$ are two solutions to the ODE \eqref{e:ODE} such that ${X_1(0) = X_2(0)}$,
  then $X_1 (t) = X_2 (t)$ for \emph{all} $t \in \R$.
\end{lem}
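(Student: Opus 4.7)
The plan is to reduce the uniqueness statement to the trivial solution via Gr\"onwall's inequality. Set $Y(t) := X_1(t) - X_2(t)$; by linearity of the right-hand side of \eqref{e:ODE}, the matrix function $Y$ has locally absolutely continuous entries and satisfies $Y'(t) = Y(t) M(t)$ for a.e.\ $t \in \R$, together with $Y(0) = 0$. The goal is to show $Y \equiv 0$.

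By the fundamental theorem of calculus for locally absolutely continuous functions, applied entrywise, we have
\[
  Y(t) = \int_0^t Y(s) \, M(s) \, ds
  \qquad \text{for every } t \in \R .
\]
Fix any submultiplicative matrix norm $\|\cdot\|$ on $\CC^{m\times n}$ (and the corresponding norms for square matrices). Then, for $t \geq 0$,
\[
  \|Y(t)\| \,\le\, \int_0^t \|Y(s)\| \cdot \|M(s)\| \, ds ,
\]
where $s \mapsto \|M(s)\|$ lies in $L^1_{\mathrm{loc}}(\R)$ by assumption and $s \mapsto \|Y(s)\|$ is continuous. The classical (integral form of) Gr\"onwall's inequality, which is valid for $L^1_{\mathrm{loc}}$ kernels and hence applicable here, then yields
\[
  \|Y(t)\| \,\le\, 0 \cdot \exp\!\left( \int_0^t \|M(s)\| \, ds \right) \,=\, 0
  \qquad \text{for all } t \geq 0 .
\]
The case $t < 0$ is handled symmetrically, either by considering $\widetilde{Y}(t) := Y(-t)$, which solves a similar ODE with driving matrix $-M(-t) \in L^1_{\mathrm{loc}}$, or by flipping the orientation of the integral and running the same Gr\"onwall argument on $[t,0]$.

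There is no genuine obstacle here; the only point worth keeping in mind is that since $M$ is only assumed to have locally \emph{integrable} entries (not continuous), one must invoke Gr\"onwall's inequality in its integrable-kernel formulation rather than the continuous-kernel version usually stated in introductory ODE texts. Everything else is bookkeeping.
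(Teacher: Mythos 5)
Your proof is correct, but it takes a genuinely different route from the paper's. You invoke the integral form of Gr\"onwall's inequality (valid for $L^1_{\mathrm{loc}}$ kernels), applied to $\|Y(t)\|$, to conclude $Y \equiv 0$ directly. The paper instead argues topologically: it sets $G := \{t \in \R : X(t) = 0\}$, observes $G$ is closed and nonempty, and shows $G$ is open by a Neumann-series contraction argument on a small interval $I$ chosen so that $\int_I \|M\|_{\mathrm{op}} \le \tfrac12$; connectedness of $\R$ then forces $G = \R$. Both routes address the same subtlety---$M$ is only locally integrable, not continuous, so classical Picard--Lindel\"of does not apply verbatim. Your Gr\"onwall argument is shorter and more conceptually transparent, at the cost of having to cite (or re-derive) Gr\"onwall in its integrable-kernel formulation; the paper's contraction argument is longer but entirely self-contained, which is presumably why the authors preferred it for an appendix-style lemma. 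One small point of care in your write-up: when you write $\|Y(t)\| \le \int_0^t \|Y(s)\|\,\|M(s)\|\,ds$, the norm on $Y$ lives on $\CC^{m\times n}$ while $\|M(s)\|$ lives on $\CC^{n\times n}$; you need compatibility of these norms under the multiplication $\CC^{m\times n} \times \CC^{n\times n} \to \CC^{m\times n}$, which holds for (say) the operator norm or the Frobenius norm, but ``submultiplicative matrix norm'' is usually stated only for square matrices, so it is worth being explicit that you pick a compatible pair of norms.
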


\begin{proof}
  Since the classical ODE theory deals with continuously differentiable solutions to
  equations with coefficient functions fulfilling a Lipschitz condition, we cannot
  quite apply that theory.
  As we will see, however, the same proof idea still works.

  Indeed, since $X := X_1 - X_2$ is a solution to the ODE $X' = X \cdot M$ with $X(0) = 0$,
  it suffices to show that any such function satisfies $X \equiv 0$.
  Since $X$ is continuous, the set $\Gamma := \{t \in \R \colon X(t) = 0\}$ is closed.
  Since $\R$ is connected and since $0 \in \Gamma \neq \emptyset$,
  it is therefore enough to show that $\Gamma$ is also open.

  Thus, let $x_0 \in \Gamma$ be fixed but arbitrary.
  Since $M$ is locally integrable, there is some $\eps > 0$ such that
  $\int_{x_0 - \eps}^{x_0 + \eps} \|M(t)\|_{\mathrm{op}} \, d t \leq \frac{1}{2}$.
  Now, set $I := [x_0 - \eps, x_0 + \eps]$, and denote by $\boldX := C(I; \C^{m {\times} n})$
  the space of all continuous functions $f : I \to \C^{m {\times} n}$, equipped with the norm
  $\|f\|_{\boldX} := \sup_{t \in I} \|f(t)\|_{\mathrm{op}}$.
  It is not hard to see that $\boldX$ is a Banach space.
  Furthermore, define the linear operator
  \[
    T : \boldX \to \boldX, f \mapsto T f
    \quad \text{where} \quad
    (T f)(t) := \int_{x_0}^t f(s) \, M(s) \, d s \quad \text{for } t \in I \, .
  \]
  Note that indeed $T f \in \boldX$ if $f \in \boldX$, since $M$ is locally integrable, so that
  $f \cdot M$ is integrable on $I$.
  Next, observe
  \[
    \| Tf (t) \|_{\mathrm{op}}
    \leq \Big|
           \int_{x_0}^t \|f(s)\|_{\mathrm{op}} \cdot \|M(s)\|_{\mathrm{op}} \, d s
         \Big|
    \leq \|f\|_{\boldX} \cdot \int_I \|M(s)\|_{\mathrm{op}} \, d s
    \leq \frac{1}{2} \, \|f\|_{\boldX} \, ,
  \]
  and hence $\|T\|_{\boldX \to \boldX} \leq \frac{1}{2} < 1$.
  From this, it follows using a Neumann series argument that $\id - T : \boldX \to \boldX$
  is invertible.

  Finally, since $X(x_0) = 0$ and $X'(t) = X(t) M(t)$, we have
  \[
    X(t)
    = X(t) - X(x_0)
    = \int_{x_0}^t X'(s) \, d s
    = \int_{x_0}^t X(s) M(s) \, d s
    = \big( T [X|_{I}] \big)(t)
  \]
  for all $t \in I$, which means that $f := X|_{I}$ satisfies $(\id - T) f = 0$.
  Hence $f = 0$, which means that $X \equiv 0$ on $(x_0 - \eps, x_0 + \eps)$.
  Thus, $(x_0 - \eps, x_0 + \eps) \subset \Gamma$, so that $\Gamma$ is open.
\end{proof}


The following proposition can be seen as a weak Balian-Low-type theorem for subspaces.
For a comparison with related results, see Remark~\ref{r:weak} below.

\begin{prop}\label{p:weak}
Let $g\in\HH^1(\R)$ and let $\Lambda\subset\R^2$ be a lattice of rational density
such that $(g,\Lambda)$ is a Riesz basis for its closed linear span $\calG(g,\Lambda)$.
Then
\[
  -ag' + 2\pi i b X g \notin \calG(g,\Lambda)
  \quad \text{for all} \;\; (a,b)\in\R^2\backslash\{(0,0)\} .
\]
\end{prop}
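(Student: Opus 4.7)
My plan is to translate the hypothesis ``$-ag' + 2\pi i b X g \in \calG(g,\Lambda)$'' via the Zak transform into a matrix-valued first-order ODE satisfied by $A_g$ along characteristic lines, and then combine the ODE uniqueness from Lemma~\ref{l:unique} with the quasi-periodicity of the Zak transform to extract a rigidity that contradicts $(g,\Lambda)$ being a Riesz sequence. As in the proof of Proposition~\ref{p:dual}, I would first reduce to the separable case by picking $B \in \SL(2,\R)$ with $B \Lambda = Q^{-1}\Z \times P\Z$ and setting $g_1 := U_B g \in \HH^1(\R)$, so that $(g_1, B\Lambda)$ is a Riesz basis for $U_B \calG(g,\Lambda)$. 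Since the intertwining $U_B \pi(z) = c(z)\, \pi(Bz)\, U_B$ has $c(0)=1$ and $c$ quadratic near $0$, differentiating at $z=(0,0)$ via Corollary~\ref{c:derivative} gives
\[
  U_B\bigl(-ag' + 2\pi i b X g\bigr) = -(B(a,b))_1\, g_1' + 2\pi i (B(a,b))_2\, X g_1 ,
\]
and the invertibility of $B$ reduces the proposition to the case $\Lambda = Q^{-1}\Z \times P\Z$.

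Writing $G := Zg$ and $A := A_g$, Lemma~\ref{l:gZg} yields $G \in H^1_{\mathrm{loc}}(\R^2)$ and $Zf = LG + 2\pi i b x G$, with $L := -a\partial_1 - b\partial_2$. The projection formula for $\bP$ in Lemma~\ref{l:dist} shows that the hypothesis is equivalent to $(\vectorizeOperator Zf)(x,\omega) \in \ran A(x,\omega)$ a.e. Unpacking via $\vectorizeOperator G = \sqrt{P}\, A\, e_0$ (using that $L$ commutes with the $(k/P)$-shifts) gives $[LA + 2\pi i b (xI_P + D_P) A]\, e_0 \in \ran A$, with $D_P := \diag(k/P)_{k=0}^{P-1}$. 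Applying Lemma~\ref{l:commute} to $\lambda = (\ell/Q, 0) \in \Lambda$ for $\ell = 0, \dots, Q-1$, the same conclusion holds with $e_0$ replaced by each $e_\ell$; since $x I_P \cdot A$ has columns trivially in $\ran A$, this reduces to the matrix identity
\[
  LA + 2\pi i b D_P A = A N \qquad \text{a.e.\ on } \R^2
\]
for some measurable $N : \R^2 \to \CC^{Q \times Q}$.

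Along a characteristic $\gamma(t) := (x_0 - at, \omega_0 - bt)$, the substitution $\tilde A(t) := e^{2\pi i b t D_P} A(\gamma(t))$ eliminates the $D_P$-term and turns the above into $\tilde A'(t) = \tilde A(t)\, N(\gamma(t))$, which is exactly the ODE form handled by Lemma~\ref{l:unique}; hence $\tilde A$ is uniquely determined by $\tilde A(0) = A(x_0, \omega_0)$. Picking $(x_0,\omega_0)$ in the full-measure set where $A$ has full column rank (Lemma~\ref{l:matrix}(a)), I would iterate the characteristic flow to times $T$ for which $\gamma(T) \equiv \gamma(0) \pmod{\Z^2}$ (these exist when $b/a \in \Q$ and are densely approximable otherwise) and use the quasi-periodicity $A(x+1,\omega) = e^{2\pi i\omega} A(x,\omega)$, $A(x,\omega+1) = A(x,\omega)$ to compute the monodromy of the ODE, deriving from it a rigidity of $A$ (hence of $G$) that is incompatible with $A$ being full-column-rank almost everywhere. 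The main obstacle is precisely this final step: since $N$ is not scalar when $Q > 1$, the transition matrix $\Psi$ is not simply $\exp \int N$, and extracting the contradiction requires carefully tracking the matrix-valued monodromy together with the $x$-periodicity; moreover, the irrational-slope case $b/a \notin \Q$ requires an ergodic or density argument to close the gap from ``dense orbits on the torus'' to ``every orbit''.
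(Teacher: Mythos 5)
Your setup — reduction to the separable lattice, the Zak-transform rewriting, and the derivation of the matrix ODE $\tilde A'(t) = \tilde A(t)\,N(\gamma(t))$ along characteristic lines — matches the paper's Steps~1--3, and the cancellation of the $D_P$-term by the substitution $e^{2\pi i t b D_P}$ is exactly right. But the final step you flag as the ``main obstacle'' is a genuine gap, and the monodromy/return-map strategy you sketch is not how the argument closes: for $Q>1$ the coefficient $N$ is a non-constant $Q\times Q$ matrix, there is no clean exponential formula, and the periodic-orbit reasoning (with its extra ergodicity worries when $b/a \notin \Q$) doesn't obviously produce a contradiction with full column rank.

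The missing idea is much simpler and makes the monodromy discussion unnecessary. Fix a point $v$ where $A(v)$ has full column rank, and set $C_v := A(v)\big(A(v)^*A(v)\big)^{-1}A(v)^*$, the orthogonal projection of $\CC^P$ onto $\ran A(v)$. Because the coefficient $N$ multiplies $\tilde A$ on the \emph{right}, the matrix $C_v\tilde A(t)$ satisfies the very same ODE $X' = X N$; and since $C_v A(v) = A(v)$, it has the same initial value $C_v\tilde A(0) = \tilde A(0) = A(v)$. By the uniqueness Lemma~\ref{l:unique}, $C_v\tilde A(t) = \tilde A(t)$ for all $t$, i.e.\ every column of $\tilde A(t)$ lies in $\ran A(v)$. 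Via the projection formula in Lemma~\ref{l:dist} and the quasi-periodicity of $Zg$, this says precisely that $\dist^2\big(\pi(-ta,-tb)g,\,\calG(g,\Lambda)\big) = \|g\|_{L^2}^2 - \|g\|_{L^2}^2 = 0$, so $\pi(-ta,-tb)g\in\calG(g,\Lambda)$ for \emph{every} $t\in\R$. Theorem~\ref{t:clpp} then forces $(-ta,-tb)\in\Lambda$ for all $t$, which is impossible for a discrete lattice when $(a,b)\neq(0,0)$. Note in particular that this route never distinguishes between $b/a$ rational or irrational, and there is no need to compute a transition matrix; the comparison solution $C_v\tilde A$ bypasses it entirely.
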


\begin{proof}
Let us assume towards a contradiction that $\gamma := -a g' + 2\pi i b X g \in \calG(g, \Lambda)$
for some $(a,b)\in\R^2\backslash\{(0,0)\}$.
We divide the proof into five steps.

\medskip{}

\textbf{Step 1:}
In the first four steps of the proof,
we only consider separable lattices of the form
$\Lambda = \tfrac{1}{Q} \Z {\times} P \Z$ for certain $P, Q \in \N$.

Let $G := Z g \in L^2_{\mathrm{loc}} (\R^2)$ denote the Zak transform of $g$,
and recall from Lemma~\ref{l:matrix} the definition of the function
$A_g \in L^2_{\mathrm{loc}} (\R^2; \C^{P {\times} Q})$ given by
\[
  A_g (x,\omega) = P^{-1/2}
                   \cdot \big(
                           G (x + \tfrac{k}{P} - \tfrac{\ell}{Q}, \omega)
                         \big)_{k,\ell=0}^{P-1, Q-1} \, .
\]
Since $g \in \HH^1(\R)$, Lemma~\ref{l:gZg} shows that $G \in H^{1}_{\mathrm{loc}} (\R^2)$,
so that all component functions of $A_g$ are in $H^1_{\mathrm{loc}} (\R^2)$ as well.
In this step, we show that $A_g$ satisfies a certain differential equation;
see Equation~\eqref{eq:fundament} below.


Since $\gamma \in \calG(g, \Lambda)$ and $\Lambda = \tfrac{1}{Q} \Z {\times} P \Z$,
Lemma~\ref{lem:GaborSpaceInvariance} shows $\pi(\tfrac{L}{Q}, 0) \gamma \in \calG(g, \Lambda)$
for each $L\in\{0,\ldots,Q-1\}$.
This means that for each $L \in \{0,\dots,Q-1\}$ there is a sequence
$(c_{m,n}^{(L)})_{m,n \in \Z} \in \ell^2(\Z^2)$ such that
\[
  \pi(\tfrac{L}{Q}, 0) \gamma
  = \sum_{m, n\in\Z}
      c_{m,n}^{(L)} \, \pi(\tfrac{n}{Q}, Pm)g
  = \sum_{\ell=0}^{Q-1}
      \sum_{m, s \in \Z}
        c_{m, sQ + \ell}^{(L)} \,\,
        \pi(s + \tfrac{\ell}{Q}, Pm) g \, .
\]
By using the properties (a)--(c) of the Zak transform
listed below Equation~\eqref{eq:ZakTransformDefinition}, this implies
for each $L \in \{0, \dots, Q-1\}$ that
\begin{align*}
  (Z \gamma) \big(x \!-\! \tfrac{L}{Q}, \omega \big)
  & \!=\! Z \big[ \pi(\tfrac{L}{Q}, 0) \gamma \big] (x,\omega) 
    \!=\! \sum_{\ell=0}^{Q-1}
        \sum_{m, s \in \Z} \!
          c_{m, sQ + \ell}^{(L)} \,
          Z \big[ \pi(s \!+\! \tfrac{\ell}{Q}, P m) g \big] (x,\omega) \\
  & \!=\! - \sum_{\ell = 0}^{Q-1}
          f_\ell^{(L)} (x,\omega) \cdot G \big( x - \tfrac{\ell}{Q}, \omega \big)
\end{align*}
where $f_\ell^{(L)}(x,\omega)
       := - \sum_{m, s \in \Z}
              c_{m,sQ+\ell}^{(L)} e^{2\pi i(P m x - s \omega)}$.
Note that each $f_\ell^{(L)}$ is locally square-integrable on $\R^2$
and $(\tfrac{1}{P}, 1)$-periodic.

Now, recall from Lemma~\ref{l:gZg} that
$(\partial_2 G) (x,\omega) \!=\! 2\pi i \big( x \, G(x,\omega) - Z(Xg)(x,\omega) \big)$
and $\partial_1 G = Z g'$.
Therefore,
\begin{align*}
  (Z \gamma)(x,\omega)
  &= Z[- a g' + 2 \pi i b X g] (x,\omega) \\
  &= - a \cdot \partial_1 G (x,\omega)
    + 2 \pi i b \cdot x\, G (x,\omega)
    - b \cdot \partial_2 G (x,\omega).
\end{align*}
Thus, we arrive at
\begin{align*}
  & a \, \partial_1 G (x - \tfrac{L}{Q}, \omega) + b \, \partial_2 G(x - \tfrac{L}{Q}, \omega) \\
  &= 2\pi i b \, \big( x - \tfrac{L}{Q} \big) \, G(x -\tfrac{L}{Q}, \omega)
     + \sum_{\ell=0}^{Q-1}
        f_\ell^{(L)}(x,\omega) \,
        G (x - \tfrac{\ell}{Q}, \omega).
\end{align*}
Denoting by $e_0, \dots, e_{Q-1}$ the standard basis vectors of $\C^Q$,
plugging $x + \frac{k}{P}$ instead of $x$ into the preceding displayed equation,
and recalling that $f_\ell^{(L)}$ is $(\frac{1}{P}, 1)$-periodic,
we obtain for each $L \in \{0,\dots,Q-1\}$ that
\begin{align*}
  & a \, \partial_1 A_g (x,\omega) \, e_L + b \, \partial_2 A_g (x,\omega) \, e_L \\
  & = 2\pi i b \big[
                \big( x - \tfrac{L}{Q} \big) A_g (x,\omega) e_L
                + D_P A_g (x,\omega) \, e_L
               \big]
      + A_g (x,\omega) \, f^{(L)} (x,\omega) \, ,
\end{align*}
where $f^{(L)} := (f_\ell^{(L)})_{\ell=0}^{Q-1}$ and $D_P := \operatorname{diag}(k/P)_{k=0}^{P-1}$.
This leads to
\begin{align*}
  a \, \partial_1 A_g (x,\omega) + b \, \partial_2 A_g (x,\omega)
  &= 2\pi i b [(x A_g(x,\omega) + D_P A_g (x,\omega)] \\
  &\quad + A_g(x,\omega) (F(x,\omega) - 2\pi i b D_Q) \, ,
\end{align*}
where $D_Q := \operatorname{diag}(L/Q)_{L=0}^{Q-1}$
and $F := [f^{(0)} \,|\, \ldots \,|\, f^{(Q-1)}] \in L_{\mathrm{loc}}^2 (\R^2; \C^{Q {\times} Q})$.
As a consequence of Fubini's theorem (and since $(a,b) \neq (0,0)$), there is
a null-set $N_0 \subset \R^2$ such that
$(t \mapsto F(x + t a, \omega + t b)) \in L_{\mathrm{loc}}^2 (\R; \C^{Q {\times} Q})$
for all $(x,\omega) \in \R^2 \backslash N_0$.

Note that the preceding displayed equation holds for almost all $(x,\omega) \in \R^2$.
Therefore, if we let $v_t := v + t \, (a,b)$ for $v \in \R^2$ and $t \in \R$,
then Lemma~\ref{lem:FubiniGeneralDirection} yields a null-set $N_1 \subset \R^2$ such that
if $v = (x,\omega) \in \R^2 \backslash N_1$, then
\begin{equation}
  \begin{split}
    & a \, (\partial_1 A_g) (v_t) + b \, (\partial_2 A_g) (v_t) \\
    & = 2 \pi i b \big[
                    (x + t a) \, A_g (v_t) + D_P A_g (v_t)
                  \big]
        + A_g (v_t) \cdot \big( F(v_t) - 2 \pi i b \, D_Q \big) \\
    & = 2 \pi i b \, D_P A_g (v_t) + A_g (v_t) [2 \pi i b (x + ta) + F(v_t) - 2 \pi i b \, D_Q ] \\
    & = 2 \pi i b \, D_P A_g (v_t) + A_g (v_t) W_v (t)
  \end{split}
  \label{eq:fundament}
\end{equation}
for almost all $t \in \R$.
In the last step we introduced the matrix
\[
  W_v (t) := 2 \pi i b (x + ta) I_Q + F(v_t) - 2 \pi i b \, D_Q \in \C^{Q {\times} Q},
  \quad t \in \R ,
\]
where $I_Q$ denotes the $Q$-dimensional identity matrix.
Note $W_v \in L_{\mathrm{loc}}^2 (\R; \C^{Q {\times} Q})$ for all $v \in \R^2 \backslash N_0$.

\medskip{}

\textbf{Step 2:}
In this step, we construct a particularly nice representative of $G = Z g$.

Recall from Step~1 that $G \in H_{\mathrm{loc}}^1(\R)$.
Next, define ${\varrho := (a,b) \in \R^2 \backslash \{0\}}$,
and choose $\theta \in \R^2$ with $\|\theta\|_2 = 1$ and $\theta \perp \varrho$.
Define ${T : \R^2 \to \R^2, (t,s) \mapsto t \varrho + s \theta}$,
and note that $T$ is linear and bijective, so that the same holds also for $T^{-1}$.
In particular, $T$ and $T^{-1}$ are Lipschitz continuous, and thus map null-sets to null-sets.
Furthermore, since $T$ and $T^{-1}$ are Lipschitz continuous,
the change-of-variables formula for Sobolev functions
(see for instance \cite[Theorem 2.2.2]{ZiemerWeaklyDifferentiable}) shows that
$\widetilde{G} := G \circ T \in H^1_{\mathrm{loc}} (\R^2)$, and that
\begin{align}
    D \widetilde{G} (t,s)
    & = D G (T (t,s)) \cdot D T (t, s)
      = \big( [\partial_1 G] (t \varrho + s \theta), [\partial_2 G] (t \varrho + s \theta) \big)
        \cdot \left( \begin{matrix} a & \theta_1 \\ b & \theta_2 \end{matrix} \right)
        \nonumber \\
    & = \big(
          a \, [\partial_1 G] (t \varrho + s \theta) + b \, [\partial_2 G] (t \varrho + s \theta),
          \ast
        \big)
  \label{eq:LinearTransformedDerivative}
\end{align}
for almost all $(t,s) \in \R^2$.
By Lemma~\ref{l:andrei}, there is a null-set $N_2 \subset \R$ such that
for all $s \in \R \backslash N_2$, Equation~\eqref{eq:LinearTransformedDerivative}
holds for almost all $t \in \R$.

Lemma~\ref{l:slices} yields a null-set $N_3 \subset \R$,
and a (pointwise defined) Borel function $\widetilde{G}_0 : \R^2 \to \C$
such that $\widetilde{G}_0 = \widetilde{G}$ almost everywhere, and such that
for all $s \in \R \backslash N_3$, the function
$t \mapsto \widetilde{G}_0 (t, s)$ is continuous and in $H^1_{\mathrm{loc}} (\R)$ with
$\frac{d}{dt} \widetilde{G}_0 (t,s) = (\partial_1 \widetilde{G}) (t,s)$ almost everywhere.
In view of Equation~\eqref{eq:LinearTransformedDerivative},
we thus see for all $s \in \R \backslash (N_2 \cup N_3)$ that
\[
  \frac{d}{dt} \widetilde{G}_0 (t,s)
  = a \, [\partial_1 G] (t \varrho + s \theta) + b \, [\partial_2 G] (t \varrho + s \theta)
  \quad \text{for almost all } t \in \R .
\]

Note that since $\widetilde{G}_0 = \widetilde{G} = G \circ T$ almost everywhere and since
$T$ and $T^{-1}$ map null-sets to null-sets, we have $G = \widetilde{G}_0 \circ T^{-1} =: G_0$
almost everywhere.
By Lemma~\ref{lem:FubiniGeneralDirection}, there is thus a null-set $N_4 \subset \R^2$
such that
\begin{equation}
  \forall \, (x,\omega) \in \R^2 \backslash N_4 \colon \quad
  G_0(x + ta, \omega + tb) = G(x+ta, \omega + tb)
  \text{ for a.e. } t \in \R \, .
  \label{eq:GIsGNaughtOnSlices}
\end{equation}

Since $T$ is Lipschitz continuous,
the set $N_5 := T (\R {\times} (N_2 \cup N_3)) \subset \R^2$ is a null-set.
For any $(x,\omega) \in \R^2 \backslash N_5$, we have
$(x, \omega) = T (t_0, s_0) = t_0 \varrho + s_0 \theta$
for certain $(t_0, s_0) \in \R {\times} (\R \backslash (N_2 \cup N_3))$.
By the properties from above, this means that the map
\begin{align*}
  \R \to     \C, \quad
  t  \mapsto G_0 (x + t a, \omega + t b)
           &= G_0 \big( (x,\omega) + t \varrho \big) \\
           &= G_0 \big( (t+t_0) \varrho + s_0 \theta \big) 
            = \widetilde{G}_0 (t+t_0, s_0)
\end{align*}
is continuous and in $H^1_{\mathrm{loc}} (\R)$ with derivative
\begin{align}
  \frac{d}{dt} G_0 (x + t a, \omega + t b)
  & = \frac{d}{dt} \widetilde{G}_0 (t + t_0, s_0) \nonumber \\
  & = a \, [\partial_1 G] \big( (t+t_0) \varrho + s_0 \theta \big)
      + b \, [\partial_2 G] \big( (t+t_0) \varrho + s_0 \theta \big) \nonumber \\
  & = a \, [\partial_1 G] (x + t a, \omega + t b) + b \, [\partial_2 G] (x + t a, \omega + t b)
  \label{eq:NiceRepresentativeDerivative}
\end{align}
for almost all $t \in \R$, for each fixed $(x,\omega) \in \R^2 \backslash N_5$.

Finally, let
\(
  N_6
  := \bigcup_{k,\ell \in \Z}
       \big( (N_4 \cup N_5) + (\tfrac{\ell}{Q} - \tfrac{k}{P}, 0) \big) \subset \R^2
\),
which is a null-set.
If $(x,\omega) \in \R^2 \backslash N_6$,
then $\big( x + \tfrac{k}{P} - \tfrac{\ell}{Q}, \omega \big) \in \R^2 \backslash (N_4 \cup N_5)$
for all $k,\ell \in \Z$.

\medskip{}

\textbf{Step 3:} In this step, we use the ``nice'' representative $G_0$ of $G$ to construct
for almost all $v = (x,\omega) \in \R^2$ two locally absolutely continuous functions
${R_v : \R \to \C^{P {\times} Q}}$ and $L_v : \R \to \C^{P {\times} Q}$ which satisfy
the differential equations $R_v '(t) = R_v (t) W_v (t)$ and $L_v '(t) = L_v (t) W_v (t)$
for almost all $t \in \R$, for the matrix function
$W_v \in L_{\mathrm{loc}}^2( \R ; \C^{Q {\times} Q})$ defined in Step~1.
We then use this differential equation to deduce $R_v = L_v$.
In Step~4 we will finally employ this identity to complete the proof for the case
$\Lambda = \tfrac{1}{Q}\Z {\times} P \Z$.

First, define
\[
  A : \quad
  \R^2 \to \C^{P {\times} Q}, \quad
  (x,\omega) \mapsto P^{-1/2} \cdot \big(
                                      G_0 (x + \tfrac{k}{P} - \tfrac{\ell}{Q}, \omega)
                                    \big)_{k,\ell = 0}^{P-1,Q-1} \, ,
\]
noting $A =\! A_g$ almost everywhere.
Next, note for ${v = (x,\omega) \in \R^2 \backslash (N_0 \!\cup\! N_1 \!\cup\! N_6)}$ that
$(x + \tfrac{k}{P} - \tfrac{\ell}{Q}, \omega) \in \R^2 \backslash (N_4 \cup N_5)$
for all $k,\ell \in \Z$, so that Equations~\eqref{eq:NiceRepresentativeDerivative},
\eqref{eq:fundament}, and \eqref{eq:GIsGNaughtOnSlices} show that
the function ${E_v : \R \to \C^{P {\times} Q}, t \mapsto A(v_t) = A( x + t a, \omega + t b )}$
is locally absolutely continuous and satisfies
  \begin{align}
      E_v ' (t) 
    & = P^{-1/2}
        \Big(
          a \, (\partial_1 G) \big( x + \tfrac{k}{P} - \tfrac{\ell}{Q} + t a, \omega + t b \big)
          \nonumber \\
          & \qquad \qquad \qquad
          + b \, (\partial_2 G) \big( x + \tfrac{k}{P} - \tfrac{\ell}{Q} + t a, \omega + t b \big)
        \Big)_{k,\ell = 0}^{P-1,Q-1} \nonumber \\
    & = a \, (\partial_1 A_g) ( x + t a, \omega + t b)
        + b \, (\partial_2 A_g) (x + t a, \omega + t b) \nonumber \\
    & = a \, [\partial_1 A_g] (v_t) + b \, [\partial_2 A_g] (v_t) \nonumber \\
    & = 2 \pi i b \, D_P A_g (v_t) + A_g (v_t) W_v (t) \nonumber \\
    & = 2 \pi i b \, D_P A (v_t) + A (v_t) W_v (t)
      = 2 \pi i b \, D_P \, E_v (t) + E_v (t) W_v (t)
  \label{eq:EDerivative}
  \end{align}
for almost all $t \in \R$.

Next, Lemma~\ref{l:matrix} shows that $\essinf_{z \in \R^2} \sigma_0 (A_g (z)) > 0$,
since $(g,\Lambda)$ is a Riesz sequence.
Hence, we also have $\essinf_{z \in \R^2} \sigma_0 (A (z)) > 0$,
which means that $(A^\ast A) (x,\omega)$ is invertible for almost all $(x,\omega) \in \R^2$,
say for all $(x,\omega) \in \R^2 \backslash N_7$.

For $v = (x,\omega) \in \R^2 \backslash (N_0 \cup N_1 \cup N_6 \cup N_7)$, set
$C_v := A(v) \big( A^\ast (v) A (v) \big)^{-1} A^\ast (v)$ (so that $C_v \in \C^{P {\times} P}$)
and furthermore
\begin{align*}
  R_v & : \quad \R \to \C^{P {\times} Q}, \quad
          t \mapsto e^{-2 \pi i t b \, D_P} \,\, A(x + t a, \omega + t b)
            =       e^{-2 \pi i t b \, D_P} \,\, E_v (t) , \\
  L_v & : \quad \R \to \C^{P {\times} Q}, \quad
          t \mapsto C_v \cdot e^{- 2 \pi i t b \, D_P} \cdot A(x + ta, \omega + tb)
            =       C_v \cdot R_v (t) ,
\end{align*}
where as before $D_P = \diag (k / P)_{k = 0,\dots,P-1} \in \R^{P {\times} P}$.

Since $v = (x,\omega) \in \R^2 \backslash (N_0 \cup N_1 \cup N_6)$,
we see as a consequence of the product rule for Sobolev functions
(see for instance \cite[Section~4.25]{AltLinearFunctionalAnalysis})
and of Equation~\eqref{eq:EDerivative} that $R_v$ is locally absolutely continuous, with
\[
  R_v ' (t)
  = - 2 \pi i b D_P R_v (t)
    + e^{-2 \pi i t b \, D_P} E_v ' (t) \\
 = R_v (t) \, W_v (t)
    \quad \text{for almost all } t \in \R .
\]
where the last equality follows from Equation~\eqref{eq:EDerivative}
combined with the elementary identity ${e^{-2 \pi i t b \, D_P} D_P = D_P \, e^{-2 \pi i t b \, D_P}}$.
This easily implies that $L_v$ is locally absolutely continuous as well,
with $L_v ' (t) = C_v \, R_v ' (t) = C_v \, R_v (t) \, W_v (t) = L_v (t) \, W_v (t)$
for almost all $t \in \R$.
Finally, note that
\[
  L_v (0)
  = C_v \cdot E_v (0)
  = A(v) \big( A^\ast (v) A(v) \big)^{-1} A^\ast (v) A(v)
  = A(v)
  = R_v (0) \, .
\]
Therefore, Lemma~\ref{l:unique} shows $L_v (t) = R_v (t)$ for all
$v \in \R^2 \backslash (N_0 \cup N_1 \cup N_6 \cup N_7)$ and all $t \in \R$.

\medskip{}

\textbf{Step 4:} We complete the proof for the case $\Lambda = Q^{-1} \Z {\times} P \Z$.
To this end, let $t \in \R$ be arbitrary, and note that the matrix function
$H_{(-ta, -tb)}$ defined in Lemma~\ref{l:dist} satisfies
for \emph{almost} all $v = (x,\omega) \in \R^2$ that
\begin{align*}
& \lefteqn{   H_{(-ta, -tb)} (x,\omega) }\\
  & = P^{1/2}
      \cdot A_g (v) \cdot \big( A_g^\ast (v) A_g (v) \big)^{-1} A_g^\ast (v)
      \cdot e^{- 2 \pi i t b \, D_P}
      \cdot A_g (x + ta, \omega + t b) \\
  & = P^{1/2}
      \cdot C_v \cdot e^{-2 \pi i t b \, D_P} \cdot A(x + ta, \omega + t b) \\
  & = P^{1/2} \cdot L_v (t)
    = P^{1/2} \cdot R_v (t) .
\end{align*}
Hence,
\begin{align*}
  \|H_{(-ta, -tb)} (x,\omega) e_0\|_{\C^P}^2
  &  = P \, \|R_v (t) e_0\|_{\C^P}^2
    = P \, \|A(v_t) e_0\|_{\C^P}^2
    = P \, \|A_g (v_t) e_0\|_{\C^P}^2 \\
  & = \sum_{k=0}^{P-1} \big| G \big( x + ta + \tfrac{k}{P}, \omega + t b \big) \big|^2
  \quad \text{for a.e. } (x,\omega) \in \R^2 .
\end{align*}
By Lemma~\ref{l:dist} and by the quasi-periodicity of $G = Z g$
(which implies that $|G|$ is $(1,1)$-periodic), this implies that
\begin{align*}
  \dist^2\big(\pi(-ta,-tb)g,\calG(g,\Lambda)\big) 
  &= \|g\|_{\LTwoIndex}^2 \!- \!\! \int_0^1 \!\!
                   \int_0^{\frac{1}{P}} \,
                     \sum_{k=0}^{P-1}
                       \big|
                         G \big( x \!+\! t a \!+\! \tfrac{k}{P}, \omega \!+\! t b \big)
                       \big|^2
                   \,dx
                 \,d\omega \\
  &= \|g\|_{\LTwoIndex}^2 - \int_0^1
                   \int_0^{1}
                     |G(x+ta,\omega+tb)|^2
                   \,dx
                 \,d\omega\\
  &= \|g\|_{\LTwoIndex}^2 - \int_0^1
                   \int_0^{1}
                     |G(x,\omega)|^2
                   \,dx
                 \,d\omega
   = 0 \, .
\end{align*}
That is, $\pi(- t a, - t b) g \in \calG(g, \Lambda)$ for each $t \in \R$.
By Theorem \ref{t:clpp}, this means that $(-ta, -tb) \in \Lambda$ for every $t \in \R$.
Because of $(a,b) \neq (0,0)$ and since $\Lambda \subset \R^2$ is discrete,
this yields the desired contradiction.

\medskip{}

\textbf{Step 5:} Let $\Lambda\subset\R^2$ be an arbitrary lattice of rational density,
and assume again that $-a g' + 2\pi i b X g \in \calG(g,\Lambda)$ for some $a,b \in \R$.
Then there exists a matrix $B\in \GL(2,\R)$ with $\det B = 1$ and certain $P,Q \in \N$
such that ${\Lambda_1 := B\Lambda = Q^{-1}\Z{\times} P\Z}$.
With the symplectic operator $U_B$ (see \eqref{e:mp}), set $g_1 := U_B \, g$.
Then $(g_1,\Lambda_1)$ is a Riesz basis for $\calG(g_1,\Lambda_1) = U_B \, \calG(g,\Lambda)$ and,
as $\HH^1(\R)$ is invariant under symplectic operators
(see the discussion after Equation~\eqref{eq:MetaplecticMultiplicative}),
we have $g_1 \in \HH^1(\R)$.
For $f \in \HH^1(\R)$, let us set $T_f(x,\omega) := \rho(x,\omega)f$,
$x,\omega\in\R$, cf.~\eqref{e:rho}.
Using Corollary~\ref{c:derivative} we find that
\begin{align*}
&  \lefteqn{T_f'(x,\omega)\smallvek a b} \\
  &= e^{-\pi ix\omega}
     \left[
       \left(
         -\pi i\omega S_f(x,\omega) + \partial_1 S_f(x,\omega)
       \right)
       a
       + \left(
           -\pi ix S_f(x,\omega) + \partial_2 S_f(x,\omega)
         \right)
         b
     \right]\\
  &= e^{-\pi ix\omega}
     \left[
       -\pi i(a\omega+bx) S_f(x,\omega)
       -a \pi(x,\omega) f'
       + 2\pi ibX\pi(x,\omega) f
     \right].
\end{align*}
In particular,
\begin{equation}
  T_f'(0,0)\smallvek a b = -a f' + 2\pi i b X f .
  \label{eq:TDerivative}
\end{equation}
We have (see \eqref{e:mp})
\[
  U_B \, T_g(x,\omega)
  = \rho(B \smallvek{x}{\omega}) g_1
  = T_{g_1}(B \smallvek{x}{\omega}) \, .
\]
Differentiating this with respect to $(x,\omega)$
gives $U_B \, T_g'(x,\omega) = T_{g_1}' (B \smallvek{x}{\omega}) \circ B$.
Hence, by Equation~\eqref{eq:TDerivative}, we see that
\[
  U_B \, (- a g' + 2\pi i b X g)
  = U_B \, T_g'(0,0) \smallvek{a}{b}
  = T_{g_1}'(0,0) \big( B \smallvek{a}{b} \big)
  = -\alpha g_1' + 2\pi i \beta X g_1 \, ,
\]
where $\smallvek{\alpha}{\beta} = B \smallvek{a}{b}$.
That is, $-\alpha g_1' + 2\pi i \beta X g_1 \in U_B \calG (g,\Lambda) = \calG(g_1, \Lambda_1)$,
which, by the first part of this proof, implies that $\alpha = \beta = 0$ and thus $a = b = 0$.
\end{proof}

\begin{rem}\label{r:weak}
Proposition~\ref{p:weak} is closely related to the so-called weak subspace Balian-Low Theorem
(cf.~\cite[Thm.\ 8]{ghhk}) which states that if $g\in L^2(\R)$
and $\Lambda \subset \R^2$ is a lattice such that $(g,\Lambda)$ is a Riesz basis
for its closed linear span $\calG$, then at least one
of the distributions $g', Xg, \wt{g}', X \wt{g}$ is not contained in $\calG$,
where $\wt{g}$ denotes the dual window of $(g,\Lambda)$.
More precisely, Proposition~\ref{p:weak} implies that if $g', X g\in L^2(\R)$
and $\Lambda \subset \R^2$ is a lattice of rational density such that $(g,\Lambda)$
is a Riesz sequence (and hence also $\wt{g}', X \wt{g} \in L^2(\R)$ by Proposition~\ref{p:dual}),
then {\em none} of $g', Xg, \wt{g}', X \wt{g}$ is contained in $\calG$.
In fact, it even asserts that none of the real linear combinations of $i g'$ and $X g$
except $0$ can belong to $\calG$.
Similarly, none of the real linear combinations of $i\wt{g}'$ and $X \wt{g}$ except $0$
can belong to $\calG$.  
\end{rem}

We are now ready to prove Theorem~\ref{t:quanty}.

\begin{proof}[Proof of Theorem \rmref{t:quanty}]
Let us denote by $\bP$ the orthogonal projection from $L^2(\R)$ onto $\calG := \calG(g,\Lambda)$.
Proposition \ref{p:weak} implies that the $\R$-linear mapping
\[
  \R^2 \to L^2 (\R), \;\;
  (a,b) \mapsto(\Id - \bP)(-a g' + 2\pi i b X g) \, ,
\]
with $L^2(\R)$ considered as an $\R$-linear space, is injective.
Since $\R^2$ is finite-dimensional, this implies
$\|(\Id - \bP)(-a g' + 2\pi i b X g)\|_{\LTwoIndex} \ge 2 \gamma\|(a,b)\|_2$
for some $\gamma > 0$ and all $(a,b) \in \R^2$.
On the other hand, Lemma~\ref{l:differentiable} gives a family of functions
$\{ \veps(a,b) \}_{(a,b) \in \R^2} \subset L^2(\R)$ such that
\[
  \pi(a,b) g - g
  = -a g' + 2\pi i b X g + \veps(a,b)
  \quad \text{and} \quad
  \lim_{(a,b)\to (0,0)}
    \frac{\| \veps(a,b) \|_{L^2}}{\|(a,b)\|_2} = 0 \, .
\]
In particular, there exists some $\delta > 0$ such that
$\|\veps(a,b)\|_{\LTwoIndex} \le \gamma\|(a,b)\|_2$ for $\|(a,b)\|_2 < \delta$.
Combining these observations and the fact that $(\Id - \bP) g = 0$,
we see for $\|(a,b)\|_2 < \delta$ that
\begin{align*}
  2\gamma\|(a,b)\|_2
  &\le \| (\Id - \bP)(-a g' + 2\pi i b X g) \|_{\LTwoIndex}
   =   \big\| \big( \Id - \bP \big) \big(\pi(a,b) g - \veps(a,b) \big) \big\|_{\LTwoIndex} \\
  &\le \| (\Id - \bP) \pi(a,b) g \|_{\LTwoIndex}
       + \! \| \veps(a,b) \|_{\LTwoIndex}
    \le \dist(\pi(a,b) g ,\calG) + \gamma \, \|(a,b)\|_2 \, ,
\end{align*}
that is, $\dist(\pi(a,b)g, \calG) \ge \gamma \|(a,b)\|_2$ for $\|(a,b)\|_2 < \delta$.

Now, consider the compact set $R := \{\mu\in\R^2 : \|\mu\|_2 = \dist(\mu,\Lambda)\}$ and
denote by $B = B_\delta(0,0) \subset \R^2$ the open ball of radius $\delta > 0$ centered at $(0,0)$.
By possibly shrinking $\delta$, we may assume that $B\subset R$;
in fact, since $\Lambda$ is discrete, there is some $\delta_0 > 0$ such that
$\|\lambda \|_2 \geq 2 \delta_0$ for all $\lambda \in \Lambda \backslash \{0\}$.
We then have $B \subset R$ as soon as $0 < \delta \leq \delta_0$.

We will show that $\|(\Id - \bP)\pi(a,b)g\|_{\LTwoIndex} \ge \gamma'\|(a,b)\|_2$
for a suitable $\gamma ' > 0$ and all $(a,b)\in R \backslash B$.
Towards a contradiction, suppose that there is no such $\gamma' > 0$.
Then there exists a sequence $(\mu_n)_{n \in \N} \subset R \backslash B$
such that $(\Id - \bP)\pi(\mu_n)g\to 0$ as $n\to\infty$.
As $R\backslash B$ is compact, we may assume that $\mu_n\to\mu_0$
as $n\to\infty$ for some $\mu_0\in R\backslash B$.
But then, since $\mu\mapsto\pi(\mu)g$ is continuous,
it follows that $(\Id - \bP)\pi(\mu_0)g = 0$, that is, $\pi(\mu_0)g\in\calG$,
which by Theorem~\ref{t:clpp} is only possible if $\mu_0\in\Lambda$;
but this implies $\|\mu_0\|_2 = \dist(\mu_0, \Lambda) = 0$,
in contradiction to $\mu_0 \in R \backslash B$.

Hence, $\dist(\pi(a,b)g,\calG) = \|(\Id - \bP)\pi(a,b)g\|_{\LTwoIndex} \ge \gamma'\|(a,b)\|_2$
for some $\gamma ' > 0$ and all $(a,b)\in R \backslash B$.
As a consequence, we have with $\LowerBound := \min\{\gamma,\gamma'\} > 0$,
\[
  \dist(\pi(\mu)g,\calG)
  \geq \LowerBound \cdot \|\mu\|_2
  =    \LowerBound \cdot \dist(\mu,\Lambda)
  \quad \text{for all} \;\; \mu\in R.
\]

Finally, we note that for each $\mu\in\R^2$ there exist $\la\in\Lambda$ and $\nu\in R$
with $\mu = \la+\nu$; indeed, there exists $\lambda \in \Lambda$
with $\| \mu - \lambda \|_2 = \dist(\mu, \Lambda)$,
and then $\nu := \mu - \lambda$ satisfies $\|\nu\|_2 = \dist(\mu, \Lambda) = \dist(\nu, \Lambda)$.
Thus, we obtain (see Lemma~\ref{l:commute})
\begin{align*}
  \dist(\pi(\mu)g, \calG)
  &=       \dist(\pi(\nu)g, \calG)
   \,\ge\, \LowerBound \cdot \dist(\nu,\Lambda)
   =       \LowerBound \cdot \dist(\mu,\Lambda) .
\end{align*}
In view of Proposition~\ref{p:upper}, this completes the proof.
\end{proof}

\section{An Explicit Local Bound}
\label{s:ortho}

As mentioned in the introduction, we were unable to derive an \emph{explicit} constant $\LowerBound$
for \eqref{e:main}.
Nevertheless, we can find a constant $\LowerBound$ that is valid for $(u,\eta)$
close to the lattice $\Lambda$.
For this, however, we have to assume that $(g,\Lambda)$ is an orthonormal sequence.
The following result makes a first step towards finding such a constant $\LowerBound$;
it improves Proposition~\ref{p:weak} under the additional assumption of orthonormality.

\begin{prop}\label{p:ortho_derivative}
Let $g\in\HH^1(\R)$ and let $\Lambda\subset\R^2$ be a lattice such that $(g,\Lambda)$ is
an orthonormal basis of its closed linear span $\calG(g,\Lambda)$.
Then for any $(a,b)\in\R^2$,
\[
  \dist\left(-a g' + 2\pi i b X g,\, \calG(g,\Lambda)\right)
  \,\ge\, \frac{\pi}
               {\sqrt{\|g'\|_{\LTwoIndex}^2 + \|2\pi i X g\|_{\LTwoIndex}^2}}
          \, \left\|(a,b)\right\|_2.
\]
\end{prop}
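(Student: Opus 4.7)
The plan is to realize $\|(I-\bP) L\|_{L^2}^2$, where $\bP$ is the orthogonal projection of $L^2(\R)$ onto $\calG(g,\Lambda)$ and $L = L_{a,b} := -ag' + 2\pi ibXg$, as a positive-semidefinite quadratic form in $(a,b) \in \R^2$ and to bound its smallest eigenvalue from below by $\pi^2/C^2$, where $C^2 := \|g'\|_{L^2}^2 + 4\pi^2\|Xg\|_{L^2}^2$. Writing $U := (I-\bP)g'$ and $W := (I-\bP)Xg$, direct expansion yields
\[
  \|(I-\bP)L_{a,b}\|_{L^2}^2 = a^2\|U\|_{L^2}^2 + 4\pi^2 b^2\|W\|_{L^2}^2 - 4\pi ab \, \Im\langle U, W\rangle,
\]
the quadratic form of a $2\times 2$ symmetric matrix $M$ with $\operatorname{tr}(M) = \|U\|^2 + 4\pi^2\|W\|^2 \leq C^2$ and $\det(M) = 4\pi^2\bigl[\|U\|^2\|W\|^2 - (\Im\langle U, W\rangle)^2\bigr]$. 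Since $\lambda_{\min}(M) \geq \det(M)/\operatorname{tr}(M)$, the claim reduces to proving
\[
  \|U\|^2\|W\|^2 - (\Im\langle U, W\rangle)^2 \geq \tfrac{1}{4}.
\]
By Cauchy--Schwarz, $\|U\|^2\|W\|^2 \geq (\Re\langle U, W\rangle)^2 + (\Im\langle U, W\rangle)^2$, so it suffices to show $|\Re\langle U, W\rangle| \geq \tfrac{1}{2}$.

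I would split $\Re\langle U, W\rangle = \Re\langle g', Xg\rangle - \Re\langle \bP g', \bP Xg\rangle$ and handle the two terms separately. For the first, integration by parts applied to $|g|^2$---justified by $g \in C_0(\R)$ (from $H^1 \hookrightarrow C_0$) and $x|g|^2 \in L^1$ (from $g, Xg \in L^2$), which together yield sequences $R_n\to\infty$ with $R_n|g(\pm R_n)|^2\to 0$---gives $2\Re\int xg'\bar g\,dx = -\|g\|^2$, hence $\Re\langle g', Xg\rangle = -\tfrac{1}{2}$.

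The main step is to prove $\Re\langle \bP g', \bP Xg\rangle = 0$. Expanding in the orthonormal basis $(\pi(\lambda)g)_{\lambda\in\Lambda}$ of $\calG(g,\Lambda)$ gives
\[
  \langle \bP g', \bP Xg\rangle = \sum_{\lambda\in\Lambda} a_\lambda\overline{b_\lambda}, \qquad a_\lambda := \langle g', \pi(\lambda)g\rangle, \qquad b_\lambda := \langle Xg, \pi(\lambda)g\rangle,
\]
an absolutely convergent series by Cauchy--Schwarz. I would exploit the symmetry $\lambda \leftrightarrow -\lambda$, which partitions $\Lambda\setminus\{0\}$ into unordered pairs since the lattice $\Lambda\subset\R^2$ is torsion-free. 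Integration by parts, the orthonormality relation $\langle g,\pi(\lambda)g\rangle = 0$ for $\lambda\neq 0$, and the substitution $y = x+\lambda_1$ in $\langle g, \pi(-\lambda)g'\rangle$ and $\langle Xg, \pi(-\lambda)g\rangle$ yield
\[
  a_{-\lambda} = -e^{-2\pi i\lambda_1\lambda_2}\overline{a_\lambda}, \qquad b_{-\lambda} = e^{-2\pi i\lambda_1\lambda_2}\overline{b_\lambda} \qquad (\lambda\neq 0),
\]
so the phases cancel in $a_{-\lambda}\overline{b_{-\lambda}} = -\overline{a_\lambda}b_\lambda = -\overline{a_\lambda\overline{b_\lambda}}$, and each paired contribution $a_\lambda\overline{b_\lambda} + a_{-\lambda}\overline{b_{-\lambda}} = 2i\Im(a_\lambda\overline{b_\lambda})$ is purely imaginary. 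The $\lambda = 0$ term is also purely imaginary: $a_0 = \langle g', g\rangle \in i\R$ by integration by parts and $b_0 = \langle Xg, g\rangle\in\R$ by self-adjointness of $X$. Summing, $\Re\langle \bP g', \bP Xg\rangle = 0$, whence $\Re\langle U, W\rangle = -\tfrac{1}{2}$, completing the proof.

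The main obstacle is the identity $\Re\langle \bP g', \bP Xg\rangle = 0$: a naive Heisenberg-type inequality for $U$ and $W$ is unavailable because $(I-\bP)$ commutes neither with differentiation nor with multiplication by $x$, and one easily constructs abstract matrices showing that Heisenberg for $g$ alone does not imply the subspace analogue. The essential new ingredient is the structural symmetry $\Lambda = -\Lambda$ combined with careful bookkeeping of the phase factors arising from the non-commutativity of translation and modulation in the Gabor system.
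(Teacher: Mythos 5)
Your proof is correct and follows essentially the same route as the paper's: the key vanishing identity you prove, $\Re\langle \bP g', \bP Xg\rangle = 0$, is equivalent (via $\langle\bP g', iXg\rangle = -i\langle\bP g', \bP Xg\rangle$) to the paper's $\Im\langle\bP g', iXg\rangle = 0$, both established from the $\lambda\leftrightarrow-\lambda$ pairing combined with integration by parts and the orthonormality relation; you also rederive the same $2\times 2$ eigenvalue bound $\lambda_{\min}(M)\geq\det(M)/\operatorname{tr}(M)$ that the paper packages as Lemma~\ref{lem:DistanceLargerThanImaginaryPartOfInnerProduct}, and you use the same partial-integration computation $\Re\langle g',Xg\rangle=-\tfrac12$ (the paper justifies the boundary-term vanishing with a smooth cutoff $\varphi_n$ rather than decay along a subsequence $R_n\to\infty$, but these are interchangeable).
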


\begin{rem}
The classical uncertainty principle (see e.g., \cite[Theorem 2.2.1]{g}),
combined with elementary computations, implies because of $\| g \|_{L^2} = 1$ that
the lower bound appearing in Proposition~\ref{p:ortho_derivative} is bounded by
\[
  \frac{\pi}{\sqrt{\|g'\|_{\LTwoIndex}^2 + \|2\pi i X g\|_{\LTwoIndex}^2}}
  \leq \sqrt{\pi / 2} .
\]
\end{rem}

The proof of Proposition~\ref{p:ortho_derivative} hinges crucially on the following
lemma which describes a general property of Hilbert spaces.

\begin{lem}\label{lem:DistanceLargerThanImaginaryPartOfInnerProduct}
  Let $\calH$ be a Hilbert space, and let $f,g \in \calH$ with $f \neq 0$ or $g \neq 0$.
  Then
  \begin{align*}
    \| a f + b g\|^2
    \geq \frac{\|f\|^2 \cdot \|g\|^2 - (\Re \langle f,g \rangle)^2}
              {\|f\|^2 + \|g\|^2}
         \cdot \| (a,b) \|_2^2
    \geq \frac{(\Im \langle f,g \rangle)^2}
              {\|f\|^2 + \|g\|^2}
         \cdot \| (a,b) \|_2^2
  \end{align*}
  for all $a,b \in \R$.
\end{lem}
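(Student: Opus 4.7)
My plan is to reduce everything to a two-variable real-quadratic identity. Write $p := \|f\|^2$, $q := \|g\|^2$, and $r := \Re \langle f, g \rangle$. The hypothesis $f \neq 0$ or $g \neq 0$ guarantees $p + q > 0$, so the claimed bounds make sense. Expanding the squared norm in the usual way gives
\[
  \| a f + b g \|^2
  = a^2 \, p + 2 a b \, r + b^2 \, q
  \qquad \text{for all } a, b \in \R.
\]
The first inequality is then equivalent to
\[
  (p + q)(a^2 p + 2 a b r + b^2 q) \,\geq\, (pq - r^2)(a^2 + b^2).
\]

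The main (and only) step of real content is to verify the algebraic identity
\[
  (p + q)(a^2 p + 2 a b r + b^2 q) - (pq - r^2)(a^2 + b^2)
  = (a p + b r)^2 + (a r + b q)^2,
\]
which follows by expanding both sides and matching coefficients in $a^2$, $ab$, and $b^2$. Since the right-hand side is nonnegative, the first inequality follows immediately after dividing by $p + q > 0$.

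For the second inequality I will appeal only to Cauchy--Schwarz and the decomposition of a complex inner product into real and imaginary parts:
\[
  (\Re \langle f, g \rangle)^2 + (\Im \langle f, g \rangle)^2
  = |\langle f, g \rangle|^2
  \leq \|f\|^2 \, \|g\|^2,
\]
which rearranges to $\|f\|^2 \|g\|^2 - (\Re \langle f, g \rangle)^2 \geq (\Im \langle f, g \rangle)^2$. Combining this with the first inequality gives the second. I do not expect any genuine obstacle here; the only thing to be careful about is keeping track of the real-linearity in $(a,b) \in \R^2$ (as opposed to complex-linearity), which is why working with $\Re \langle f, g \rangle$ rather than $\langle f, g \rangle$ throughout is the right normalization.
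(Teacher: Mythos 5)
Your proof is correct. The key identity
\[
  (p + q)(a^2 p + 2 a b r + b^2 q) - (pq - r^2)(a^2 + b^2)
  = (a p + b r)^2 + (a r + b q)^2
\]
expands correctly on both sides to $a^2 p^2 + b^2 q^2 + (a^2+b^2)r^2 + 2ab\,r(p+q)$, and the Cauchy--Schwarz step giving $pq - r^2 \geq (\Im\langle f,g\rangle)^2$ is exactly right.

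Your route differs from the paper's. The paper writes $\|af+bg\|^2 = \langle (a,b), M(a,b)\rangle_{\R^2}$ with the real Gram matrix $M = \bigl(\begin{smallmatrix} p & r \\ r & q \end{smallmatrix}\bigr)$, computes the characteristic polynomial, and bounds the smallest eigenvalue from below by $\lambda_{\min} = \frac{2B}{A+\sqrt{A^2-4B}} \geq \frac{B}{A}$ where $A = p+q$ and $B = pq-r^2$. You instead exhibit the quantity $(p+q)\|af+bg\|^2 - (pq-r^2)(a^2+b^2)$ as a sum of two explicit squares, making nonnegativity immediate without any spectral input. The paper's approach makes it transparent \emph{why} the constant $(pq-r^2)/(p+q)$ appears (it is a lower bound on the smallest eigenvalue of the Gram form), which is useful for intuition and potential generalization to more than two vectors; your approach is more self-contained, trading that conceptual picture for a one-line algebraic verification. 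Both are fully elementary and equally rigorous; neither uses the hypothesis more or less than the other, and both yield the identical constant.
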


\begin{proof}
  Let $\alpha := \|f\|_\calH^2$, $\gamma := \|g\|_\calH^2$,
  and $\beta := \Re \langle f,g \rangle$.
  Moreover, set ${A := \alpha + \gamma}$ and $B := \alpha \gamma - \beta^2$.
  Because of $f \neq 0$ or $g \neq 0$, we have $A > 0$.
  Besides, the Cauchy-Schwarz inequality shows $\beta \leq |\beta| \leq \sqrt{\alpha \gamma}$,
  and thus $B \geq 0$.
  Finally, a direct computation shows
  \(
    A^2 - 4 B
    = (\alpha + \gamma)^2 - 4(\alpha \gamma - \beta^2)
    = (\alpha - \gamma)^2 + 4 \beta ^2
    \geq 0
    .
  \)

  Given these notations, another direct computation shows for $a,b \in \R$ that
  \begin{equation}
    \| a f + b g\|_\calH^2
    = \langle a f + b g, a f + b g \rangle_\calH
    = \big\langle \!
        \left( \begin{smallmatrix} a \\ b \end{smallmatrix} \right) , \,
        M \left( \begin{smallmatrix} a \\ b \end{smallmatrix} \right)
      \! \big\rangle_{\R^2}
    \!\quad \text{where} \quad\!
    M \! := \! \left(
                 \begin{smallmatrix}
                   \alpha & \beta \\
                   \beta  & \gamma
                 \end{smallmatrix}
               \right) \! .
    \label{eq:HilbertSpaceLemmaMainIdentity}
  \end{equation}
  Note that the matrix $M$ is real-symmetric, with characteristic polynomial
  \[
    \chi_M (\lambda)
    = \det \left(
             \begin{matrix}
               \lambda - \alpha & - \beta \\
               - \beta          & \lambda - \gamma
               \end{matrix}
           \right)
    = \lambda^2 - A \, \lambda + B \, ,
  \]
  which has the roots
  \[
    \lambda_{1/2}
    = \frac{A}{2} \pm \sqrt{\frac{A^2}{4} - B}
    = \frac{A \pm \sqrt{A^2 - 4 B}}{2} \, .
  \]
  Therefore, and because of $\sqrt{A^2 - 4 B} \leq \sqrt{A^2} = A$,
  the smallest eigenvalue of $M$ satisfies
  \[
    \lambda_{\min}
    = \frac{A - \sqrt{A^2 - 4 B}}{2}
    = \frac{1}{2} \frac{A^2 - (A^2 - 4B)}{A + \sqrt{A^2 - 4 B}}
    = \frac{2 B}{A + \sqrt{A^2 - 4 B}}
    \geq \frac{B}{A}
    \geq 0 \, .
  \]
  Since $M$ is real symmetric, this implies
  $\langle x, M x \rangle_{\R^2} \geq \frac{B}{A} \|x\|_2^2$ for all $x \in \R^2$.

  Now, Equation~\eqref{eq:HilbertSpaceLemmaMainIdentity} shows that
  \(
    \|a f + b g\|_{\calH}^2
    = \langle
        \left( \begin{smallmatrix} a \\ b \end{smallmatrix} \right),
        M \left( \begin{smallmatrix} a \\ b\end{smallmatrix} \right)
      \rangle_{\R^2}
    \geq \frac{B}{A} \cdot \|(a,b)\|_2^2
  \)
  for all $a,b \in \R$,
  which establishes the first part of the claim.
  For the second part, note that the Cauchy-Schwarz inequality implies
\begin{align*}
B &= \alpha \gamma - \beta^2
      = \|f\|_\calH^2 \, \|g\|_\calH^2 - (\Re \langle f,g \rangle_\calH)^2 \\
      &\geq |\langle f,g \rangle_\calH|^2 - (\Re \langle f,g \rangle_\calH)^2
      =    (\Im \langle f,g \rangle_\calH)^2 \, .
      \qedhere
\end{align*}
\end{proof}

\begin{proof}[Proof of Proposition~\rmref{p:ortho_derivative}]
Denote by $\bP$ the orthogonal projection from $L^2(\R)$ onto $\calG(g,\Lambda)$ in $L^2(\R)$.
Since $(g,\Lambda)$ is an orthonormal sequence,
\[
  \bP f
  = \sum_{\la\in\Lambda}
      \<f,\pi(\la)g\>\pi(\la)g \, ,
  \quad \text{whence} \quad
  \<\bP g', i X g\>
  = \sum_{\la\in\Lambda}
      \<g',\pi(\la)g\>\<\pi(\la)g,i X g\> \, .
\]
Let $a,b\in\R$.
By integration by parts and translation, and by using the elementary identity
$(\pi(a,b))^\ast = e^{-2 \pi i a b} \pi(-a, -b)$, we see that
\begin{equation}
  \begin{split}
    \<g',\pi(a,b)g\>
    & = - \Big\langle g, \,\, \frac{d}{dx} \big( e^{2 \pi i b x} \, g(x - a) \big) \Big\rangle \\
      &= - \big\langle g, 2\pi i b \cdot \pi(a,b) g \big\rangle
        - \big\langle g, \pi(a,b) g' \big\rangle \\
    & = 2 \pi i b \cdot \big\langle g, \,\, \pi(a,b) g \big\rangle
        - e^{-2 \pi i a b} \cdot \big\langle \pi(-a,-b) g, g' \big\rangle \, ,
  \end{split}
  \label{eq:OrthonormalCaseIdentityOne}
\end{equation}
as well as
\begin{align}
  \<\pi(a,b) g, i X g\>
  & = \big\langle g, e^{-2 \pi i a b} \, \pi(-a, -b) [i X g] \big\rangle
      \nonumber \\
  & = e^{2 \pi i a b} \cdot
      \big\langle
        g, \,\,
        M_{-b} \big[\, i \,\, ( (\cdot) + a ) \,\, g(\cdot + a) \,\big]
      \big\rangle
      \nonumber \\
  & = e^{2 \pi i a b} \,\,
      \big(
        -i a \, \big\langle g, \,\, M_{-b} [g(\cdot + a)] \big\rangle
        + \big\langle -i X g, \,\, M_{-b} [g(\cdot + a)] \big\rangle
      \big)
      \nonumber \\
  & = e^{2 \pi i a b} \,\,
      \big(
        - i a \, \big\langle g, \,\, \pi(-a,-b) g \big\rangle
        - \big\langle i X g, \,\, \pi(-a,-b) g \big\rangle
      \big) \, .
  \label{eq:OrthonormalCaseIdentityTwo}
\end{align}
From Equations~\eqref{eq:OrthonormalCaseIdentityOne} and \eqref{eq:OrthonormalCaseIdentityTwo},
we see by orthonormality of $(g, \Lambda)$ for arbitrary $(a,b) \in \Lambda$ that
\begin{align*}
  \langle g', \pi(a,b) g \rangle
  & = \begin{cases}
        - \langle g, g' \rangle \, ,                                & \text{if } (a,b) = 0 \, , \\
        - e^{-2 \pi i a b} \, \langle \pi(-a,-b) g, g' \rangle \, , & \text{otherwise}
      \end{cases} \\
  & = - e^{-2 \pi i a b} \, \langle \pi (-a, -b) g, g' \rangle
\end{align*}
and
\begin{align*}
  \langle \pi(a,b) g, i X g \rangle
  & = \begin{cases}
        - \langle i X g, g \rangle \, ,                                & \text{if } (a,b) = 0 \,, \\
        - e^{2 \pi i a b} \, \langle i X g, \pi(-a, -b) g \rangle \, , & \text{otherwise}
      \end{cases} \\
  & = - e^{2 \pi i a b} \, \langle i X g, \pi(-a, -b) g \rangle \, .
\end{align*}
Combining these identities, we arrive at
\[
  \<g', \pi(a,b)g\> \<\pi(a,b)g, i X g\>
  = \<\pi(-a,-b)g, g'\> \<i X g, \pi(-a,-b)g\> \, ,
\]
for all $(a,b) \in \Lambda$.
Therefore, with $\mu = - \lambda$, we see that
\begin{align*}
  \<\bP g',i X g\>
  &= \sum_{\la \in \Lambda}
      \< g', \pi(\la) g \>
      \< \pi(\la) g, i X g \> \\
  &= \sum_{\mu \in \Lambda}
      \< \pi(\mu)g, g' \>
      \< i X g, \pi(\mu)g \>
  = \< i X g, \bP g' \> \, ,
\end{align*}
which shows that $\Im\<\bP g',i X g\> = 0$.

\medskip{}

We now intend to use partial integration to get
$\langle g', Xg \rangle = - \|g\|_{\LTwoIndex}^2 - \langle X g, g' \rangle$;
however, since $X g \notin H^1(\R)$, we cannot \emph{directly} apply such a partial integration.
Instead, pick $\varphi \in C_c^\infty (\R)$ with $0 \leq \varphi \leq 1$,
$\supp \varphi \subset (-2,2)$, and $\varphi \equiv 1$ on $(-1,1)$,
and set $\varphi_n : \R \to [0,1], x \mapsto \varphi(x/n)$.
We then have $\varphi_n \to 1$ pointwise, so that the dominated convergence theorem implies
$\langle f, \varphi_n \cdot h \rangle \to \langle f, h \rangle$ for all $f,h \in L^2(\R)$.
Likewise, we have $\varphi_n ' (x) = n^{-1} \cdot \varphi' (x/n)$ and hence $\varphi_n' \to 0$
uniformly, which implies $\langle f, \varphi_n ' \cdot h \rangle \to 0$ for $f,h \in L^2(\R)$.
Overall, since $\varphi_n X g \in H^1(\R)$, we thus see
\begin{align*}
  \langle g', X g \rangle
  & = \lim_{n \to \infty} \langle g', \varphi_n X g \rangle
    = \lim_{n \to \infty} - \langle g, (\varphi_n X g)' \rangle \\
  & = - \lim_{n \to \infty}
        \big[
          \langle g, \varphi_n ' \cdot X g \rangle
          + \langle g, \varphi_n g \rangle
          + \langle g, \varphi_n X g' \rangle
        \big]
    = - \|g\|_{\LTwoIndex}^2 - \langle X g, g' \rangle \, .
\end{align*}
Here, we used in the last step that
$\langle g, \varphi_n X g' \rangle = \langle X g, \varphi_n g' \rangle$ with $X g, g' \in L^2(\R)$.

In view of the last displayed equation, we get
$2 \Re \< g', X g \> = - \|g\|_{\LTwoIndex}^2 = -1$,
and hence $\Im\<g',2\pi i X g\> = -2 \pi \Re\<g', X g\> = \pi$.
Therefore,
\[
  \Im\<(I-\bP)g', 2\pi i X g\>
  = \Im\<g',2\pi i X g\> - 2 \pi \Im \< \bP g', i X g \>
  = \pi.
\]
Setting $f := (I - \bP) [- g']$ and $h := (I - \bP) [2 \pi i X g]$,
we have shown up to now that $\Im \langle f, h \rangle = -\pi \neq 0$,
which in particular implies that $f \neq 0$ and $h \neq 0$.
Thus, an application of Lemma~\ref{lem:DistanceLargerThanImaginaryPartOfInnerProduct} shows
for arbitrary $(a,b) \in \R^2$ that
\begin{align*}
  & \lefteqn{\dist^2 \big( - a g' + 2 \pi i b X g,\, \calG(g,\Lambda) \big) }\\
  & = \| (I - \bP) (- a g'+ 2 \pi i b X g)\|_{\LTwoIndex}^2
    = \| a \cdot f + b \cdot h \|_{\LTwoIndex}^2 \\
  & \geq \frac{(\Im \langle f,h \rangle)^2}
              {\|f\|_{\LTwoIndex}^2 + \|h\|_{\LTwoIndex}^2}
         \cdot \| (a,b) \|_2^2
    \geq \frac{\pi^2 \cdot \| (a,b) \|_2^2}
              {\|g'\|_{\LTwoIndex}^2 + \|2 \pi i X g\|_{\LTwoIndex}^2} \,.
\end{align*}
This concludes the proof of the proposition.
\end{proof}

\begin{thm}\label{t:ortho}
Let $g\in\HH^1(\R)$ and let $\Lambda\subset\R^2$ be a lattice
such that $(g,\Lambda)$ is an orthonormal basis of its closed linear span $\calG(g,\Lambda)$.
Then there exists $\veps > 0$ such that
\[
  \dist \big( \pi(\mu)g,\, \calG(g,\Lambda) \big)
  \,\geq\, \frac{\pi/2}{\sqrt{\|g'\|_{\LTwoIndex}^2 + \|2\pi i X g\|_{\LTwoIndex}^2}}\,
           \dist(\mu,\Lambda)
  \quad \forall \, \mu\in\Lambda \!+\! B_\veps(0) .
\]
If $g\in\HH^2(\R)$, then $\veps$ can be chosen as
$\veps := \pi \Big/ \Bigl(2 C_g \sqrt{\|g'\|_{\LTwoIndex}^2 + \|2\pi i X g\|_{\LTwoIndex}^2} \,\Bigr)$
with $C_g$ as in Equation~\eqref{e:Cg}.
\end{thm}

\begin{proof}
For $(a,b)\in\R^2$ let $\gamma(a,b) := \pi(a,b)g - g - (-a g'+2\pi i b X g)$.
Denote by $\bP$ the orthogonal projection from $L^2(\R)$ onto $\calG(g, \Lambda)$.
Due to Proposition \ref{p:ortho_derivative} we have
\begin{align*}
  \frac{\pi}{\sqrt{\|g'\|_{\LTwoIndex}^2 + \|2\pi i X g\|_{\LTwoIndex}^2}}
  \|(a,b)\|_2
  &\le\|(I-\bP)(-a g'+2\pi i b X g)\|_{\LTwoIndex} \\[-0.5cm]
  &= \big\|(I-\bP) \big( \pi(a,b)g - g - \gamma(a,b) \big) \big\|_{\LTwoIndex}\\
  &\le\|(I-\bP)\pi(a,b)g\|_{\LTwoIndex} + \|\gamma(a,b)\|_{\LTwoIndex} \, .
\end{align*}
In the last inequality we used that $(I - \bP) g = 0$ and $\|I - \bP\| = 1$.
By Lemma~\ref{l:differentiable} there exists $\veps > 0$ such that
\[
  \|\gamma(a,b)\|_{\LTwoIndex}
  \le\frac{\pi/2}{\sqrt{\|g'\|_{\LTwoIndex}^2 + \|2\pi i X g\|_{\LTwoIndex}^2}} \, \|(a,b)\|_2
  \qquad\text{for $\|(a,b)\|_2 < \veps$} \, .
\]
Moreover, this is satisfied in the case $g\in\HH^2(\R)$ if $\veps$ is
as given in the theorem (see Lemma \ref{l:differentiable}).
Hence, if $(\alpha,\beta)\in\Lambda + B_\veps(0)$, say $(\alpha,\beta) = \la + (a,b)$
with $\lambda \in \Lambda$ and $(a,b) \in B_\eps (0)$,
then (see Lemma \ref{l:commute})
\begin{align*}
  \|(I \!-\! \bP)\pi(\alpha,\beta)g\|_{\LTwoIndex}
  = \|(I \!-\! \bP)\pi(a,b)g\|_{\LTwoIndex}
  &\geq \frac{\pi/2}{\sqrt{\|g'\|_{\LTwoIndex}^2 + \|2\pi i X g\|_{\LTwoIndex}^2}}
        \cdot \|(a,b)\|_2 \, .
\end{align*}
This proves the theorem.
\end{proof}

\begin{rem*}
In the case $g\in\HH^1(\R)$, the value of $\veps$ in Theorem~\ref{t:ortho}
depends on the convergence to zero of the following quantities
(see the proof of Lemma~\ref{l:differentiable}):
\[
  \int_{|x|> b}x^2 |g(x)|^2\,dx,
  \quad
  \int_{|x|>b}|g'(x)|^2\,dx
  \quad\text{and}\quad
  \int_{|\omega|>b}\omega^2|\wh g(\omega)|^2\,d\omega
  \quad\text{ as $b\to\infty$}.
\]
\end{rem*}

Note that the lattice $\Lambda$ in Theorem~\ref{t:ortho} is not necessarily of rational density.
The following corollary suggests that the rational density condition of $\Lambda$
in Theorems~\ref{t:clpp} and \ref{t:quanty} might be redundant.

\begin{cor}\label{c:new}
Let $g\in\HH^1(\R)$ and let $\Lambda\subset\R^2$ be a lattice such that $(g,\Lambda)$
is an orthonormal basis of its closed linear span $\calG(g,\Lambda)$.
Then there exists an $N\in\N$ such that $\pi(\mu)g\notin\calG(g,\Lambda)$
for all $\mu \in \R^2 \backslash \frac{1}{N} \Lambda$;
that is, $\calG(g,\Lambda)$ is invariant only under time-frequency shifts
with parameters in a subset of $\frac{1}{N} \Lambda$.
\end{cor}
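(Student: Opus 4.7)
\emph{Proof plan.} Define $S := \{\mu \in \R^2 : \pi(\mu) g \in \calG(g,\Lambda)\}$, so the corollary asserts that $S \subseteq \bigcup_{n=1}^{N} \tfrac{1}{n}\Lambda$ for some $N \in \N$. The plan is to endow $S$ with enough structure that its image $\overline S$ in the compact torus $Q := \R^2/\Lambda$ is a closed \emph{subgroup}, and then use the explicit local lower bound of Theorem~\ref{t:ortho} to force $\overline S$ to be finite, giving a uniform bound on the order of every element of $\overline S$.

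First I would establish three structural properties of $S$. Closedness is immediate from the continuity of $\mu\mapsto\pi(\mu)g$ and the closedness of $\calG(g,\Lambda)$, and $\Lambda$-periodicity follows directly from Lemma~\ref{l:commute}. The crucial additivity rests on the observation (also a consequence of Lemma~\ref{l:commute}) that $\pi(\lambda)\calG(g,\Lambda)=\calG(g,\Lambda)$ for every $\lambda\in\Lambda$: if $\mu\in S$, then the Weyl-type commutation $\pi(\mu)\pi(\lambda)=\zeta\,\pi(\lambda)\pi(\mu)$ with $|\zeta|=1$ gives $\pi(\mu)[\pi(\lambda)g]=\zeta\,\pi(\lambda)[\pi(\mu)g]\in\pi(\lambda)\calG(g,\Lambda)=\calG(g,\Lambda)$ for every $\lambda\in\Lambda$, and a continuity/density argument yields $\pi(\mu)\calG(g,\Lambda)\subseteq\calG(g,\Lambda)$; hence $\mu,\nu\in S$ implies $\pi(\mu+\nu)g=\zeta'\,\pi(\mu)\pi(\nu)g\in\pi(\mu)\calG(g,\Lambda)\subseteq\calG(g,\Lambda)$, so $\mu+\nu\in S$. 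Projecting to $Q$, this shows that $\overline S$ is a closed subsemigroup of the compact abelian group $Q$ containing the identity. A standard compactness trick then promotes it to a closed subgroup: for any $\bar\mu\in\overline S$, a convergent subsequence $n_k\bar\mu\to\bar h$ (with $n_{k+1}-n_k$ arbitrarily large after sparsification) gives $m_k\bar\mu\to 0$ with $m_k:=n_{k+1}-n_k\in\N$, and then $(m_k-1)\bar\mu\to-\bar\mu$ forces $-\bar\mu\in\overline S$ by closedness.

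It then remains to invoke Theorem~\ref{t:ortho}, which supplies some $\veps>0$ such that $\dist(\pi(\mu)g,\calG(g,\Lambda))\geq c\cdot\dist(\mu,\Lambda)$ for all $\mu\in\Lambda+B_\veps(0)$ and some $c>0$. Since $\dist(\pi(\mu)g,\calG(g,\Lambda))=0$ for $\mu\in S$, this forces $S\cap(\Lambda+B_\veps(0))=\Lambda$, so the identity is an isolated point of $\overline S$ in $Q$. Translation in the subgroup $\overline S$ makes every point isolated, so $\overline S$ is discrete, and a discrete closed subgroup of the compact group $Q$ is finite. Setting $N:=|\overline S|$, every element of $\overline S$ has order dividing $N$, hence $N\mu\in\Lambda$ for every $\mu\in S$, giving $S\subseteq\tfrac{1}{N}\Lambda\subseteq\bigcup_{n=1}^{N}\tfrac{1}{n}\Lambda$. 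The main technical hurdle will be the semigroup-to-subgroup promotion: $S$ itself is not obviously closed under negation in $\R^2$, and it is precisely the compactness of $Q$, combined with the orthonormality-driven isolation of $0$ provided by Theorem~\ref{t:ortho}, that rescues the argument without requiring a direct proof that $-\mu\in S$ whenever $\mu\in S$.
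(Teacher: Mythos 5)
Your proof is correct, and it takes a genuinely different route from the paper's. The paper proceeds by an explicit pigeonhole argument: it covers the fundamental domain of $\Lambda$ by $N-1$ sets of diameter less than the $\veps$ supplied by Theorem~\ref{t:ortho}, considers the $N$ points $z_k = k\mu \bmod \Lambda$ ($k=0,\dots,N-1$), and shows that two of these must coincide (otherwise two distinct ones would be $\veps$-close, producing a point of $B_\veps(0)\setminus\Lambda$ with $\pi(\cdot)g\in\calG(g,\Lambda)$, contradicting Theorem~\ref{t:ortho}); this gives $(k-\ell)\mu\in\Lambda$ for some $1\le k-\ell\le N-1$. You instead work in the quotient torus $Q=\R^2/\Lambda$: you re-derive the additivity and $\Lambda$-periodicity of $S$ (which the paper packages as Lemma~\ref{lem:GaborSpaceInvariance}), observe that its image $\overline S$ in $Q$ is a closed subsemigroup containing $0$, upgrade this to a closed subgroup via the standard compact-group trick you describe, and then use the $\veps$-isolation of $0$ from Theorem~\ref{t:ortho} to conclude $\overline S$ is discrete, hence finite. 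Your route is more structural and yields the slightly cleaner conclusion $S\subset\tfrac1N\Lambda$ (with $N=|\overline S|$) rather than a union; the paper's route is more elementary and gives a directly computable $N$ as a covering number, though a packing argument (the translates $B_{\veps/2}(\bar\nu)$, $\bar\nu\in\overline S$, are disjoint in $Q$) would make your $N$ equally explicit. Both proofs rest on the same two inputs: additivity of the invariance set and the local lower bound near $\Lambda$ from Theorem~\ref{t:ortho}.
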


\begin{proof}
This follows by combining Theorem~\ref{t:ortho}
with \cite[Lemma~3.1]{CarageaBalianLowForSubspaces}.
\end{proof}

\appendix

\renewcommand{\thetheorem}{A.\arabic{theorem}}
\setcounter{theorem}{0}

\section{Auxiliary results}
\label{sec:Auxiliary}

\subsection{Matrix multiplication operators}
\label{ss:multi}
Let $(\Omega,\Sigma,\mu)$ be a measure space.
To avoid trivialities, we assume that there exists $S \in \Sigma$ with $0 < \mu(S) < \infty$.
Now, let $B : \Omega \to \C^{n{\times} m}$ be a measurable matrix-valued function.
Then the multiplication operator
\[
  M_B : \quad \dom(M_B) \subset L^2(\Omega, \C^m) \to L^2(\Omega, \C^n)
\]
is defined by
\[
  (M_Bf)(\omega) := B(\omega)f(\o),
  \qquad\omega\in\Omega, \; f\in\dom(M_B),
\]
where
\[
  \dom(M_B)
  := \left\{
       f\in L^2(\Omega; \C^m)
       \colon
       \int \|B(\o)f(\o)\|_{\C^n}^2\,d\mu(\omega) < \infty
     \right\}.
\]
It is easy to see that the operator $M_B$ is bounded
if and only if each entry of $B(\cdot)$ is essentially bounded as a function on $\Omega$,
if and only if $\dom(M_B) = L^2(\Omega; \C^m)$.

Let $A : \calH \to \calH$ be a bounded self-adjoint operator in a Hilbert space $\calH$.
Then for any continuous, real-valued function $\vphi\in C(\sigma(A); \R)$,
the operator $\vphi(A)$ is defined by $\varphi(A) := \lim_{n\to\infty} p_n(A)$,
where $(p_n)_{n \in \N}$ is a sequence of real-valued polynomials
converging uniformly to $\vphi$ on $\sigma(A) \subset \R$
and the limit is taken with respect to the operator norm.
Since $\|p(A)\| = \|p\|_{C(\sigma(A))}$ for polynomials $p$, this definition is meaningful.
One then has ${\|\vphi(A)\| = \|\vphi\|_{C(\sigma(A))}}$
and ${\sigma(\vphi(A)) = \{\vphi(\la) : \la\in\sigma(A)\}}$.
Furthermore, $\varphi(A)$ is self-adjoint for all ${\varphi \in C(\sigma(A); \R)}$,
since this is easily seen to hold for all polynomials $p_n$.
For more details on this {\em continuous functional calculus},
see \mbox{\cite[Section~VII.1]{rsi}}.

For the case $n = m$, the next lemma connects the spectral properties of the multiplication
operator $M_B$ to those of the matrices $B(\omega)$, $\omega \in \Omega$.

\begin{lemma-A}\label{l:multi}
Let $B : \Omega \to \C^{n{\times} n}$ be a measurable, essentially bounded matrix-valued function
satisfying $B(\o) = B(\o)^*$ for a.e.\ $\o\in\Omega$.
Then the following statements hold:
\begin{enumerate}
  \item[{\rm (i)}]   The operator $M_B$ is bounded and self-adjoint.

  \item[{\rm (ii)}]  For a.e.\ $\o\in\Omega$ we have
                     \[
                       \sigma(B(\o))\subset\sigma(M_B).
                     \]

  \item[{\rm (iii)}] For every set $N\subset\Omega$ of zero measure,
                     \[
                       \sigma(M_B)
                       \subset \ol{
                                   \bigcup_{\omega \in \Omega \backslash \!N}
                                     \! \sigma(B(\omega))
                                  } \, .
                     \]

  \item[{\rm (iv)}]  For every function $\vphi\in C(\sigma(M_B); \R)$ we have
                     \[
                       \vphi(M_B) = M_{\vphi(B)},
                     \]
                     where $(\vphi(B))(\omega) := \vphi(B(\omega))$ is well-defined
                     for almost all $\omega\in\Omega$.
\end{enumerate}
\end{lemma-A}

\begin{proof}
Part~(i) follows easily from
\(
  \langle B(\omega)f (\omega), g(\omega) \rangle_{\C^n}
  = \langle f(\omega), B(\omega) g(\omega) \rangle_{\C^n} ,
\)
which holds for almost all $\omega \in \Omega$.

\smallskip{}

For the proofs of both (ii) and (iii), we will use \cite[Proposition 1]{h}, which shows
for any $\lambda \in \C$ that
\begin{equation}
  \la\in\varrho(M_B)
  \Llra \, \exists \, \veps > 0 :
             \mu ( \{\o \in \Omega \colon \sigma(B(\o)) \cap B_\veps(\la) \neq \emptyset \}) = 0
  \, .
  \label{eq:MultiplicationOperatorSpectrumCharacterization}
\end{equation}
To prove (ii), let us assume towards a contradiction that the claim is false;
that is, the set
\[
  \Omega_0
  := \big\{
       \omega \in \Omega
       \colon
       \sigma(B(\omega)) \cap \varrho(M_B) \neq \emptyset
     \big\}
\]
is \emph{not} a null-set.
Since $\varrho(M_B)$ is an open set, we have $\varrho(M_B) = \bigcup_{k \in \N} I_k$
for certain compact sets $I_k \subset \varrho(M_B) \subset \C$.
Setting ${\Omega_k := \{ \omega \in \Omega \colon \sigma(B(\omega)) \cap I_k \neq \emptyset \}}$,
we then have $\Omega_0 = \bigcup_{k \in \N} \Omega_k$, so that there is some $k \in \N$ for which
$\Omega_k$ is not a null-set.
Let us choose a dense subset $\{\lambda_n \colon n \in \N \}$ of $I_k$, and define
\[
  \Omega_{m,n}
  := \big\{
       \omega \in \Omega
       \colon
       \sigma(B(\omega)) \cap B_{1/m} (\lambda_n) \neq \emptyset
     \big\}
  \qquad \text{for } m,n \in \N \, .
\]
By density, we have $I_k \subset \bigcup_{n \in \N} B_{1/m} (\lambda_n)$ for every $m \in \N$,
and hence ${\Omega_k \subset \bigcup_{n \in \N} \Omega_{m,n}}$.
Thus, for each $m \in \N$, there is some $n_m \in \N$ such that $\Omega_{m,n_m}$ is not a null-set.

Since $(\lambda_{n_m})_{m \in \N}$ is a sequence in the compact set $I_k$, there is a subsequence
$(\lambda_{n_{m_\ell}})_{\ell \in \N}$ such that
$\lambda_{n_{m_\ell}} \to \lambda \in I_k \subset \varrho(M_B)$ as $\ell \to \infty$.
By \eqref{eq:MultiplicationOperatorSpectrumCharacterization},
there is some $\veps > 0$ such that
$\Theta := \{ \omega \in \Omega \colon \sigma(B(\omega)) \cap B_\veps (\lambda) \neq \emptyset \}$
is a null-set.
But for $\ell \in \N$ large enough,
we have $\frac{1}{m_\ell} + |\lambda_{n_{m_\ell}} - \lambda| < \veps$, and hence
$B_{1/m_\ell}(\lambda_{n_{m_\ell}}) \subset B_\veps (\lambda)$, which shows that
$\Omega_{m_\ell, n_{m_\ell}} \subset \Theta$ is a null-set.
This is the desired contradiction.


\smallskip{}

To prove (iii) let $\la \in \sigma(M_B)$ and let $N \subset \Omega$ be of zero measure.
If $k \in \N$ is arbitrary, then by \eqref{eq:MultiplicationOperatorSpectrumCharacterization},
the set $\{\o \in \Omega : \sigma(B(\o))\cap B_{1/k}(\la)\neq\emptyset\}$
does not have measure zero, and thus has non-empty intersection with $\Omega \backslash N$.
Hence, we can pick $\o_k \in \Omega \backslash N$ and $\la_k\in\sigma(B(\o_k))$
such that $|\la_k-\la|<1/k$.
This proves the inclusion in (iii).

\smallskip{}

Part~(iv) is obvious for polynomials $\vphi$.
Given general $\vphi\in C(\sigma(M_B); \R)$, we can approximate $\vphi$
uniformly on $\sigma(M_B) \subset \R$ by polynomials $p_n$.
Then ${\vphi(M_B) - p_n(M_B)}$ converges to zero in operator norm, and $p_n(M_B) = M_{p_n(B)}$.
Hence, we see for every $f \in L^2(\Omega;\C^n)$
that $\varphi(M_B) f = \lim_{n \to \infty} M_{p_n(B)} f$.
But by (ii), we have $\sigma(B(\omega)) \subset \sigma(M_B)$ for almost every $\omega \in \Omega$.
For each such $\omega \in \Omega$,
\[
  \|p_n (B(\omega)) - \varphi(B(\omega))\|
  = \|p_n - \varphi\|_{C(\sigma(B(\omega)))}
  \leq \|p_n - \varphi\|_{C(\sigma(M_B))} \,.
\]
Thus, we see
\(
  [M_{p_n(B)} f] (\omega)
  = p_n (B(\omega)) f(\omega) \to \varphi(B(\omega)) f(\omega)
  = [M_{\varphi(B)} f](\omega)
\)
for almost every $\omega$, for every $f \in L^2(\Omega; \C^n)$.
Since also have $M_{p_n(B)} f \to \varphi(M_B) f$ with convergence in $L^2(\Omega,\C^n)$,
this implies $\varphi(M_B) f = M_{\varphi(B)} f$, as claimed.
\end{proof}

\subsection{Operators with closed range and their pseudo-inverse}
\label{sub:PseudoInverseGeneral}

In this subsection, we review the notion of the pseudo-inverse
of an operator with closed range and some of its elementary properties.
All of these properties are well-known in general; yet, as some readers might not be familiar
with them we decided to include the essentials.
Throughout this subsection $\calH$, $\calK$, and $\calL$ denote Hilbert spaces.

\begin{lemma-A}\label{lem:closed_equis}
  Let $A : \calH \to \calK$ be a bounded linear operator. Then
  \begin{equation}\label{e:kerran}
    (\ker A)^\perp = \overline{\ran A^\ast}\,.
  \end{equation}
  Moreover, the following statements are equivalent:
  \begin{enumerate}
    \item[{\rm (a)}] $\ran A$ is closed in $\calK$.

    \item[{\rm (b)}] $\ran(AA^*)$ is closed in $\calK$.

    \item[{\rm (c)}] $\ran(A^*A)$ is closed in $\calH$.

    \item[{\rm (d)}] $\ran A^*$ is closed in $\calH$.

    \item[{\rm (e)}] $\sigma_1(A) > 0$.

    \item[{\rm (f)}] $\sigma_1(A^*) > 0$.
  \end{enumerate}
  In one of these properties holds, then the following identities hold:
  \begin{equation}\label{e:range_identities}
  \ran(AA^*) = \ran A, \quad \ran(A^*A) = \ran A^*,
  \quad\text{and}\quad
  \sigma_1(A) = \sigma_1(A^*).
  \end{equation}
\end{lemma-A}

\begin{proof}
The identity \eqref{e:kerran} is a simple exercise
(see \cite[Theorem~58.2]{HeuserFunktionalanalysis}).

For the equivalence of (a)--(f), we refer to \cite[Theorem~2]{OuldAliClosedRangeOperators}.

Next, if (a)--(f) hold, then Equation~\eqref{e:kerran} shows $(\ker A)^{\perp} = \ran (A^\ast)$.
This implies $\ran A = \ran (A|_{(\ker A)^\perp}) = \ran(A|_{\ran A^\ast}) = \ran(A A^\ast)$,
which proves the first part of Equation~\eqref{e:range_identities}.
The second part follows by applying the first part to $A^\ast$ instead of $A$.

The last identity in \eqref{e:range_identities} follows directly from the definition of $\sigma_1$
and the well-known Jacobson lemma which states that for arbitrary bounded linear operators
$S : \calH \to \calK$ and $T : \calK \to \calH$
we have $\sigma(ST) \backslash \{0\} = \sigma(TS) \backslash \{0\}$.
It can indeed be easily seen that $\la\in\varrho(TS)\backslash\{0\}$ implies $\la\in\varrho(ST)$,
by virtue of the identity
\[
  (ST-\la I_\calK)^{-1}
  = \frac 1\la\left[S(TS-\la I_\calH)^{-1}T - I_\calK\right] \, .
\]
By symmetry, this implies $\varrho(T S) \backslash \{0\} = \varrho(S T) \backslash \{0\}$.
\end{proof}

\begin{lemma-A}\label{lem:BoundedBelowAStarA}
  A bounded operator $A : \calK \to \calH$ is bounded below
  (meaning that there is $c > 0$ with $\|A x\|_\calH \geq c \, \|x\|_\calK$ for all $x \in \calK$)
  if and only if $A^\ast A : \calK \to \calK$ is bounded below.

  Furthermore, a bounded \emph{self-adjoint} operator $T : \calH \to \calH$
  is bounded below if and only if $T$ is boundedly invertible.
\end{lemma-A}

\begin{proof}
  Using the bounded inverse theorem, it is easy to see that a bounded operator $T$ between two
  Hilbert spaces is bounded below if and only if ${\ker T = \{0\}}$ and if $\ran T$ is closed.
  Lemma~\ref{lem:closed_equis} shows that $\ran A$ is closed if and only if $\ran (A^\ast A)$
  is closed.
  Since furthermore $\ker A = \ker(A^*A)$, we obtain the first claim.

  For the second part of the claim, let $T : \calH \to \calH$ be bounded, self-adjoint,
  and bounded below.
  As seen above, this implies that $\ran T$ is closed and that $\ker T = \{0\}$.
  Therefore, Equation~\eqref{e:kerran} shows
  $\calH = (\ker T)^\perp = \overline{\ran T^\ast} = \ran T$.
  Hence, $T : \calH \to \calH$ is bijective, so that the bounded inverse theorem shows
  that $T$ is boundedly invertible.
  It is clear that if $T$ is boundedly invertible, then $T$ is bounded below.
\end{proof}

The next lemma follows directly from \cite[Cor.~5.5.2 and Cor.~5.5.3]{ChristensenIntroductionToFrames}.

\begin{lemma-A}\label{lem:FrameSequenceIffGramianHasClosedRange}
  A Bessel sequence $(\varphi_i)_{i \in I}$ in a Hilbert space $\calH$ is a \emph{frame sequence}
  if and only if its \emph{analysis operator}
  \(
    \mathbf{A} : \calH \to \ell^2(I), f \mapsto \big( \langle f, \varphi_i \rangle \big)_{i \in I}
  \)
  has closed range.
\end{lemma-A}

%

Let $A : \calH \to \calK$ be a bounded linear operator with closed range.
Then the operator
\begin{equation}\label{eq:PseudoInverseBuildingBlock}
  A_0 : (\ker A)^\perp \to \ran A,\qquad  x \mapsto A \, x,
\end{equation}
is boundedly invertible by the bounded inverse theorem.
Hence, the \emph{pseudo-inverse}
\[
  \pseudo{A} := \iota_{(\ker A)^{\perp}} \circ A_0^{-1} \circ \, P_{\ran A}
\]
of $A$ defines a bounded linear operator from $\calK$ to $\calH$.
Here, $\iota_{(\ker A)^{\perp}}$ is the inclusion map
$(\ker A)^{\perp} \to \calH$, $x\mapsto x$.

In the following lemma we list some of the properties of the pseudo-inverse.

\begin{lemma-A}\label{lem:TechnicalPseudoInverseIdentities}
Let $A : \calH \to \calK$ be a bounded linear operator with closed range.
Then the following hold:
\begin{enumerate}
  \item[(i)] $\pseudo{A} A = P_{(\ker A)^\perp}$.

  \item[(ii)] $A\pseudo{A} = P_{\ran A}$.

  \item[(iii)] $(A^\dagger)^* = (A^*)^\dagger$.

  \item[(iv)] $\pseudo{(A^\ast A)} A^\ast = \pseudo{A} = A^\ast \pseudo{(A A^\ast)}$.
\end{enumerate}
\end{lemma-A}

\begin{proof}
Properties~(i)--(iii) can be found in
\cite[Lemma~2.5.2]{ChristensenIntroductionToFrames}.

For the first identity in (iv),
we refer to \cite[Theorem~1]{DesoerPseudoInverses}.
The remaining identity follows from the first one and (iii) by applying
the first part of (iv) on the right-hand side
of the identity $\pseudo{A} = \bigl(\pseudo{(A^\ast)}\bigr)^\ast$.
\end{proof}

\begin{lemma-A}\label{l:vphi}
Let $A : \calH \to \calH$ be a self-adjoint operator with closed range and set $c := \sigma_1(A)$.
Then $\sigma(A)\subset\{0\}\cup(\R \backslash (-c,c))$ and $A^\dagger = \vphi(A)$,
where $\vphi : \R\to\R$ is defined by $\vphi(t) = \tfrac 1 t$ for $t\neq 0$ and $\vphi(0) = 0$.
\end{lemma-A}

\begin{rem*}
  Since $0$ is an isolated point of $\sigma(A) \subset \{0\} \cup \big( \R \backslash (-c,c) \big)$,
  $\varphi|_{\sigma(A)}$ is continuous.
\end{rem*}

\begin{proof}
  Lemma~\ref{lem:closed_equis} shows $c = \sigma_1 (A) > 0$.    
  By definition of $\sigma_1 (A)$ 
  (see Equation~\eqref{eq:SigmaDefinitions}), we thus see that $A^2 = A^\ast A$ satisfies
  $\sigma(A^2) \subset \{0\} \cup [c^2, \infty)$.
  As $\sigma(A^2) = \{\la^2 : \la\in\sigma(A)\}$ and since $\sigma(A) \subset \R$
  because of $A^\ast = A$, it follows that $\sigma(A) \subset \{0\} \cup (\R \backslash (-c,c))$.
  In particular, this entails that $\varphi|_{\sigma(A)}$ is continuous.

  To prove $\pseudo{A} = \varphi(A)$, define $\psi := \indicator_{\{ 0 \}}$ and note
  $\psi \in C(\sigma(A); \R)$ since $0$ is an isolated point of $\sigma(A)$
  (or even $0 \notin \sigma(A)$).
  Since $\psi^2 = \psi$, we see that $P := \psi(A)$ satisfies $P^2 = P = P^\ast$,
  so that $P = P_V$ is the orthogonal projection onto a closed subspace $V \subset \calH$.
  For $x \in \ker A$ we have $A x = 0 x$, so that \mbox{\cite[Theorem~VII.1(d)]{rsi}} shows
  $P x = \psi(A)x = \psi(0) x = x$; hence, $\ker A \subset V$.
  Conversely, we have $\id_{\sigma(A)} \cdot \psi \equiv 0$
  and hence $0 = (\id_{\sigma(A)} \cdot \psi)(A) = A P$,
  which shows $V = \ran P \subset \ker A$ and hence $V = \ker A$.

  Next, observe that $\varphi \cdot \id_{\sigma(A)} = 1 - \psi$, whence
  ${\varphi(A) A = \id_{\calH} - P = P_{V^\perp} = \pseudo{A} A}$,
  where the last step used Lemma~\ref{lem:TechnicalPseudoInverseIdentities}(a).
  Hence, $\varphi(A) = \pseudo{A}$ on $\ran A$.
  Finally, we have $\varphi(A) P_V = (\varphi \cdot \psi)(A) = 0$,
  meaning $\varphi(A) = 0 = \pseudo{A}$ on ${V = \ker A = (\ran A)^\perp}$.
  Overall, this shows $\varphi(A) = \pseudo{A}$, as claimed.
\end{proof}

\begin{corollary-A}\label{cor:PseudoInverseConjugation}
  Let $A : \calH \to \calH$ be a bounded, self-adjoint operator with closed range,
  and let $U : \calK \to \calH$ be unitary.
  Then $U^\ast A U : \calK \to \calK$ is also bounded and self-adjoint with closed range,
  and we have $\pseudo{(U^\ast A U)} = U^\ast \pseudo{A} U$.
\end{corollary-A}

\begin{proof}
  It is clear that $U^\ast A U$ is bounded and self-adjoint with closed range.
  Furthermore, a direct calculation shows $p(U^\ast A U) = U^\ast p(A) U$
  for every polynomial $p \in \R [x]$.
  By definition of the continuous spectral calculus, we thus get
  $\varphi(U^\ast A U) = U^\ast \varphi(A) U$ for all $\varphi \in C(\sigma(A); \R)$,
  where we note $\sigma(A) = \sigma(U^\ast A U)$.
  Now, the claim follows from Lemma~\ref{l:vphi}.
\end{proof}

\subsection{Some properties of Sobolev functions}
\label{sub:FatH2MixedOperators}

\subsubsection{Essentially bounded \braces{matrix-valued} Sobolev functions}

Our main objective in this subsection is to prove that the space of
matrix-valued functions with all entries in $H^1(\Omega)\cap L^\infty(\Omega)$
is stable under matrix multiplication and inversion.
For this, the following lemma will be crucial.

\begin{lemma-A}[{\cite[Cor.~2.7]{ak}}]\label{lem:SobolevChainRule}
  Let $\Omega\subset\R^n$ be open and let $\gamma : \C\to\C$ be a Lipschitz continuous map.
  In case of $\Lebesgue(\Omega) = \infty$, assume additionally that $\gamma(0) = 0$.
  If $f\in H^1(\Omega)$, then $\gamma\circ f\in H^1(\Omega)$.
\end{lemma-A}

\begin{lemma-A}\label{lem:SobolevAlgebra}
Let $\Omega\subset\R^n$ be open and let $f,g\in H^1(\Omega)\cap L^\infty(\Omega)$.
Then:
\begin{enumerate}
  \item[{\rm (a)}] $f \cdot g \in H^1(\Omega)\cap L^\infty(\Omega)$.

  \item[{\rm (b)}] If $\essinf |f| > 0$, then also $1/f\in H^1(\Omega)\cap L^\infty(\Omega)$.
\end{enumerate}
\end{lemma-A}

\begin{proof}
(a) Clearly, $fg\in L^2(\Omega)\cap L^\infty(\Omega)$.
Further, \cite[Section~4.25]{AltLinearFunctionalAnalysis} shows that the weak derivatives
of $fg$ exist and satisfy
\[
  \partial_j(fg)
  = (\partial_j f) \cdot g + f\cdot(\partial_j g)
  \quad \text{for } j \in \{ 1,\dots,n \}.
\]
As $\partial_jf,\partial_jg\in L^2(\Omega)$ and $f,g\in L^\infty(\Omega)$
it follows that $\partial_j(fg)\in L^2(\Omega)$.

\smallskip{}

(b) Let $r := \essinf |f| > 0$.
We trivially have $1/f \in L^\infty (\Omega)$.
Note that $\Lebesgue(\Omega) < \infty$ since $f \in H^1(\Omega) \subset L^2(\Omega)$
and $|f(x)| \geq r > 0$ almost everywhere.
Let $B := \{ z \in \C \colon |z| < r \}$ and define
$\gamma_0 : \C \setminus B \to \C, z \mapsto z^{-1}$.
Then $\gamma_0$ is well-defined and Lipschitz continuous,
since $|z^{-1} - w^{-1}| = \big| \frac{w - z}{z w} \big| \leq r^{-2} |w - z|$
for $z,w \in \C \setminus B$.
Now, \cite[Theorem~1 in Section~3.1.1]{EvansGariepy} shows that there exists a Lipschitz continuous
extension $\gamma : \C \to \C$ of $\gamma_0$.
Since $|f(x)| \geq r$ almost everywhere, we have $\gamma \circ f = \gamma_0 \circ f = 1/f$
almost everywhere, and Lemma~\ref{lem:SobolevChainRule} shows
$1/f = \gamma \circ f \in H^1(\Omega)$.
\end{proof}

In the following we denote by $H^1(\Omega; \C^{k{\times}\ell})$
the space of all matrix-valued functions ${A : \Omega\to\C^{k{\times}\ell}}$
for which each component function is in $H^1(\Omega)$.
We similarly define $L^p(\Omega;\C^{k{\times}\ell})$ for $p \in [1, \infty]$.

\begin{lemma-A}\label{lem:SobolevMatrices}
  Let $\Omega\subset\R^n$ be open and let
  $A\in H^1(\Omega;\C^{k{\times}\ell})\cap L^\infty(\Omega;\C^{k{\times}\ell})$
  and $B\in H^1(\Omega;\C^{\ell{\times} m})\cap L^\infty(\Omega;\C^{\ell{\times} m})$.
  Then the following statements hold:
  \begin{enumerate}
    \item[{\rm (a)}] $AB\in H^1(\Omega;\C^{k{\times} m})\cap L^\infty(\Omega;\C^{k{\times} m})$.
                     \vspace*{0.1cm}

    \item[{\rm (b)}] If $k=\ell$ and ${\displaystyle \essinf_{x\in\Omega}}\sigma_0(A(x)) > 0$, then
                     $A^{-1}\in H^1(\Omega;\C^{k{\times} k})\cap L^\infty(\Omega;\C^{k{\times} k})$.

    \item[{\rm (c)}] If $\displaystyle \essinf_{x\in\Omega}\sigma_0(B(x)) > 0$, then
                     $B^\dagger\in H^1(\Omega;\C^{m{\times}\ell})\cap L^\infty(\Omega;\C^{m{\times}\ell})$.
  \end{enumerate}
\end{lemma-A}

\begin{proof}
Statement (a) follows from Lemma \ref{lem:SobolevAlgebra} (a),
since $(A B)_{j, n} = \sum_t A_{j,t} B_{t, n}$.
For (b) we first observe that Leibniz's formula
\[
  \det A(x)
  = \sum_{\sigma \in S_k}
      \Big[
       \mathrm{sign}(\sigma)
       \prod_{j=1}^k A_{\sigma(j), j}(x)
      \Big]
\]
and Lemma \ref{lem:SobolevAlgebra} (a) yield $\det A\in H^1(\Omega)\cap L^\infty(\Omega)$.
Now, the condition on $A$ implies that $A(x)$ is invertible
for a.e.\ $x\in\Omega$ so that $A(x)^{-1}$ indeed exists for a.e.\ $x\in\Omega$.
Moreover, for a.e.\ $x\in\Omega$, for the smallest eigenvalue $\la(x)$ of $A(x)^*A(x)$
we have that $\la(x)\ge c:= \essinf (\sigma_0(A))^2 > 0$.
Therefore, we conclude that
\[
  |\det A(x)|^2
  = \det(A(x)^*A(x))
  \ge \la(x)^k
  \ge c^k
\]
for a.e.\ $x\in\Omega$.
Hence, Lemma~\ref{lem:SobolevAlgebra} (b) shows that
$(\det A)^{-1}\in H^1(\Omega) \cap L^\infty (\Omega)$.
Also,
\[
  \|A(x)^{-1}\|^2
  = \|[A(x)^*A(x)]^{-1}\|
  \le \frac{1}{c}
\]
for a.e.\ $x\in\Omega$ implies that $A^{-1}\in L^\infty(\Omega;\C^{k{\times} k})$.
Finally, $A^{-1}\in H^1(\Omega;\C^{k{\times} k})$ follows from Lemma~\ref{lem:SobolevAlgebra} (a),
combined with the so-called \emph{cofactor formula} for the inverse of a matrix
(see for instance \cite[Equations (5-22) and (5-23)]{HoffmanKunzeLinearAlgebra}).
It states for $A \in \C^{k {\times} k}$ with $k > 1$ and $\det A \neq 0$ that
\[
  A^{-1} = \frac{\mathrm{adj} A}{\det A}
  \qquad \text{with} \quad
  (\mathrm{adj} A)_{i,j} = (-1)^{i+j} \cdot \det A^{(j,i)} \, ,
\]
where $A^{(j,i)}$ is obtained from $A$ by deleting its $j$-th row
and its $i$-th column.
In the remaining case $k = 1$, we have
$A^{-1} = (\det A)^{-1} \in H^1 (\Omega) \cap L^\infty (\Omega)$ as well.

The condition on $B$ in (c) implies that $B(x)^*B(x)$ is invertible for a.e.\ $x\in\Omega$
with $\essinf_{x\in\Omega}\sigma_0(B(x)^*B(x)) = \essinf_{x\in\Omega}\sigma_0(B(x))^2 > 0$.
The claim now follows from (a), (b), and the identity $B(x)^\dagger = (B(x)^*B(x))^{-1}B(x)^*$
(see Part~(iv) of Lemma~\ref{lem:TechnicalPseudoInverseIdentities}).
\end{proof}

\subsubsection{A certain property of the space \texorpdfstring{$\HH^2 (\R)$}{H²(ℝ)}}

\begin{lemma-A}\label{lem:H2FunctionsNonSymmetricFourierOperators}
If $g \in \HH^2 (\R)$, then $Xg' \in L^2 (\R)$ with the estimate
  \begin{align*}
    \|X \, g'\|_{L^2}
    & \leq 45 \cdot \big( \|g''\|_{L^2}^2 + \|X^2 \, g\|_{L^2}^2 + \|g'\|_{L^2}^2 \big)^{1/2} \\
    & \leq 45 \cdot \big(
                    (1 + 4 \pi^2) \, \|g''\|_{L^2}^2
                    + \|X^2 \, g\|_{L^2}^2
                    + 4 \pi^2 \, \|g\|_{L^2}^2
                  \big)^{1/2} \, .
  \end{align*}
\end{lemma-A}

\begin{proof}
  It follows from \cite[Lemma~5.4]{AdamsSobolevSpaces} that for any $\eta > 0$
  and $f \in C^2 ([0,\eta])$,
  \[
    |f'(0)|^2
    \leq \frac{C}{\eta}
         \cdot \left(
                 \eta^2 \cdot \int_0^\eta |f''(t)|^2 \, dt
                 + \eta^{-2} \cdot \int_0^\eta |f(t)|^2 \, dt
               \right) \, ,
  \]
  where $C := 2 \cdot 9^2$.
  One can see that this remains true for $f \in H^2 \big( (0,\eta) \big)$,
  by a density argument since $H^2 \big( (0,\eta) \big) \hookrightarrow C^1 ( [0,\eta])$
  (see for instance \cite[Thm.\ 4.12, Part II]{AdamsSobolevSpaces}).

  Given $g \in \HH^2 (\R)$ and $x \in [1,\infty)$,
  we can apply the above estimate to the function $t \mapsto g(x+t)$ to obtain
  \[
    |g'(x)|^2
    \leq C \cdot \fint_0^\eta
                   \big(\eta^2 \cdot |g''(x+t)|^2
                   + \eta^{-2} \cdot |g(x+t)|^2\big)
                 \, d t \, ,
  \]
  where we denote by $\fint_\Omega f(x) \, dx = \frac{1}{\Lebesgue(\Omega)} \int_\Omega f(x) \, dx$
  the \emph{average} of $f$ over $\Omega$, with $\Lebesgue(\Omega)$ denoting the Lebesgue measure
  of $\Omega$.

  Now, fix $n \in \N_0$ for the moment, and let $x \in [2^n, 2^{n+1})$.
  If we set $\eta = 2^{-n}$, then $\frac{2^n}{\eta} = 2^{2n} \leq x^2 \leq (x+t)^2$
  for all $t \in [0,\eta]$.
  Therefore,
  \begin{equation}
    |x \cdot g'(x)|^2
    \leq 4 \cdot 2^{2n} \cdot |g'(x)|^2
    \leq 4C \fint_0^{2^{-n}} \!\!\!\!
              \big(
                |g''(x+t)|^2
                + |(x+t)^2 \cdot g(x+t)|^2
              \big)
            \, d t \, .
    \label{eq:HigherDerivativeInterpolation}
  \end{equation}
  For brevity, set $F(y) := |g''(y)|^2 + |y^2 \cdot g(y)|^2$.
  Then, for any $t \in [0, 2^{-n}] \subset [0, 1]$, we have $2^{n+1} + t \leq 2^{n+2}$ and hence
  $\int_{2^n}^{2^{n+1}} F(x+t) \, d x \leq \int_{2^n}^{2^{n+2}} F(y) \, d y$.
  By combining this observation with the trivial estimate
  $\fint_\Omega G(t) \, d t \leq \|G\|_{L^\infty (\Omega)}$, and by integrating
  Equation~\eqref{eq:HigherDerivativeInterpolation} over $x \in [2^n, 2^{n+1})$, we arrive at
  \[
    \int_{2^n}^{2^{n+1}}
      |x \cdot g'(x)|^2
    \, d x
    \leq 4C \cdot \fint_0^{2^{-n}}
                    \int_{2^n}^{2^{n+1}} F(x+t) \, d x
                  \, d t
    \leq 4C \cdot \int_{2^n}^{2^{n+2}} F(y) \, d y \, .
  \]
  Summing over $n \in \N_0$, we conclude that
  \begin{equation}
  \begin{split}
    \int_1^\infty |x \cdot g'(x)|^2 \, d x
    & = \sum_{n=0}^\infty
          \int_{2^n}^{2^{n+1}}
            |x \cdot g'(x)|^2
          \, d x \\
    & \leq 4C \cdot \int_1^\infty
                      F(y)
                      \cdot \sum_{n=0}^\infty
                              \indicator_{(2^n, 2^{n+2})} (y)
                    \, d y \\
    & \leq 12C \cdot \big(
                       \|g''\|_{L^2 ( (1,\infty) )}^2
                       + \|X^2 \, g\|_{L^2 ( (1,\infty) )}^2
                     \big) \, .
  \end{split}
  \label{eq:NonSymmetricOperatorPositiveAxis}
  \end{equation}
  Here the last step used that $\sum_{n=0}^\infty \indicator_{(2^n, 2^{n+2})} (y) \leq 3$;
  indeed, if ${2^n < y < 2^{n+2}}$, then each $k \in \Z$ for which also $2^k < y < 2^{k+2}$
  satisfies $2^n < 2^{k+2}$ and $2^k < 2^{n+2}$, so that $k \in \{ n-1, n, n+1 \}$.

  By applying estimate~\eqref{eq:NonSymmetricOperatorPositiveAxis} to
  $h : \R \to \C, x \mapsto g(-x)$ instead of $g$, we easily get
  \(
    \int_{-\infty}^{-1} |x \cdot g'(x)|^2 \, d x
    \leq 12 C \cdot \big(
                      \|g''\|_{L^2 ( (-\infty,-1) )}^2
                      + \|X^2 \, g\|_{L^2 ( (-\infty,-1) )}^2
                    \big)
  \).
  Adding this to \eqref{eq:NonSymmetricOperatorPositiveAxis} and using the trivial
  estimate $\int_{-1}^1 |x \cdot g'(x)|^2 \, dx \leq \|g'\|_{L^2}^2$, we finally arrive at
  \[
    \int_{\R} |x \cdot g'(x)|^2 \, d x
    \leq \|g'\|_{L^2}^2 + 12C \cdot \big( \|g''\|_{L^2}^2 + \|X^2 \, g\|_{L^2}^2 \big) \, .
  \]
  This easily implies the first part of the stated estimate.

  For the last part, recall that $\Fourier [g'] (\xi) = 2\pi i \xi \, \widehat{g}(\xi)$
  and $\Fourier [g''](\xi) \!=\! (2\pi i \xi)^2 \, \widehat{g} (\xi)$.
  Thanks to Plancherel's theorem and the elementary estimate
  ${|\xi|^2 \leq 1 + |\xi|^4}$, we thus see
  \begin{align*}
    \|g'\|_{L^2}^2
    & = \int_{\R}
          |2 \pi \xi \cdot \widehat{g}(\xi)|^2
        \, d \xi
      \leq (2 \pi)^2 \cdot
           \int_{\R}
             |\widehat{g} (\xi)|^2
             + |(2\pi i \xi)^2 \, \widehat{g} (\xi)|^2
           \, d \xi \\
    & =    (2 \pi)^2 \cdot \big( \|g\|_{L^2}^2 + \|g''\|_{L^2}^2 \big) \, .
  \end{align*}
  Together with the first part of the lemma, this implies the second part.
\end{proof}

\subsubsection{Sobolev functions on slices and the AC-property}
Let $A \subset \R^n$ be Borel measurable, where $n > 1$.
For $i \in \{1, \ldots, n\}$ and $x \in \R^{n-1}$ we define the following
Borel measurable subset of $\R$:
\[
  A_{i,x}
  = \{t\in\R : (x_1,\ldots,x_{i-1},t,x_{i},\ldots,x_{n-1})\in A\} \, .
\]
Note that $A_{i,x}$ is open if $A$ is so.
The following lemma is an easy consequence of Fubini's theorem.

\begin{lemma-A}\label{l:andrei}
  A Borel set $N \subset \R^n$ has measure zero if and only if for some (and then all)
  $i \in \{1,\ldots,n\}$ and a.e.\ $x \in \R^{n-1}$ the set $N_{i,x}$ has measure zero in $\R$.
\end{lemma-A}

We say that a function $h : U \to \C$, where $U \subset \R$ is open,
is \emph{locally absolutely continuous (LAC)} on $U$
if it is LAC on each connected component of $U$;
this is equivalent to $h$ being LAC on each open subinterval of $U$.
Here, a function $f : I \to \C$ with an open interval $I \subset \R$ is called
locally absolutely continuous if there is a function $g \in L^1_{\mathrm{loc}} (I)$ such that
$f(x) - f(y) = \int_y^x g(t)\,dt$ for all $x,y \in I$.
In particular, each LAC function is continuous.

\begin{definition-A}
  Let $\Omega\subset\R^n$ be an open set.
  A (pointwise defined) function ${f : \Omega\to\C}$ is said to have the {\em AC-property}
  \braces{on $\Omega$},
  if for each $i\in\{1,\ldots,n\}$ and almost all $x\in\R^{n-1}$ the function
  \[
    f_{i,x} : \Omega_{i,x}\to\C,\quad t\mapsto f(x_1,\ldots,x_{i-1},t,x_{i},\ldots,x_{n-1})
  \]
  is LAC on $\Omega_{i,x}$.
\end{definition-A}

Note that the classical partial derivatives $\partial_i f$ of a function $f : \Omega\to\C$
having the AC-property exist a.e.~on $\Omega$ by \cite[Theorem 3.35]{FollandRA} and
Lemma~\ref{l:andrei}.

\begin{lemma-A}\label{l:slices}
  Let $\Omega\subset\R^n$ be open and let $f\in W^{1,1}_{\rm loc}(\Omega)$.
  Then there is a representative $g : \Omega\to\C$ of $f$ which has the AC-property on $\Omega$.
  In particular, we have $\partial_i g = D_i f$ a.e.\ on $\Omega$, $i=1,\ldots,n$.
  Here, $D_i f$ denotes the weak derivative of $f$.
\end{lemma-A}

\begin{proof}
Let $\Omega^{(0)} := \emptyset$ and
$\Omega^{(k)} := \{x \in \Omega : \dist(x,\Omega^c) > 1/k\} \cap (-k,k)^n$, $k\in\N$.
Then each $\Omega^{(k)}$ is open in $\R^n$, $\ol{\Omega^{(k)}} \subset \Omega$ is compact,
$\Omega^{(k)} \subset \Omega^{(k+1)}$, and $\bigcup_k \Omega^{(k)} = \Omega$.
By \cite[Thm.~1.41]{mz}, for each $k \in \N$ there exists a representative $f^{(k)}$ of $f$
which has the AC-property on $\Omega^{(k)}$.
It is clear that the function $g : \Omega\to\C$,
\[
  g
  := \sum_{k=1}^\infty
       \one_{\Omega^{(k)} \backslash \Omega^{(k-1)}} \cdot f^{(k)}
\]
is a representative of $f$.
Let us show that it has the AC-property on $\Omega$.

First of all, for each $k \in \N$ and $i \in \{1, \ldots, n\}$
there exists a set $L^{(k)}_i\subset\R^{n-1}$ of measure zero such that
$f^{(k)}_{i,x}$ is LAC on $\Omega^{(k)}_{i,x}$ for every $x \in \R^{n-1} \backslash L^{(k)}_i$.
Let $L := \bigcup_{i,k}L_i^{(k)}$.

Fix $k\in\N$.
Then $f^{(k+1)} = f^{(k)}$ a.e.\ on $\Omega^{(k)}$.
In particular, by Lemma~\ref{l:andrei}, for each $i\in\{1,\ldots,n\}$ there exists a set
$M_i^{(k)}\subset\R^{n-1}$ of measure zero such that for all $x\in \R^{n-1} \backslash M_i^{(k)}$
we have that $f^{(k+1)}_{i,x} = f^{(k)}_{i,x}$ a.e.\ on $\Omega^{(k)}_{i,x}$.
Let $M := \bigcup_{i,k}M_i^{(k)}$.

Let $N := L\cup M\subset\R^{n-1}$.
Then $N$ is a null-set, and for each $i\in\{1,\ldots,n\}$, each $k\in\N$,
and each $x \in \R^{n-1} \backslash N$ we have that $f_{i,x}^{(k)}$ is LAC on $\Omega_{i,x}^{(k)}$
and $f^{(k+1)}_{i,x} = f^{(k)}_{i,x}$ on $\Omega^{(k)}_{i,x}$;
indeed, since $f_{i,x}^{(k+1)} = f_{i,x}^{(k)}$ almost everywhere on $\Omega_{i,x}^{(k)}$,
and since both functions are continuous on the open set $\Omega_{i,x}^{(k)}$,
they agree everywhere on $\Omega_{i,x}^{(k)}$.
Now, if $i\in\{1,\ldots,n\}$, $x \in \R^{n-1} \backslash N$,
and if $K \subset \Omega_{i,x}$ is compact,
then $\bigcup_k \Omega_{i,x}^{(k)}$ is an open cover of $K$.
Thus, there is some $k = k(i, x, K) \in \N$ such that
$K \subset \Omega_{i,x}^{(k)}$ and $g_{i,x} = f_{i,x}^{(k)}$ on $K$.
Therefore, $g_{i,x}$ is LAC on $\Omega_{i,x}$.

\medskip{}

For the ``in particular''-part, it suffices to prove that $\partial_i g = D_i f$ almost everywhere
on every open rectangular cell $R = \prod_{j=1}^n (a_j,b_j)$ satisfying $\overline{R} \subset \Omega$.
To see this, set ${R_i := \prod_{j\neq i} (a_j,b_j) \subset \R^{n-1}}$,
and observe that for any $\varphi \in C_c^\infty(R)$ we have
\begin{align*}
  & \int_R
      (\partial_i g) \cdot \varphi
    \,dy
    + \int_R
        g \cdot (\partial_i\varphi)
      \,dx \\
  & = \int_{R_i}
        \int_{a_i}^{b_i}
          g_{i,x}'(t) \varphi_{i,x}(t)
        \,dt
      \,dx
    + \int_{R_i}
        \int_{a_i}^{b_i}
          g_{i,x}(t) \varphi_{i,x}'(t)
        \,dt
      \,dx
  = 0 \, .
\end{align*}
Hence, $\int_R(\partial_i g - D_if)\varphi\,dx = \int_R(f - g)\partial_i\varphi\,dx = 0$
for every $\varphi\in C_c^\infty(R)$.
The claim thus follows from the fundamental lemma of the calculus of variations
(see for instance \cite[Section~4.22]{AltLinearFunctionalAnalysis}).
\end{proof}

We close with this subsection with a result that generalizes Lemma~\ref{l:andrei}
in the case $n=2$ to sections of $\R^2$ that are not necessarily parallel to the coordinate axes.

\begin{lemma-A}\label{lem:FubiniGeneralDirection}
  Let $N \subset \R^2$ be a null-set, and let $(a,b) \in \R^2 \backslash \{0\}$.
  Then there is a null-set $N_0 \subset \R^2$ such that
  for all $(x,\omega) \in \R^2 \backslash N_0$, we have
  \[
    (x + t a, \omega + t b) \in \R^2 \backslash N
    \quad \text{for almost all } t \in \R .
  \]
\end{lemma-A}

\begin{rem*}
  The set of $t \in \R$ for which $(x + t a, \omega + t b) \in \R^2 \backslash N$
  depends on $(x,\omega)$.
\end{rem*}

\begin{proof}
  Set $\theta := (a,b) \in \R^2 \backslash \{0\}$,
  and choose $\varrho \in \R^2 \backslash \{0\}$ with $\varrho \perp \theta$.
  Let us define $T : \R^2 \to \R^2, (t,s) \mapsto t \theta + s \varrho$.
  Note that $T$ is linear and bijective, so that the same holds of $T^{-1}$.
  In particular, $T$ and $T^{-1}$ are Lipschitz continuous, and thus map null-sets to null-sets.

  \smallskip{}

  Let $\widetilde{N} := T^{-1} N \subset \R^2$.
  By Lemma~\ref{l:andrei}, there is a null-set $\widetilde{N}_1 \subset \R$ such that for all
  $s \in \R \backslash \widetilde{N}_1$, the set
  $\widetilde{N}_{1,s} = \{ t \in \R \colon (t,s) \in \widetilde{N} \}$ is a null-set.
  Let $N_0 := T (\R {\times} \widetilde{N}_1)$, and note that $N_0 \subset \R^2$
  is indeed a null-set.

  We claim that if $(x,\omega) \in \R^2 \backslash N_0$,
  then $(x + t a, \omega + t b) \in \R^2 \backslash N$ for almost all $t \in \R$.
  To see this, let $(x,\omega) \in \R^2 \backslash N_0$.
  This implies ${(x,\omega) = T (t_0, s_0)}$ for certain
  $(t_0, s_0) \in \R {\times} (\R \backslash \widetilde{N}_1)$,
  so that $\widetilde{N}_{1, s_0}$ is a null-set.
  Finally, if ${t \in \R \backslash (\widetilde{N}_{1, s_0} - t_0)}$
  (which holds for almost all $t \in \R$),
  then $t + t_0 \notin \widetilde{N}_{1, s_0}$, which means
  that ${(t + t_0, s_0) \notin \widetilde{N} = T^{-1} N}$, and hence
  $(x + t a, \omega + t b) = T (t + t_0, s_0) \in \R^2 \backslash N$, as claimed.
\end{proof}

\subsection{Invariance properties of Gabor spaces}
\label{sub:GaborSpaceInvariance}


\begin{lemma-A}\label{lem:GaborSpaceInvariance}
  Let $g \in L^2(\R)$ and let $\Lambda \subset \R^2$ be a lattice.
  Define $\calG := \calG(g,\Lambda)$
  Then $\pi(\lambda) \calG \subset \calG$ for all $\lambda \in \Lambda$.
\end{lemma-A}

\begin{proof}
For $\lambda,\lambda' \in \Lambda$, there exists a unimodular constant $c = c(\lambda,\lambda') \in \C$
satisfying $\pi(\lambda) \pi(\lambda') = c \, \pi(\lambda + \lambda')$.
Hence, $\pi(\lambda) [\pi(\lambda') g] \in \calG$.
Since $\calG$ is spanned by the elements $\pi(\lambda') g$, $\lambda' \in \Lambda$,
this shows $\pi(\lambda) \subset \calG$ for all $\lambda \in \Lambda$.
\end{proof}

\subsection{Failure of the main result for general elements of \texorpdfstring{$\calG(g,\Lambda)$}{𝓖(g,Λ)}}

We close this paper with an example showing that the relation
\begin{equation}
  \dist \big( \pi(u,\eta) f, \mathcal{G}(g,\Lambda) \big)
  \asymp \dist ( (u,\eta), \Lambda) \cdot \|f\|_{L^2},
  \label{eq:GeneralizedQuantitativeBalianLow}
\end{equation}
which holds for $f = g$, does \emph{not} extend to general $f \in \mathcal{G}(g,\Lambda)$.
The example is constructed based on a footnote in \cite{cmp}.

\begin{example-A}\label{exa:DistanceOnlyForGenerator}
  Let $\varphi : \R \to \R, x \mapsto e^{- \pi x^2}$ denote the Gaussian.
  We will repeatedly make use of the following two facts:
  First, \cite[Theorem 7.5.3]{g} shows that if $\alpha, \beta > 0$, then
  $(\varphi, \alpha \Z {\times} \beta \Z)$ is a frame for $L^2(\R)$ if and only if $\alpha \beta < 1$.
  By Ron-Shen duality (see \cite[Theorem 7.4.3]{g}), this implies that
  $(\varphi, \alpha \Z {\times} \beta \Z)$ is a Riesz sequence
  (a Riesz basis for its closed linear span) if and only if $\alpha \beta > 1$.

  Set $\Lambda := 2 \Z {\times} \tfrac{2}{3} \Z$ and
  $\Lambda_0 := \Lambda \cup ( (1,0) + \Lambda ) = \Z {\times} \tfrac{2}{3} \Z$.
  Then $(\varphi,\Lambda_0)$ is a frame for $L^2(\R)$
  but \emph{not} a Riesz sequence.
  Thus, the synthesis operator
  \[
    T : \quad
    \ell^2(\Z^2) \to L^2(\R) , \quad
    (c_{k,\ell})_{k,\ell \in \Z} \mapsto \sum_{k,\ell \in \Z}
                                           c_{k,\ell} \,\, \pi(k, \tfrac{2}{3} \ell) \varphi
  \]
  is surjective, but \emph{not} injective, since otherwise the bounded inverse theorem would imply
  that $T$ is boundedly invertible, meaning that $(\varphi, \Lambda_0)$ is a Riesz basis for $L^2(\R)$.
  In other words, there exist $\ell^2$ sequences $c = ( c_{m,n} )_{m,n \in \Z}$
  and $d = ( d_{m,n} )_{m,n \in \Z}$
  with $(c,d) \neq 0$ and
\begin{align*}
    \sum_{m,n \in \Z}
      c_{m,n} \, \pi(2m, \tfrac{2}{3} n) \varphi
    &= \sum_{m,n \in \Z}
        d_{m,n} \, \pi(2m + 1, \tfrac{2}{3} n) \varphi \\
    &= \pi(1,0)
      \bigg[
        \sum_{m,n \in \Z}
          \widetilde{d}_{m,n} \, \pi(2m, \tfrac{2}{3} n) \varphi
      \bigg],
\end{align*}
  where $\widetilde{d}_{m,n} := e^{\frac{4}{3} \pi i n} d_{m,n}$ for $m,n \in \Z$.
  Then
  \(
    f := \sum_{m,n \in \Z}
           \widetilde{d}_{m,n} \, \pi(2m, \tfrac{2}{3} n) \varphi
  \)
  satisfies $f \in \mathcal{G}(\varphi, \Lambda)$ and $\pi(1,0) f \in \mathcal{G}(\varphi, \Lambda)$.
  Now, once we show that $f \neq 0$, we will have disproved \eqref{eq:GeneralizedQuantitativeBalianLow}.

  To see that $f \neq 0$, we note that $(\varphi, \Lambda)$ is a Riesz sequence.
  If $f = 0$, we would have $\widetilde{d} = 0$ and therefore $d = 0$.
  In turn, the above identity gives
  ${0 = \sum_{m,n \in \Z} c_{m,n} \pi(2m, \tfrac{2}{3}n) \varphi}$, whence $c = 0$,
  again since $(\varphi, \Lambda)$ is a Riesz sequence.
  Therefore, $f = 0$ implies $(c,d) = 0$ which is a contradiction.
\end{example-A}

\renewcommand{\thetheorem}{\arabic{theorem}}

\section*{Acknowledgments}
D.G.~Lee acknowledges support by the DFG Grants PF 450/6-1 and PF 450/9-1.
A.~Caragea acknowledges support by the DFG Grant PF 450/11-1.
The authors would like to thank G\"otz E.~Pfander and Peter Jung
for fruitful discussions.
The authors thank the editor and anonymous reviewers for their valuable comments
and suggestions which improved the paper.





\end{document}